\numberwithin{equation}{section}
\theoremstyle{plain}
\newtheorem{lemma}{Lemma}[section]
\newtheorem{theorem}[lemma]{Theorem}
\newtheorem{prop}[lemma]{Proposition}
\newtheorem{problem}[lemma]{Problem}
\theoremstyle{definition}
\newtheorem{defn}[lemma]{Definition}
\theoremstyle{remark}
\newtheorem{remark}[lemma]{Remark}
\newtheorem*{remark*}{Remark}
\title{Banach's isometric subspace problem in dimension four}
\author{Sergei Ivanov}
\author{Daniil Mamaev}
\author{Anya Nordskova}
\address{Sergei Ivanov: St.Petersburg Department of Steklov Mathematical Institute,
Fontanka 27, St.Petersburg 191023, Russia}
\email{svivanov@pdmi.ras.ru}
\address{Daniil Mamaev: St.Petersburg Department of Steklov Mathematical Institute,
Fontanka 27, St.Petersburg 191023, Russia}
\email{dan.mamaev@gmail.com}
\address{Anya Nordskova: St.Petersburg Department of Steklov Mathematical Institute,
Fontanka 27, St.Petersburg 191023, Russia, and Universiteit Hasselt, Agoralaan, gebouw D, 3590 Diepenbeek, Belgium}
\email{anya.nordskova@uhasselt.be}
\keywords{Ellipsoid characterization, convex body, cross-section}
\subjclass[2010]{Primary 46C15, 52A21; Secondary 52A20, 52A15}
\newcommand{\R}{\mathbb R}
\newcommand{\I}{\mathcal I}
\renewcommand{\sl}{\mathrm{sl}}
\newcommand{\vol}{\mathrm{vol}}
\newcommand{\Gr}{\mathrm{Gr}}
\newcommand{\id}{\mathrm{id}}
\DeclareMathOperator{\tr}{Trace}
\DeclareMathOperator{\dist}{dist}
\DeclareMathOperator{\linspan}{LinSpan}
\DeclareMathOperator{\GL}{GL}
\DeclareMathOperator{\pr}{pr}
\DeclareMathOperator{\Hom}{Hom}
\newcommand{\la}{\lambda}
\newcommand{\pd}{\partial}
\newcommand{\co}{\colon}
\newcommand{\ep}{\varepsilon}
\newcommand{\ga}{\gamma}
\begin{document}

\begin{abstract}
We prove that if all intersections of a convex body $B\subset\mathbb R^4$ 
with 3-dimensional linear subspaces are linearly equivalent 
then $B$ is a centered ellipsoid.
This gives an affirmative answer to the case $n=3$ of the following question by Banach from 1932: Is a normed vector space $V$ whose $n$-dimensional linear subspaces are all isometric, for a fixed $2 \le n< \dim V$, necessarily Euclidean?

The dimensions $n=3$ and $\dim V=4$ is the first case where the question was unresolved. Since the $3$-sphere is parallelizable, known global topological methods do not help in this case. Our proof employs a differential geometric approach.
\end{abstract}

\maketitle

\section{Introduction}
 
In this paper we give an affirmative answer to the case $n=3$ of
the following problem:

\begin{problem}\label{problem with norm}
Let $(V,\|\cdot\|)$ be a normed vector space (over $\R$) such that
for some fixed $n$, ${2\le n<\dim V}$, all $n$-dimensional
linear subspaces of $V$ are isometric.
Is $\|\cdot\|$ necessarily a Euclidean norm
(i.e., an inner product one)?
\end{problem}

The problem goes back to Banach 
\cite{Banach32}*{Remarks on Chapter XII} and is also known as ``Banach's conjecture'':
it is conjectured that the answer is always affirmative.
This has been proven for many but not all dimensions.
Auerbach, Mazur and Ulam \cite{AMU} proved the conjecture in the case $n=2$.
Dvoretzky \cite{Dvo} proved it for an infinite-dimensional $V$ and any $n \ge 2$
as a corollary of his famous theorem on approximately Euclidean subspaces.
Gromov \cite{Gro67} proved that the answer is affirmative
if $n$ is even or $\dim V\ge n+2$.
Recently Bor, Hern\'andez-Lamoneda, Jim\'enez-Desantiago and Montejano
\cite{BHJM21} extended Gromov's approach 
and proved the conjecture for $n$ congruent to $1$ modulo $4$ except $n=133$.
The problem also makes sense for complex normed spaces,
see \cites{Mil,Gro67,BM21} for some results in this direction.

Recall that (due to the parallelogram law)
a normed vector space $V$ is Euclidean if and only if
all its $2$-dimensional linear subspaces are.
This fact reduces Problem~\ref{problem with norm} to the case $\dim V = n + 1$,
and below we focus only on this case.  

We say that two sets $A, B\subseteq V$ 
are {\em linearly equivalent} if there is
a linear bijection $L\colon\linspan(A) \to \linspan(B)$ 
such that $L(A)=B$.
Considering the unit ball of the norm, one can reformulate 
Problem~\ref{problem with norm} in geometric terms as follows.

\begin{problem}\label{problem with body}
Let $n\ge 2$ and let $B\subset\R^{n+1}$
be a convex body 
symmetric with respect to the origin.
Assume that all intersections of $B$ with $n$-dimensional linear subspaces
are linearly equivalent.
Is $B$ necessarily an ellipsoid?
\end{problem}

The symmetry assumption for $B$ can be removed
due to to the following result of Montejano
\cite{Mon91}*{Theorem~1}:
If $B\subset\R^{n+1}$ is a convex body and
all intersections of $B$ with $n$-dimensional linear subspaces
are affine equivalent, then either $B$ is symmetric with respect to~0,
or $B$ is a (not necessarily centered) ellipsoid.
In particular, linear equivalence of the intersections implies that $B$ is symmetric with respect to~0.
Keeping this fact in mind, we do not assume the symmetry in the sequel
as some of our intermediate results may be of interest in the non-symmetric case. 

Our main result is the following theorem, which implies affirmative answers to the above problems for $n=3$.

\begin{theorem}\label{t:main}
Let $B\subset\R^4$ be a convex body such that
all intersections of~$B$ with 3-dimensional linear subspaces of $\R^4$ are linearly equivalent.
Then $B$ is an ellipsoid centered at~$0$.
\end{theorem}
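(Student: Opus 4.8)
The plan is to reduce to a normalized situation and then run a differential‑geometric rigidity argument, the heart of which avoids the global topological obstructions that settle the even and the $n\equiv 1\ (\mathrm{mod}\ 4)$ cases but are unavailable here because $S^3$ is parallelizable.

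\textbf{Step 1 (Set‑up).} By Montejano's theorem quoted above, linear equivalence of all central hyperplane sections forces $-B=B$, so we may assume $B$ is centered. Fix a model body $K\subset\R^3$ to which every section $B\cap H$, $H\in\Gr(3,4)$, is linearly equivalent, and normalize $K$ so that its John ellipsoid is the Euclidean unit ball. Then $G:=\{L\in\GL(3)\co L(K)=K\}$ is a closed subgroup of $O(3)$ containing $-\id$, and for each $H$ the isomorphisms $\phi\co H\to\R^3$ with $\phi(B\cap H)=K$ form a left coset of $G$. As $H$ varies these cosets assemble into a principal $G$‑bundle over $\Gr(3,4)\cong\P^3$, i.e.\ a $G$‑reduction of the frame bundle of the tautological rank‑$3$ bundle $\tau\to\Gr(3,4)$.

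\textbf{Step 2 (Dichotomy and degenerate cases).} If $G$ acts transitively on $S^2$ then $G\supseteq SO(3)$, so $\pd K$ is a round sphere; hence every central hyperplane section of $B$ is an ellipsoid, every central $2$‑plane section of $B$ is an ellipse, and therefore (the squared radial function of $B$ restricted to every great circle being the reciprocal of a positive quadratic form) $B$ is an ellipsoid. So assume $G$ does not act transitively on $S^2$; then, up to conjugation, either $G$ is finite, or $SO(2)\subseteq G\subseteq O(2)\times O(1)$ and $G$ preserves a line $\R e_3\subset\R^3$. Next one reduces to the case that $B$, hence $K$, is smooth and strictly convex (and, by a regularity argument, has positive‑definite second fundamental form): a section $B\cap H$ fails to be smooth, resp.\ strictly convex, at a point precisely when $B$ does, so the non‑smooth and non‑strictly‑convex loci of $\pd B$ are strongly linearly homogeneous, and these degenerate cases are excluded separately by elementary and topological arguments.

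\textbf{Step 3 (Differential‑geometric core).} Assume now $B$ and $K$ are smooth and strictly convex and $K$ is not an ellipsoid, and work on the hypersurface $\pd B\cong S^3$. Through a point $p\in\pd B$ the sections $B\cap H$ with $H\ni p$ are parametrized by the projective plane of hyperplanes containing $\R p$; the tangent plane $T_p\pd B$ meets each such $H$ in a $2$‑plane, and these $2$‑planes exhaust all $2$‑planes in $T_p\pd B$. Each $\phi_H$ carries $p$ to a boundary point $\phi_H(p)\in\pd K$ and matches the $2$‑jet of $\pd(B\cap H)$ at $p$ with that of $\pd K$ at $\phi_H(p)$; comparing second fundamental forms along this family gives, at every $p$, algebraic constraints tying the shape operator of $\pd B$ to the fixed ``shape'' of $K$ (with the matching point on $\pd K$ as auxiliary data), and propagating them over $\pd B$ forces an integrability condition. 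In the axis case $SO(2)\subseteq G\subseteq O(2)\times O(1)$ the axis of $K$ is $G$‑invariant, so each $\phi_H$ singles out a well‑defined line $\ell_H=\phi_H^{-1}(\R e_3)\subset H$; this is a smooth line sub‑bundle of $\tau$, equivalently a section of $\P(\tau)\to\Gr(3,4)$, and one shows that such a distinguished‑axis field, together with the curvature constraints above, is incompatible with the convexity of $\pd B$ unless $K$ is an ellipsoid.

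\textbf{Step 4 (The main obstacle).} The crux is the finite‑$G$ case. Here the identifications $\phi_H$ are unique up to the finite group $G$, so the section structure is completely rigid, yet — $S^3$ being parallelizable — no global topological invariant obstructs it the way the Euler class does for $n$ even. One must therefore show by purely differential‑geometric means that no non‑ellipsoidal convex body $K$ with finite linear automorphism group can be the common shape of all hyperplane sections of a convex body in $\R^4$: tracking how the section shape and its finitely many affine symmetries vary along curves in $\Gr(3,4)\cong\P^3$ yields a holonomy/integrability equation, and one proves that its only solutions are ellipsoids. This step, which genuinely uses that the ambient dimension is $4$, is the technical heart of the argument and the place where the main difficulty lies.
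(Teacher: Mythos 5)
Your proposal is an outline rather than a proof: the two places where the entire mathematical content of the theorem has to live are stated as goals, not established. In Step~3 you assert that comparing second fundamental forms of the sections yields ``algebraic constraints'' whose propagation ``forces an integrability condition'' incompatible with convexity unless $K$ is an ellipsoid, and in Step~4 you explicitly concede that the finite-symmetry-group case ``is the technical heart of the argument and the place where the main difficulty lies'' --- but no holonomy or integrability equation is ever written down, and no argument is given that its only solutions are ellipsoids. Since the whole point of the case $n=3$, $\dim V=4$ is that the topological obstructions of Gromov's method vanish ($S^3$ is parallelizable, so \emph{any} centrally symmetric $K$ admits a continuous field of linearly equivalent bodies over $\Gr_3(\R^4)$) and one must extract rigidity from the integrability of the field as actual cross-sections, what you have proved is only the easy transitive-group branch of Step~2; everything else is a statement of intent.

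There is also a genuine gap in the claimed reduction to smooth, strictly convex $B$: linear homogeneity of the singular locus of $\pd B$ does not exclude a nonsmooth $K$, because a $2$-dimensional singular set in $\pd B\cong S^3$ can perfectly well meet every central hyperplane, and no ``elementary and topological argument'' ruling this out is supplied. This is why the paper never assumes smoothness: it works with one-sided directional derivatives and Bouligand tangent cones throughout, and invokes Rademacher's theorem only to differentiate the cross-section area at a \emph{generic} hyperplane. For contrast, the paper does not globalize over $\Gr_3(\R^4)$ at all: it differentiates the family of equivalences at a single generic section $X$ to produce a trace-free tensor $R\co X^*\to\Hom(X,X)$ of tangent linear fields (Proposition~\ref{prop:Rexists}), proves a self-contained three-dimensional rigidity statement for bodies carrying such a tensor (Proposition~\ref{prop:Rtangent}, via polynomial vector fields, Poincar\'e--Bendixson and an analyticity bootstrap), and then concludes with the Blaschke--Kakutani characterization. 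Some such concrete local mechanism is exactly what is missing from your Steps~3 and~4.
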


In contrast with the aforementioned works \cites{AMU,Gro67,BHJM21}, the proof of Theorem~\ref{t:main} is essentially local: for the most part it deals with an
arbitrarily small neighborhood of one cross-section.
A similar approach was developed in \cite{mono} where a ``local''
variant of Banach's problem was solved for $n=2$ and
smooth strictly convex norms.
An analogous generalization is possible for Theorem \ref{t:main}
but it is outside the scope of this paper.
The present paper borrows many ideas from \cite{mono},
in particular Proposition \ref{prop:Rexists} is a 
generalization of constructions in \cite{mono}*{\S3}.
We believe that this approach to Banach's problem
is viable in all dimensions, but so far we were able
to make it work only for $n=2$ and $n=3$.

\subsection*{``Integrable'' and ``non-integrable'' problems}
The proofs in \cites{AMU, Gro67, BHJM21}
are based on topological methods.
In fact, the arguments in these proofs also solve the following ``non-integrable''
version of the problem where one considers a family of $n$-dimensional convex bodies
in linear subspaces
without requiring that they are cross-sections of a single body in $V$:

\smallskip

{\it
Let $V$ be a vector space, $2\le n<\dim V$, and
let $K\subset\R^n$ be a convex body.
Assume there exists a continuous family $\{K_X\}$ where
$X$ ranges over all $n$-dimensional linear subspaces of $V$
and each $K_X$ is a convex body in $X$ linearly equivalent to~$K$.
For what types of $K$ is this possible?
}

\smallskip

This problem is topological by its nature and it can be reduced
to the study of structure groups of certain bundles, see \cites{Gro67,BHJM21}. 
Gromov \cite{Gro67} showed that if $n=2k$ or $\dim V\ge n+2$
then $K$ in this problem must be an ellipsoid.
This obviously implies the respective cases of Banach's conjecture.
Although the ``non-integrable'' generalization fails for $n=2k+1=\dim V-1$,
in some cases one can obtain strong restrictions
on the geometry of the body in question.
Namely, in \cite{BHJM21} it is shown that
for $n=4k+1$, except $n=133$, $K$ must be an affine body of revolution
(i.e., has an $SO(n-1)$ symmetry), and
then it follows that in the original ``integrable'' setting $K$ is an ellipsoid.

We note that considering the non-integrable problem does not help if $n=3$ and $\dim V=4$
because in this case any centrally symmetric convex body can be chosen for~$K$.
This follows from the fact that the 3-sphere is parallelizable,
see \cite{Burton} for a detailed explanation.

\subsection*{Structure of the proof and organization of the paper}
The plan of the proof of Theorem~\ref{t:main} is detailed in Section \ref{sec:plan},
here is an overview.

The proof has two main steps.
In the first step, stated as Proposition \ref{prop:Rexists},
we obtain differential geometric implications of ``local integrability''
near a generic cross-section
$K=X\cap B$ where $X$ is an $n$-dimensional linear subspace.
Loosely speaking, Proposition \ref{prop:Rexists} provides us with
an algebraic $n(n-1)$-parameter family of vectors tangent
to the boundary of~$B$ at points of~$K$.
Furthermore it includes an $(n-1)^2$-parameter sub-family
of vectors tangent to the cross-section~$K$ itself,
see \eqref{e:tangency assumption}.

In the second step we study restrictions on the geometry of the cross-section~$K$
imposed by the existence of such a family of tangent vectors.
The resulting statement is Proposition \ref{prop:Rtangent}.
It asserts that, at least for $n=3$, the linear vector fields provided
by Proposition \ref{prop:Rexists}
are tangent to~$\pd K$ everywhere rather than only at certain planes
as in~\eqref{e:tangency assumption}.
The proof of Proposition \ref{prop:Rtangent} is the most substantial part of the paper, see \S\ref{subsec:plan main} for an outline.

Theorem \ref{t:main} follows from Propositions \ref{prop:Rexists}
and \ref{prop:Rtangent} with an application of
Blaschke-Kakutani ellipsoid characterization,
see \S\ref{subsec:proof main}.
The specifics of the dimension $n = 3$ are used only in the proof of Proposition~\ref{prop:Rtangent},
while the proof of Proposition~\ref{prop:Rexists} and the deduction of Theorem~\ref{t:main} from the propositions work for all $n \ge 2$.

The rest of the paper is organized as follows.
In Section \ref{sec:plan} we
introduce definitions and notation,
formulate Propositions \ref{prop:Rexists} and \ref{prop:Rtangent},
and deduce Theorem \ref{t:main} from them.
Section~\ref{sec: polynomial vector fields} is a collection of technical lemmas
(about polynomial vector fields tangent to convex hypersurfaces) used throughout the paper.
The remaining sections are devoted to the proofs of
Propositions \ref{prop:Rexists} and \ref{prop:Rtangent}.
The proof of Proposition~\ref{prop:Rexists} is given in Section~\ref{sec:proofRexists}.
In Section~\ref{sec:degenerate} we consider degenerate cases
of Proposition~\ref{prop:Rtangent},
and in Section~\ref{sec:non-degenerate} we handle the non-degenerate case
and finish the proof of Proposition~\ref{prop:Rtangent}.

\subsection*{Acknowledgement}
The authors are grateful to the anonymous referee for their
thorough reading of the manuscript and helpful suggestions.

This work is supported by the Russian Science Foundation under Grant 21-11-00040.

\section{Preliminaries and the plan of the proof}
\label{sec:plan}

In this section we set up terminology and notation,
formulate the two main propositions and deduce Theorem \ref{t:main} from them.

\subsection{Notation and preliminaries}
In this paper, a ``vector space''
always means a finite-dimensional real vector space.
For vector spaces $X$ and $Y$, $\Hom(X,Y)$ denotes
the space of linear maps from $X$ to~$Y$.
We denote $\Hom(X,\R)$ by $X^*$.

For  a vector space $X$, we denote by $\Gr_k(X)$ the Grassmannian manifold
of (unoriented) $k$-dimensional linear subspaces of~$X$ and
by $GL(X)$ the group of linear bijections from $X$ to itself.

For a convex set $K\subset X$, we denote by $\pd K$
the relative boundary of $K$, that is the boundary in the topology of its affine span,
and by $\I(K)$ the \emph{group of linear self-equivalences of $K$}:
\begin{equation}\label{e:defI(K)}
\I(K) = \{ L\in GL(X) : L(K)=K \} .
\end{equation}
A convex set $K \subset X$ is called a {\em convex body} if it is compact and
its interior is non-empty.
If $K$ is a convex body then $\I(K)$ is a compact subgroup of $GL(X)$.

By an {\em ellipsoid} we mean the unit ball of an inner product norm in a vector space.
In other words, all ellipsoids are assumed to be centered at~$0$ (unless explicitly stated otherwise). 
The same terminology adjustment applies to ellipses in dimension~2.

By a {\em Minkowski norm} on a vector space $V$ we mean a
function $\Psi\co V\to\R_+$ which is positively 1-homogeneous,
subadditive (and hence convex), and positive on $V\setminus\{0\}$.
The difference from usual norms is that a Minkowski norm
is not assumed symmetric.
There is a standard 1-to-1 correspondence between Minkowski norms on $V$ and
convex bodies in $V$ with 0 in the interior.
Namely, to each Minkowski norm $\Psi$ one associates its unit ball
$B_\Psi = \{ x\in V : \Psi(x)\le 1\}$, and for every convex
body $B\subset V$ with 0 in the interior there 
is a corresponding Minkowski norm $\Psi^B(x) = \inf\{\lambda > 0 \mid x/\lambda \in B\}$.

The convexity of $\Psi$ implies (see e.g.\ \cite{Clarke}*{2.1.1 and 2.2.7}) that
the {\em one-sided directional derivative}
$$
\partial_x^+\Psi(v) = \lim_{s \downarrow 0} \frac{\Psi(x + sv) - \Psi(x)}{s} 
$$ 
is well-defined for all $x, v \in V$. Moreover, $\partial_x^+\Psi \colon V \to \R$ is positively 1-homogeneous and subadditive for all $x \in V$. 

There are many equivalent definitions of one-sided tangent directions of a convex hypersurface. In the present paper we use the one by Bouligand:

\begin{defn} \label{d:forward tangent}
Let $V$ be a vector space, $B\subset V$ a convex body, $x\in\pd B$, and $v\in V$.
We say that $v$ is \textit{forward tangent to $\pd B$ at $x$}
if there exist sequences $\{x_i\}_{i=1}^\infty\subset\pd B$
and $\{c_i\}_{i=1}^\infty\subset\R_+$ such that $x_i\to x$
and $c_i(x_i-x)\to v$ as $i\to\infty$.

We say that $v$ is \textit{tangent to $B$ at $x$}
if  both $v$ and $-v$ are forward tangent to $B$ at~$x$.
\end{defn}

The following two lemmas are standard. 
We include their proofs for the reader's convenience
and to avoid a discussion on various definitions of tangency.

\begin{lemma}\label{l:tangent cone}
Let $V$ be a vector space and $B\subset V$
a convex body with 0 in the interior.
Let $x\in \pd B$ and $v\in V$. Then the following conditions
are equivalent:
\begin{enumerate}
\item $v$ is forward tangent to $\pd B$ at $x$.
\item $\pd^+_x\Psi(v) = 0$ where $\Psi$ is the Minkowski norm associated to~$B$.
\end{enumerate}
\end{lemma}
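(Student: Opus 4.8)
The plan is to work entirely with the Minkowski norm $\Psi = \Psi^B$ and never return to the body $B$ itself. Recall that, since $B$ is a convex body with $0$ in its interior, $\Psi$ is a finite, continuous, convex function with $\pd B = \{y\in V : \Psi(y)=1\}$; in particular $\Psi(x)=1$, and $\Psi(x_i)=1$ for the points $x_i$ appearing in Definition~\ref{d:forward tangent}. I would record two further standard facts. First, for each fixed $p$ the map $v\mapsto\pd^+_p\Psi(v)$ is sublinear (as already noted in the excerpt), hence finite and therefore continuous; moreover, writing $\pd^+_p\Psi(v)=\inf_{s>0}\bigl(\Psi(p+sv)-\Psi(p)\bigr)/s$ as an infimum of functions jointly continuous in $(p,v)$, the map $(p,v)\mapsto\pd^+_p\Psi(v)$ is upper semicontinuous. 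Second, the elementary two-sided estimate
\begin{equation*}
\pd^+_p\Psi(q-p) \;\le\; \Psi(q)-\Psi(p) \;\le\; \pd^+_q\Psi(q-p)
\qquad\text{for all } p,q\in V ,
\end{equation*}
which follows by comparing, for the convex function $t\mapsto\Psi\bigl(p+t(q-p)\bigr)$ on $[0,1]$, the slope of the chord over $[0,1]$ with the one-sided derivatives at the two endpoints.

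For the implication $(2)\Rightarrow(1)$ I would use a radial projection. Assume $\pd^+_x\Psi(v)=0$. Applying the left-hand estimate above with $p=x$, $q=x+sv$ gives $\Psi(x+sv)\ge\Psi(x)=1$ for all $s>0$, so $\eta(s):=\Psi(x+sv)-1\ge 0$, and $\eta(s)/s=\bigl(\Psi(x+sv)-\Psi(x)\bigr)/s\to\pd^+_x\Psi(v)=0$ as $s\downarrow 0$. For small $s$ we have $\Psi(x+sv)=1+\eta(s)>0$, so $x_s:=(x+sv)/(1+\eta(s))$ satisfies $\Psi(x_s)=1$, i.e.\ $x_s\in\pd B$, and $x_s\to x$. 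A one-line computation gives $x_s-x=\bigl(sv-\eta(s)x\bigr)/(1+\eta(s))$, hence with $c_s:=(1+\eta(s))/s>0$ one obtains $c_s(x_s-x)=v-(\eta(s)/s)\,x\to v$. Choosing any sequence $s_i\downarrow 0$ and setting $x_i:=x_{s_i}$, $c_i:=c_{s_i}$ exhibits $v$ as forward tangent to $\pd B$ at $x$.

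For $(1)\Rightarrow(2)$ I would establish the two inequalities $\pd^+_x\Psi(v)\le 0$ and $\pd^+_x\Psi(v)\ge 0$ separately. The upper bound is immediate: the left-hand estimate with $p=x$, $q=x_i$ gives $\pd^+_x\Psi(x_i-x)\le\Psi(x_i)-\Psi(x)=0$, so $\pd^+_x\Psi\bigl(c_i(x_i-x)\bigr)\le 0$ by positive homogeneity, and letting $i\to\infty$ with continuity of $\pd^+_x\Psi$ yields $\pd^+_x\Psi(v)\le 0$. The lower bound is the only delicate point, because forward tangency is not symmetric and one cannot simply apply the previous step to $-v$; here I would instead use the right-hand estimate with $q=x_i$, $p=x$, which gives $0=\Psi(x_i)-\Psi(x)\le\pd^+_{x_i}\Psi(x_i-x)$, hence $\pd^+_{x_i}\Psi\bigl(c_i(x_i-x)\bigr)\ge 0$, and then, since $(x_i,c_i(x_i-x))\to(x,v)$, upper semicontinuity of $(p,w)\mapsto\pd^+_p\Psi(w)$ gives $0\le\limsup_i\pd^+_{x_i}\Psi\bigl(c_i(x_i-x)\bigr)\le\pd^+_x\Psi(v)$. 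The two bounds together give $\pd^+_x\Psi(v)=0$. I expect that no single hard estimate is involved; the one thing to get right is precisely this moving–base-point upper semicontinuity argument, which is what forces one to keep track of $\pd^+$ at the points $x_i$ rather than only at $x$.
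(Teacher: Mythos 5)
Your proof is correct. The direction $(2)\Rightarrow(1)$ is exactly the paper's argument: the same radial projection $x_s=(x+sv)/\Psi(x+sv)$ with $c_s=\Psi(x+sv)/s$. The direction $(1)\Rightarrow(2)$ is where you genuinely diverge. The paper uses the positive homogeneity of $\Psi$ to rewrite the difference quotient as $\Psi(c_ix+v)-\Psi(c_ix_i)$ and then sandwiches this quantity between $-\Psi(c_i(x_i-x)-v)$ and $\Psi(c_i(x-x_i)+v)$ via subadditivity; since $c_i(x_i-x)\to v$, both bounds tend to $\Psi(0)=0$, giving the two inequalities in one stroke and never requiring directional derivatives at base points other than $x$. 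Your route proves $\pd^+_x\Psi(v)\le 0$ and $\pd^+_x\Psi(v)\ge 0$ separately; the lower bound needs the chord-slope inequality at the far endpoint $x_i$ together with upper semicontinuity of $(p,w)\mapsto\pd^+_p\Psi(w)$, which you correctly obtain by writing the derivative as an infimum of continuous difference quotients. Your version uses only the convexity of $\Psi$ and the fact that $\pd B$ is a level set, so it is slightly more general (it would apply verbatim to level sets of arbitrary convex functions), at the cost of introducing and justifying the moving--base-point semicontinuity; the paper's version is shorter precisely because the Minkowski-norm structure (homogeneity plus the triangle inequality) is available. Both arguments are complete.
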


\begin{proof}
Both conditions are obviously satisfied for $v=0$, so we assume that $v\ne 0$.
Let $v$ be forward tangent to $\pd B$ at $x$ and let $\{x_i\}_{i=1}^\infty\subset\pd B$, $\{c_i\}_{i=1}^\infty\subset\R_+$ be the corresponding sequences from Definition~\ref{d:forward tangent}.
Since $v\ne 0$, we have $c_i\to +\infty$, hence may assume that $\{c_i\}_{i=1}^\infty$
is an increasing sequence. We have 
$$
\pd^+_x\Psi(v) = \lim_{i \to \infty} \frac{\Psi(x + v/c_i) - \Psi(x)}{1/c_i} = \lim_{i \to \infty} \Psi(c_ix + v) - c_i\Psi(x) = \lim_{i \to \infty} \Psi(c_ix + v) - \Psi(c_ix_i).
$$ 
By the sub-additivity of $\Psi$,
$$
-\Psi(c_i(x_i - x) - v) \le \Psi(c_ix + v) - \Psi(c_ix_i) \le \Psi(c_i(x - x_i) + v) .
$$
Since $c_i(x_i - x) \to v$ as $i\to\infty$, we conclude that $\pd^+_x\Psi(v) = 0$.

On the other hand, if $\pd^+_x\Psi(v) = 0$ then we can choose
any sequence $\{s_i\}_{i = 1}^\infty \subset \R_{+}$ decreasing to~$0$
and set $x_i$ and $c_i$ required in Definition~\ref{d:forward tangent}
to be $x_i = \frac{x + s_i v}{\Psi(x + s_i v)}$ and $c_i = \frac{\Psi(x + s_iv)}{s_i}$.
\end{proof}

\begin{lemma}\label{l:tangent linear}
Let $V$ be a vector space, $B\subset V$
a convex body with 0 in the interior, and $x\in \pd B$.
Then the set of all vectors tangent to $\pd B$ at $x$
is a linear subspace of~$V$.
\end{lemma}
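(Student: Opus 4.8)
The plan is to reduce the statement, via Lemma~\ref{l:tangent cone}, to an elementary fact about sublinear functions. Let $\Psi$ be the Minkowski norm associated to~$B$ and set $p := \pd^+_x\Psi$. By Lemma~\ref{l:tangent cone}, a vector $v\in V$ is forward tangent to $\pd B$ at~$x$ if and only if $p(v)=0$; hence, unwinding Definition~\ref{d:forward tangent}, the set $T$ of all vectors tangent to $\pd B$ at~$x$ (those $v$ for which both $v$ and $-v$ are forward tangent to $\pd B$ at~$x$) satisfies
\[
T = \{\, v\in V : p(v)=0 \text{ and } p(-v)=0 \,\}.
\]
Note that the hypothesis $0\in\Int B$ is exactly what makes Lemma~\ref{l:tangent cone} applicable here.

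Next I would invoke the two properties of $p$ recorded just before Definition~\ref{d:forward tangent}: $p$ is positively $1$-homogeneous (so in particular $p(0)=0$) and subadditive. Subadditivity gives $p(v)+p(-v)\ge p(0)=0$ for every $v$, so whenever $p(v)\le 0$ and $p(-v)\le 0$, both inequalities must in fact be equalities. Therefore $T=\{\,v\in V : p(v)\le 0 \text{ and } p(-v)\le 0\,\}$; that is, $T$ is the largest symmetric subset of the sublevel set $\{p\le 0\}$.

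Finally, checking that $T$ is a linear subspace is immediate from sublinearity of $p$: we have $0\in T$; if $v,w\in T$ then $p(v+w)\le p(v)+p(w)\le 0$ and likewise $p(-(v+w))\le 0$, so $v+w\in T$; and if $v\in T$, then for $\la\ge 0$ positive homogeneity gives $p(\la v)=\la p(v)\le 0$ and $p(-\la v)=\la p(-v)\le 0$, while for $\la<0$ the vector $\la v=|\la|(-v)$ is a nonnegative multiple of $-v\in T$. Hence $T$ is closed under addition and scalar multiplication. I do not anticipate any genuine obstacle in this argument; the only step requiring a little care is the first reduction, where one must apply Lemma~\ref{l:tangent cone} to both $v$ and $-v$ and keep the standing hypothesis $0\in\Int B$ in view.
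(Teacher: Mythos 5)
Your proposal is correct and follows essentially the same route as the paper: reduce tangency to the vanishing of the sublinear function $\pd_x^+\Psi$ via Lemma~\ref{l:tangent cone}, then use subadditivity and positive homogeneity. The only cosmetic difference is that you characterize the tangent set as the symmetric part of the sublevel set $\{\pd_x^+\Psi\le 0\}$, whereas the paper verifies the two-sided equality $\pd_x^+\Psi(v+w)=0$ directly from the inequalities $\pd_x^+\Psi(v+w)\le \pd_x^+\Psi(v)+\pd_x^+\Psi(w)$ and $\pd_x^+\Psi(v+w)\ge \pd_x^+\Psi(v)-\pd_x^+\Psi(-w)$; both arguments are instances of the same subadditivity computation.
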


\begin{proof}
By Lemma~\ref{l:tangent cone}, $v \in V$ is tangent to $\pd B$ at $x$ if and only if 
$\pd^+_x\Psi(v) = \pd^+_x\Psi(-v) = 0$.
Since $\pd^+_x\Psi$ is positively 1-homogeneous, this implies that
$\pd^+_x\Psi(tv) = 0$ for all $t\in \R$.

Let $v,w$ be tangent to $\pd B$ at $x$. Then, by the sub-additivity of $\pd^+_x\Psi$, 
$$
\pd^+_x\Psi(v+w)\le \pd^+_x\Psi(v)+\pd^+_x\Psi(w)=0
$$
and
$$
\pd^+_x\Psi(v+w)\ge \pd^+_x\Psi(v)-\pd^+_x\Psi(-w)=0 .
$$
Hence $\pd^+_x\Psi(v+w)=0$ and therefore $v+w$ is forward tangent to $\pd B$ at~$x$.
Similarly, $-(v+w)$ is forward tangent to $\pd B$ at~$x$, thus $v+w$ is tangent to $\pd B$ at $x$
and the lemma follows.
\end{proof}

\subsection{Steps of the proof of the main theorem}
\label{subsec:plan main}
We are now in a position to formulate two main
intermediate results in the proof of Theorem \ref{t:main}.
The first one is the following proposition,
which works in all dimensions.

\begin{prop}\label{prop:Rexists}
Let $V$ be a vector space, $\dim V=n+1\ge 3$,
and let $B\subset V$ be a convex body such that
all intersections of~$B$ with $n$-dimensional linear subspaces of $V$ are linearly equivalent.
Then for almost every $X\in\Gr_n(V)$ there exist
a vector $\nu\in V\setminus X$
and a linear map
$$
 R\co X^*\to\Hom(X,X)
$$
such that for every $\la\in X^*$
the linear operator $R_\la = R(\la)\co X\to X$ satisfies:
\begin{enumerate}
\item $\tr R_\la=0$.
\item For every $x\in \pd B\cap X$,
the vector $R_\la(x) + \la(x)\nu$ is tangent to $\pd B$ at~$x$.
\end{enumerate}
\end{prop}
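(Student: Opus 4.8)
The idea is to linearize the "local integrability" hypothesis. Fix a generic $X \in \Gr_n(V)$ and set $K = B \cap X$. For a nearby subspace $X' \in \Gr_n(V)$, the cross-section $K' = B \cap X'$ is linearly equivalent to $K$, so there is $L_{X'} \in GL(V)$ (or at least a linear iso $X \to X'$) with $L_{X'}(K) = K'$. Parametrize the subspaces near $X$ by $\Hom(X, W)$ where $W$ is a complement of $X$ (of dimension one, so $W = \R\nu$ for a chosen $\nu \notin X$); i.e. $X_\phi = \{ x + \phi(x) : x \in X\}$ for $\phi \in \Hom(X, W) \cong X^*$. The map $\phi \mapsto L_{X_\phi}$ can be chosen measurably, and after a standard argument (using that $\I(K)$ is compact and a measurable selection / Lusin-type approximation) one arranges that $\phi \mapsto L_{X_\phi}$ is differentiable at $\phi = 0$ for a positive-measure set of base points $X$ — this is where "almost every $X$" enters. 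Differentiating the identity $L_{X_\phi}(K) = B \cap X_\phi$ at $\phi = 0$ produces, for each $\lambda \in X^*$ (thought of as a tangent direction in $\Hom(X,W)$), a linear vector field on $X$ of the form $x \mapsto \dot L(\lambda)(x)$; projecting to $X$ and to $W = \R\nu$ and using that the section $K'$ sits inside $X' = X_\phi$, the infinitesimal deformation of a boundary point $x \in \pd K$ has $X$-component $R_\lambda(x)$ for some $R_\lambda \in \Hom(X,X)$ and $W$-component exactly $\lambda(x)\nu$ (the latter because the point must move onto $X_\phi$, whose defining graph condition reads $w = \phi(x) = \lambda(x)\nu$ to first order). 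Since this deformation keeps the point on $\pd B$, the velocity $R_\lambda(x) + \lambda(x)\nu$ is tangent to $\pd B$ at $x$, giving~(2).

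For property~(1), the vanishing of the trace, I would use a volume/Jacobian argument. The deformation $L_{X_\phi}$ can be normalized; the natural normalization is that it is volume-preserving on $X$ up to the scaling forced by the geometry, but more robustly: consider the $n$-dimensional volume of the cross-section as a function of $\phi$. Because all cross-sections are linearly equivalent to a fixed $K$, one can renormalize $L_{X_\phi}$ (post-compose with a scalar) so that, say, $\det(L_{X_\phi}|_X : X \to X_\phi)$ measured against fixed volume forms is constant. Differentiating $\det = \text{const}$ at $\phi = 0$ yields $\tr \dot L(\lambda) = 0$ in the appropriate sense, and the $X$-to-$X$ part of this is precisely $\tr R_\lambda = 0$. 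Linearity of $R$ in $\lambda$ is automatic since $R_\lambda$ is the derivative of $\phi \mapsto L_{X_\phi}$ composed with the linear parametrization $\lambda \mapsto \phi$.

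The main technical obstacle is the \emph{regularity/measurability} step: the assignment $X' \mapsto L_{X'}$ is only defined up to the symmetry group $\I(K)$ and there is no reason for a globally smooth choice to exist, so one cannot simply "differentiate" naively. I would handle this by: (a) forming the quotient $GL$-bundle and showing the hypothesis gives a measurable section; (b) invoking a theorem on differentiability of measurable maps (Lebesgue points / approximate differentiability, or a Rademacher-type statement after establishing local Lipschitz bounds from compactness of $\I(K)$ and of the relevant Grassmannian neighborhood); (c) concluding that for a.e. $X$ the section is approximately differentiable, which suffices to extract $R$ and $\nu$. A secondary subtlety is ensuring $\nu \notin X$ can be chosen uniformly and that the $W$-component of the velocity is genuinely $\lambda(x)\nu$ and not something weaker — this follows by writing the graph condition for $X_\phi$ explicitly and matching first-order terms, but it must be done carefully because the identification $\Hom(X, W) \cong X^*$ depends on the choice of $\nu$. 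The remaining computations (chain rule, splitting $V = X \oplus \R\nu$, reading off $R_\lambda$ and checking tangency via Definition~\ref{d:forward tangent} or Lemma~\ref{l:tangent cone}) are routine.
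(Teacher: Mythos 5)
Your outline of the smooth case matches the paper's heuristic, but the two technical points you flag as the crux are exactly where your plan breaks down. First, the regularity step: you propose to make the selection $\phi\mapsto L_{X_\phi}$ approximately differentiable at $\phi=0$ for a.e.\ base point via measurable selection plus a Rademacher-type statement. There is no such theorem for merely measurable maps (Lusin approximation gives continuity on large sets, not differentiability), and the Lipschitz bound you would need --- that $\dist(\I_{X_\phi},\I_{X})=O(\|\phi\|)$ --- is itself non-obvious; in the paper it is essentially the content of the claim $C\ne 0$ in Lemma~\ref{l:Wla trivia}(4), proved by an orthogonality argument (a nonzero limit direction lying entirely in $X$ would be a linear field tangent to $\pd K$, hence in $T_{i_0}\I_0$, yet orthogonal to it). The paper avoids differentiating any selection at all: it fixes an $\I(K)$-invariant inner product, takes \emph{nearest-point} representatives $F_k\in\I_{t_k\la}$ to the inclusion $i_0$, and extracts a subsequential limit of $(F_k-i_0)/\|F_k-i_0\|$; tangency then follows directly from Definition~\ref{d:forward tangent} applied to the sequence $F_k(x)$, and linearity in $\la$ comes from a separate uniqueness argument (Proposition~\ref{prop:Rdef}), not from the chain rule. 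The ``almost every $X$'' in the paper has nothing to do with the selection: it comes from Rademacher applied to the Lipschitz cross-section area function on $\Gr_n(V)$.

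Second, the trace condition. You cannot ``renormalize $L_{X_\phi}$ by a scalar so that its determinant is constant'': the determinant of a linear equivalence $K\to B\cap X_\phi$ against fixed volume forms is forced to equal $\vol(B\cap X_\phi)/\vol(K)$, which varies with $\phi$, and rescaling would destroy the property $L_{X_\phi}(K)=B\cap X_\phi$. Moreover, even granting a first-order volume identity, the derivative of the determinant is not $\tr R_\la$ but $\tr R_\la$ plus a term coming from the $\la(x)\nu$-component of the deformation (the tilt of the hyperplane contributes to the area change). This is why $\nu$ cannot be arbitrary: the paper chooses $X$ to be a differentiability point of the area function $A$ and then picks $\nu$ in the kernel of the exterior form $d_\sigma A$ (equation \eqref{e:nu in kernel}), which is precisely the condition killing the tilt term so that $d_\sigma A(\dot\sigma_t)=\tr R_\la\cdot A(\sigma)=0$ forces $\tr R_\la=0$ (Lemma~\ref{l:trace0}). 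Equivalently, in the smooth case one must a posteriori replace $\nu$ by $\nu+w$ with $\la(w)=\tr R_\la$. Your proposal treats $\nu$ as a free choice of complement and expects the trace to vanish automatically, which is false.
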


Note that the assumptions of Proposition \ref{prop:Rexists}
imply that $B$ contains 0 in its interior.

The proof of Proposition \ref{prop:Rexists} is given in 
Section \ref{sec:proofRexists}.
In the case when $B$ is smooth, the argument is a simple
differential geometric calculation, see \S\ref{subsec:Rexists smooth}.
In the general case the proof simulates the same calculation
replacing derivatives by suitable partial limits.
We make use of the fact that Lipschitz functions
(in particular, convex ones) are differentiable almost everywhere,
this is the reason why ``almost every $X$'' appears in the statement.

Let $V, B, X$ and $R$ be as in Proposition~\ref{prop:Rexists} and denote $K = B \cap X$.
Proposition \ref{prop:Rexists}(2) implies the following property, which plays
a central role in subsequent arguments:
\begin{equation} \label{e:tangency assumption}
    \text{for every } \la \in X^* \text{ and } x\in \pd K \cap \ker\la, \text{ the vector } R_\la(x) \text{ is tangent to }\pd K \text{ at~$x$}.
\end{equation}
Indeed, if $\la(x)=0$ then the vector $R_\la(x) + \la(x)\nu$ from Proposition \ref{prop:Rexists}(2)
equals $R_\la(x)$ and hence belongs to~$X$,
therefore this vector is also tangent to~$\pd K = \pd B \cap X$.
Note that \eqref{e:tangency assumption} does not involve
the $(n+1)$-dimensional space $V$ and the body $B$, i.e.
it is a statement about $K$ and $R$ that live in the $n$-dimensional space~$X$.

\smallskip

The second step in the proof of Theorem \ref{t:main}
is  the following proposition.

\begin{prop}\label{prop:Rtangent}
Let $X$ be a vector space, $\dim X=3$,
and let $K\subset X$ be a convex body with $0$ in its interior.
Let $R\co X^*\to\Hom(X,X)$ be a linear map such that
for every $\la\in X^*$ the map $R_\la=R(\la)$ satisfies
$\tr R_\la=0$ and \eqref{e:tangency assumption} holds.
Then $R_\la(x)$ is tangent to $\pd K$ for all $x\in \pd K$.
\end{prop}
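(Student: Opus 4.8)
The strategy is to assume, for contradiction, that some $R_\la(x_0)$ fails to be tangent to $\pd K$ at some $x_0 \in \pd K$, and to extract as much information as possible from the hypothesis \eqref{e:tangency assumption}, which only gives tangency on the plane $\ker\la$. The first step is to set up coordinates that are adapted to the problem: one wants to understand the bilinear structure $R \colon X^* \times X \to X$, i.e.\ a tensor in $X^* \otimes X^* \otimes X$ constrained by $\tr R_\la = 0$ for all $\la$. I would decompose this tensor with respect to the symmetry between its two covariant slots. Writing $R_\la(x)$ as a function bilinear in $(\la, x)$, one can split it into a part symmetric in $(\la, x)$ when suitably identified and an antisymmetric part; since $\dim X = 3$, antisymmetric $2$-tensors are identified with vectors (the cross product), so the antisymmetric part of $R$ should be governed by a small number of parameters, while the symmetric part with the trace-free constraint lives in a space of dimension $3 \cdot 6 - 3 = 15$. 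Getting a clean normal form here is the foundation for everything that follows.

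The second and main step is to convert the tangency condition \eqref{e:tangency assumption} into a functional/algebraic condition on the support function (or radial function) of $K$. Fix $\la$; the condition says that along the curve $\ga_\la = \pd K \cap \ker\la$, the linear vector field $x \mapsto R_\la(x)$ is tangent to $\pd K$. Equivalently, using Lemma~\ref{l:tangent cone}, $\pd^+_x\Psi(R_\la(x)) = \pd^+_x\Psi(-R_\la(x)) = 0$ for $x \in \ga_\la$, where $\Psi = \Psi^K$. When $K$ is smooth and strictly convex this reads $\langle \nabla\Psi(x), R_\la(x)\rangle = 0$ on $\ga_\la$, a relation between the $1$-jet of $\pd K$ along a planar section and the operator $R_\la$. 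One then varies $\la$: the planes $\ker\la$ sweep out all of $X$, and for each boundary point $x \in \pd K$ there is a two-parameter family of functionals $\la$ with $\la(x) = 0$, each imposing a linear constraint on $\nabla\Psi(x)$ via a different operator $R_\la(x)$. The hope is that these constraints, as $\la$ ranges over the annihilator of $x$, force $\nabla\Psi(x) \perp R_\mu(x)$ for \emph{every} $\mu \in X^*$, not just those killing $x$ — which is exactly the desired conclusion. This requires showing that $\{R_\la(x) : \la(x) = 0\}$ spans the same subspace (namely the tangent plane $T_x\pd K$) as $\{R_\mu(x) : \mu \in X^*\}$, or analyzing the deficiency when it does not.

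The hard part, and where the dimension $n=3$ is genuinely used, is that the spanning property above can fail: for a given $x$, the vectors $R_\la(x)$ with $\la(x)=0$ might all lie in $T_x\pd K$ automatically (a "degenerate" configuration of $R$ at $x$) or might fail to span it, so no contradiction is immediate. I expect the bulk of the work to be a careful case analysis of the algebraic type of $R$ — organized by the rank and eigenstructure of the symmetric trace-free part and the size of the antisymmetric part — exactly matching the split of the paper into Section~\ref{sec:degenerate} (degenerate cases) and Section~\ref{sec:non-degenerate}. In the non-degenerate generic case one should be able to run the spanning argument and conclude directly; in the degenerate cases one must use convexity of $K$ more forcefully. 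The key non-elementary input will be the technical lemmas of Section~\ref{sec: polynomial vector fields} on polynomial (here: linear) vector fields tangent to a convex hypersurface: a linear vector field tangent to $\pd K$ along a hypersurface (a planar section) but not everywhere should, via convexity, be heavily constrained — roughly, its trajectories must stay on one side of $K$, and this forces algebraic relations that, combined with $\tr R_\la = 0$, leave no room except the conclusion. Handling the non-smooth, not-strictly-convex case (replacing $\nabla\Psi$ by the one-sided derivative $\pd^+_x\Psi$ and working with forward tangency on both sides) adds a layer of bookkeeping but, as in Lemmas~\ref{l:tangent cone} and~\ref{l:tangent linear}, should not change the essential structure of the argument.
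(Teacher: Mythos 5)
Your proposal correctly isolates the central dichotomy: whether the family $\{R_\la(x):\la(x)=0\}$ spans a full $2$-dimensional space at each $x\neq 0$ is exactly the paper's degenerate/non-degenerate split (Definition~\ref{d:degenerate point}). But there is a genuine gap in how you propose to close either branch. In the non-degenerate case you suggest that the constraints for $\la\in x^\perp$ should ``force $\nabla\Psi(x)\perp R_\mu(x)$ for every $\mu$'' via a spanning argument. This cannot work pointwise: the map $\mu\mapsto R_\mu(x)$ is linear from the $3$-dimensional space $X^*$ to $X$, and knowing that its restriction to the $2$-dimensional subspace $x^\perp$ has image equal to $T_x\pd K$ says nothing about where $R_{\mu_0}(x)$ lands for a single $\mu_0$ with $\mu_0(x)\neq 0$ — the full image is $T^R_x+\R\,R_{\mu_0}(x)$ and may well be all of $X$. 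Indeed, even when $K$ is already known to be an ellipsoid, the conclusion for $\mu(x)\neq 0$ is not a consequence of tangency data at $x$ alone: Lemma~\ref{l:Rtangent for ellipsoid} needs the condition $\tr R_\la=0$ combined with tangency at \emph{other} boundary points. So the conclusion is intrinsically global, and the paper's actual route in the non-degenerate case — $\pd K$ is an integral surface of the algebraic distribution $\{T^R_p\}$, hence real analytic and symmetric; $R$ induces a quadratic flow on the intersection body $IK\subset\Lambda^2X$ whose orbits consist of linearly equivalent $2$-sections and whose fixed points give elliptic sections; Poincar\'e--Bendixson and index arguments produce a curve of elliptic sections; analyticity then forces $K$ to be an ellipsoid — has no counterpart in your plan.

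The degenerate branch is also essentially missing. The technical lemmas of Section~\ref{sec: polynomial vector fields} apply to polynomial fields tangent to $\pd K$ \emph{everywhere}, whereas each $R_\la$ is a priori only tangent along the single curve $\pd K\cap\ker\la$, so they do not constrain $R_\la$ directly as you suggest. The paper's mechanism is to form the quadratic fields $V^R_{\la,\mu}(x)=\mu(x)R_\la(x)-\la(x)R_\mu(x)$, which \emph{are} globally tangent to $\pd K$ by \eqref{e:tangency assumption}, and then to exploit their pointwise collinearity (in the everywhere-degenerate case) or cubic fields on planar sections (in the partially degenerate case) via Lemmas~\ref{l:3 collinear fields} and~\ref{l:cubic vector field in the plane}. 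Your proposed symmetric/antisymmetric decomposition of the tensor $R$ plays no role in the paper and it is not clear it leads anywhere; the relevant algebraic invariant is the rank of $\la\mapsto R_\la(p)$ on $p^\perp$, which you do identify, but without the tangent quadratic fields and the flow on the space of planes the argument does not close.
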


The following view on Proposition \ref{prop:Rtangent} is helpful.
For each $\la\in X^*$, we regard the operator $R_\la\in\Hom(X,X)$
as a linear vector field on~$X$.
Proposition \ref{prop:Rtangent} asserts that this vector field
is tangent to~$\pd K$.
Therefore the 1-parameter subgroup $\{e^{tR_\la}\}_{t\in\R}$
generated by $R_\la$ is contained in the group of
linear self-equivalences of~$K$.
Since $K$ is 3-dimensional, this leaves us with three possibilities:
either $K$ is an ellipsoid and all operators $R_\la$ 
are skew-symmetric with respect to the inner product corresponding
to~$K$,
or $K$ is an affine body of revolution and all operators $R_\la$
are proportional to the infinitesimal generator of
the respective group of affine rotations,
or $R=0$.

In the proof of Proposition \ref{prop:Rtangent} we handle
the three cases separately
but the unified formulation allows us to deduce the main theorem
without case chasing.
In some of the cases we show directly that $K$ is an ellipsoid
and finish the proof with the following lemma.

\begin{lemma}\label{l:Rtangent for ellipsoid}
Proposition \ref{prop:Rtangent} holds true if $K$ is an ellipsoid.
\end{lemma}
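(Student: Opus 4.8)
The plan is to reduce to a classical rigidity statement about linear vector fields on an ellipsoid. Assume $K = \{x : \langle Ax, x\rangle \le 1\}$ for some positive-definite symmetric $A$, so that $\pd K = \{x : \langle Ax, x\rangle = 1\}$. A vector $v$ is tangent to $\pd K$ at a point $x \in \pd K$ precisely when $\langle Ax, v\rangle = 0$ (the boundary is smooth, so the one-sided tangent cone from Definition~\ref{d:forward tangent} is the genuine tangent hyperplane). Thus the conclusion we want — $R_\la(x)$ tangent to $\pd K$ for every $x \in \pd K$ — is the statement that the bilinear form $(x,x) \mapsto \langle A R_\la(x), x\rangle$ vanishes on $\pd K$, equivalently that the symmetric operator $A R_\la + R_\la^{\mathsf t} A$ is a scalar multiple of $A$; combined with $\tr R_\la = 0$ this scalar must be $0$, i.e. $R_\la$ is skew-symmetric with respect to $\langle A\,\cdot\,, \cdot\rangle$. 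After an affine change of coordinates we may as well take $A = \id$, so $K$ is the round ball, $\pd K = S^2$, and "$v$ tangent at $x$" means $\langle x, v\rangle = 0$; the goal becomes: $\langle R_\la(x), x\rangle = 0$ for all $x \in S^2$.

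Next I would unwind hypothesis \eqref{e:tangency assumption} in these coordinates. It says: for every $\la \in X^*$ and every $x \in S^2$ with $\la(x) = 0$, we have $\langle R_\la(x), x\rangle = 0$. Writing $\la$ via the inner product as $\la(\cdot) = \langle a, \cdot\rangle$ for $a \in X$, and setting $q_a(x) := \langle R_\la(x), x\rangle$ — a quadratic form in $x$ depending linearly on $a$ — the hypothesis reads: $q_a$ vanishes on the great circle $S^2 \cap a^\perp$ for every $a$. The conclusion is that $q_a \equiv 0$ on all of $S^2$ for every $a$. So the whole lemma is the following elementary algebraic fact: if a quadratic form on $\R^3$ vanishes on a plane through the origin, then... it need not vanish identically (e.g. $q(x) = \langle a, x\rangle \ell(x)$ for any linear $\ell$). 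The point is that we additionally know $q_a$ depends \emph{linearly} on $a$ and $\tr R_\la = 0$. This is where the real content sits.

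So the key step is to show: a map $a \mapsto q_a$, linear in $a \in \R^3$, valued in quadratic forms on $\R^3$, such that $q_a$ vanishes on $a^\perp$ for every $a$ and such that the trace condition holds, must be identically zero. Vanishing of $q_a$ on $a^\perp$ forces $q_a(x) = \langle a, x\rangle \,\ell_a(x)$ for a linear form $\ell_a$ depending (one checks) linearly on $a$; write $\ell_a(x) = \langle Ma, x\rangle$ or rather $\ell_a(x) = x^{\mathsf t} M a$ for some $3\times 3$ matrix $M$, so $q_a(x) = (a^{\mathsf t}x)(x^{\mathsf t} M a)$. Hence $\langle R_{\langle a,\cdot\rangle}x, x\rangle = (a^{\mathsf t} x)(x^{\mathsf t} M a)$, which determines the symmetric part of $R_\la$ as a function of $\la$; symmetrizing in $x$ shows $R_\la + R_\la^{\mathsf t}$ is the symmetric part of $x \mapsto $ (something rank-one-ish in $a$), and then $\tr R_\la = 0$ pins down the remaining freedom. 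I expect this to collapse quickly: taking the trace of the relevant symmetric matrix gives a linear condition on $M$ forced to hold identically in $a$, killing $M$, hence $q_a = 0$. The main obstacle is purely bookkeeping — organizing the linear-algebra identity $q_a(x) = (a^{\mathsf t}x)(x^{\mathsf t}Ma)$ and tracking how $\tr R_\la = 0$ translates into a condition on $M$ — rather than anything conceptually deep; the lemma is essentially the $3$-dimensional shadow of the general principle that a trace-free linear vector field tangent to an ellipsoid is infinitesimally isometric, restricted here to the information actually supplied by \eqref{e:tangency assumption}.

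Finally, with $q_a \equiv 0$ for all $a$, we get $\langle R_\la(x), x\rangle = 0$ for all $\la$ and all $x \in S^2$, hence for all $x$ by homogeneity, which is exactly the assertion that $R_\la(x)$ is tangent to $\pd K$ everywhere. Translating back through the affine change of coordinates that reduced $A$ to $\id$ completes the proof of Lemma~\ref{l:Rtangent for ellipsoid}.
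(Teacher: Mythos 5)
Your overall strategy -- reduce to the round ball, encode everything in the quadratic forms $q_a(x)=\langle R_{\langle a,\cdot\rangle}(x),x\rangle$, use the hypothesis to constrain their structure, and kill the remainder with the trace condition -- is sound, but the middle step as written is wrong, and the error matters. Since $R_\lambda$ is linear in $\lambda$, the form $q_a$ is \emph{linear} in $a$; your claimed factorization $q_a(x)=(a^{\mathsf t}x)(x^{\mathsf t}Ma)$ is quadratic in $a$, so ``$\ell_a$ depends linearly on $a$'' cannot be right. Worse, if one runs your final step with that parametrization, the trace of the symmetric operator representing $(a^{\mathsf t}x)(x^{\mathsf t}Ma)$ is $a^{\mathsf t}Ma$, and its vanishing for all $a$ kills only the symmetric part of $M$ -- the argument does not close. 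The correct statement, which does require a short argument (write $q_a=\sum_i a_iQ_i$, note $Q_i=x_ic_i(x)$ by divisibility, and use divisibility for $a=e_i+te_j$ together with unique factorization to force $c_i=c_j$), is that $\ell_a=c$ is a \emph{fixed} linear form independent of $a$, so $q_a(x)=\langle a,x\rangle c(x)$. Then $\tr R_\lambda=\langle a,c\rangle=0$ for all $a$ gives $c=0$, and your proof is complete. So the gap is repairable, but the repair is exactly the step you dismissed as bookkeeping.

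For comparison, the paper avoids the factorization question entirely by arguing pointwise: fix $v\in\partial K$ with $|v|=1$, take an orthonormal basis $e_1,\dots,e_n$ with $e_n=v$ and dual basis $f_1,\dots,f_n$. The hypothesis \eqref{e:tangency assumption} directly gives $\langle R_{f_i}(e_n),e_n\rangle=0$ for $i\le n-1$ (since $f_i(e_n)=0$) and $\langle R_{f_n}(e_i),e_i\rangle=0$ for $i\le n-1$ (since $f_n(e_i)=0$), and then $\tr R_{f_n}=\sum_i\langle R_{f_n}(e_i),e_i\rangle=0$ yields the missing diagonal entry $\langle R_{f_n}(e_n),e_n\rangle=0$. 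By linearity in $\lambda$ this handles all $\lambda$, with no analysis of the global structure of $a\mapsto q_a$ needed.
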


\begin{proof}
Let $\langle\cdot,\cdot\rangle$ be the inner product associated to~$K$,
and $|\cdot|$ the respective Euclidean norm.
We have to show that, under the assumptions of Proposition \ref{prop:Rtangent},
$\langle R_\la(v),v\rangle=0$ for all $\la\in X^*$ and $v\in X$ with $|v|=1$.
Fix $v\in X$ such that $|v|=1$ and observe that the expression
$\langle R_\la(v),v\rangle$ is linear in $\la$, therefore
it suffices to verify the identity for all $\la$ from some basis of $X^*$.
Pick an orthonormal basis $e_1,\dots,e_n$ in $X$
such that $e_n=v$, and let $f_1,\dots,f_n\in X^*$ be the dual basis.
Then $\langle R_{f_i}(e_n),e_n\rangle=0$ for all $i\le n-1$
by the assumption (2) of Proposition \ref{prop:Rtangent}.
Similarly, $\langle R_{f_n}(e_i),e_i\rangle=0$ for all $i\le n-1$.
Since $\tr R_{f_n}=0$, it follows that $\langle R_{f_n}(e_n),e_n\rangle=0$.
Thus $\langle R_{f_i}(v),v\rangle=\langle R_{f_i}(e_n),e_n\rangle=0$
for all $i=1,\dots,n$ and the lemma follows.
\end{proof}

The proof of Proposition \ref{prop:Rtangent} occupies
Sections \ref{sec:degenerate} and~\ref{sec:non-degenerate}.
We divide the problem into degenerate and non-degenerate cases
(Definition~\ref{d:degenerate point}) and handle them using different structures
arising from the tensor~$R$.
For the degenerate case (Section~\ref{sec:degenerate})
we construct a collection of quadratic vector fields tangent to $\pd K$
(Lemma \ref{l:VRtangent}) and analyze them by algebraic methods.
The degeneration implies certain cancellations that lead to the desired
conclusion (Proposition \ref{p:all degenerate cases}).

The non-degenerate case (Section \ref{sec:non-degenerate}) is the most interesting one.
In this case $R$ determines $K$ uniquely up to a homothety
and we show that $K$ is an ellipsoid.
The key construction is given in Lemma~\ref{l:flow of planes}, which shows that $R$
induces a flow on the space of 2-dimensional cross-sections of~$K$
with trajectories consisting of linearly equivalent cross-sections.
Furthermore, the non-degeneracy assumption implies that
cross-sections corresponding to fixed points of the flow are ellipses.
Then, with some help from two-dimensional topological dynamics (Lemma~\ref{l:fixedpts}),
we deduce that $K$ has a one-parameter family of ellipses among its cross-sections.
The way how $R$ determines $K$ implies that $\pd K$ is a real analytic surface, and then is is not hard to show
that $K$ is an ellipsoid and finish the proof,
see the end of Section \ref{sec:non-degenerate}.

\subsection{Proof of Theorem \ref{t:main} assuming
Propositions \ref{prop:Rexists} and \ref{prop:Rtangent}.}
\label{subsec:proof main}

The final step of the proof of Theorem \ref{t:main} is to deduce
it from the two above propositions. 

Let $B\subset V=\R^4$ be a convex body satisfying the assumptions of Theorem \ref{t:main}.
As explained after Problem \ref{problem with body},
$B$ is symmetric with respect to~0.
Let $\|\cdot\|$ be the norm with unit ball~$B$.

Let $X$, $\nu$ and $R$ be as in Proposition \ref{prop:Rexists},
and let $K=B\cap X$ be the corresponding cross-section.
As shown in \S\ref{subsec:plan main}, $K$ and $R$ satisfy \eqref{e:tangency assumption}
and therefore Proposition \ref{prop:Rtangent} applies.
Fix $x\in \pd K$ and pick $\la\in X^*$ such that $\la(x)\ne 0$.
Let $w_1=R_\la(x)$ where $R_\la=R(\la)$ and $w_2=w_1+\la(x)\nu$.
Propositions \ref{prop:Rexists} and \ref{prop:Rtangent} imply that
both $w_1$ and $w_2$ are tangent to $\pd B$ at~$x$.
Since $\nu$ is a linear combination of $w_1$ and $w_2$,
namely $\nu = \frac1{\la(x)}(w_1-w_2)$,
Lemma \ref{l:tangent linear} implies that 
$\nu$ is also tangent to $\pd B$ at~$x$.

Since $x$ is an arbitrary point of $\pd K=\pd B\cap X$,
it follows that $B$ is contained in the cylinder $K+\R\nu$.
Let $P_X\co V\to X$ be the projection along $\nu$,
that is, $P_X$ is the unique linear map from $V$ to $X$
such that $P_X|_X=\id_X$ and $P_X(\nu)=0$.
The fact that $B\subset K+\R\nu$ implies that $P_X(B)\subset B$.
Equivalently, the projector $P_X$ does not increase the norm,
that is, $\|P_X(v)\|\le \|v\|$ for all $v\in V$.

Recall that $X$ in Proposition \ref{prop:Rexists}
can be chosen arbitrarily from a set of full measure in $\Gr_3(V)$
and observe that the existence of a norm non-increasing projector
is a closed condition. 
Thus for every hyperplane $X\in\Gr_3(V)$
there exist a projector $P_X\co V\to X$ onto $X$
not increasing the norm.

To finish the proof we apply the Blaschke-Kakutani
characterization of ellipsoids \cite{Kak}.
We use the following formulation that can be found in e.g.~\cite{Gruber}:

\begin{theorem}[{\cite{Kak}, \cite{Gruber}*{Theorem 12.5}}]
\label{t:kakutani}
Let $(V,\|\cdot\|)$ be a finite-dimensional Banach space
and $2\le k < \dim V$.
Suppose that for every $k$-dimensional linear subspace
$X\in\Gr_k(V)$
there exists a linear projector $P_X$ from $V$ onto $X$
not increasing the norm. Then the norm $\|\cdot\|$ is Euclidean.
\end{theorem}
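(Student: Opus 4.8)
The plan is to turn the hypothesis into a statement about cross-sections of the unit ball, reduce to three dimensions, and there argue via the maximal inscribed ellipsoid.

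First I would reformulate. Writing $B$ for the unit ball of $\|\cdot\|$, a linear projector $P\co V\to X$ with kernel $N$ is norm-nonincreasing if and only if $B\subseteq(B\cap X)+N$: if $\|P\|\le 1$, every $v\in B$ splits as $P(v)\in B\cap X$ plus $v-P(v)\in N$; conversely, writing $v/\|v\|$ in the form $y+z$ with $y\in B\cap X$, $z\in N$ gives $P(v)=\|v\|\,y$, so $\|P(v)\|\le\|v\|$. Thus the hypothesis says exactly that for every $k$-dimensional subspace $X$ the body $B$ lies in the cylinder over its central section $B\cap X$ with generators in some complementary direction space $N_X$. Next I would reduce to $\dim V=3$ and $k=2$. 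If $X=\im P$ for a norm-nonincreasing projector $P\co V\to X$ and $U\supseteq X$ is any subspace, then $P|_U\co U\to X$ is again a norm-nonincreasing projector; hence for $k=2$ every $3$-dimensional subspace $U$ of $V$ inherits the property that all its $2$-planes are $1$-complemented in $U$. Granting the theorem in dimension $3$, each such $U$ is Euclidean, so every $2$-plane of $V$ is, and the parallelogram law finishes it; the general case $2\le k\le\dim V-1$ is reached from $k=2$ by the same restriction device.

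So assume $\dim V=3$ and that every $2$-plane through $0$ is $1$-complemented. Let $E\subseteq B$ be the ellipsoid of maximal volume inside $B$, let $\langle\cdot,\cdot\rangle$ be the inner product with unit ball $E$, and set $|v|=\langle v,v\rangle^{1/2}$, so that $\|v\|\le|v|$ everywhere. By John's theorem there are contact points $u_1,\dots,u_m\in\pd E\cap\pd B$ spanning $V$ and weights $c_i>0$ with $\sum_i c_i\,u_i\otimes u_i=\id$; moreover, from $E\subseteq B$ one has the normal-cone inclusion $N_B(u_i)\subseteq N_E(u_i)=\R_{\ge 0}u_i$, so each $u_i$ is a smooth point of $\pd B$ with unique unit supporting functional $\langle\cdot,u_i\rangle$ and $\|u_i\|=|u_i|=1$. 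Combining smoothness with the cylinder condition gives a first rigidity: if a plane $W$ contains a contact point $u_i$, then $u_i$ lies on the lateral boundary of the cylinder $C=(B\cap W)+N_W\supseteq B$, so any supporting functional of $C$ at $u_i$ lies in $N_B(u_i)=\R_{\ge 0}\langle\cdot,u_i\rangle$ and, coming from a cylinder with generators $N_W$, kills $N_W$; hence $N_W\perp u_i$ with respect to $\langle\cdot,\cdot\rangle$. In particular $N_W=W^{\perp}$ whenever $W$ is spanned by two contact points, i.e. the \emph{orthogonal} projection onto such a $W$ is norm-nonincreasing.

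The endgame, once one knows that the orthogonal projection onto \emph{every} plane is norm-nonincreasing, is short. That condition reads $\|v-\langle v,u\rangle u\|\le\|v\|$ for every unit $u$; substituting $v\mapsto v+su$ turns it into $\|w\|\le\|w+tu\|$ for all $t$ whenever $w\perp u$, i.e. $\langle\cdot,\cdot\rangle$-orthogonality implies Birkhoff--James orthogonality. Then at any smooth point $w$ of $\pd B$ the unique subgradient of $\|\cdot\|$ annihilates $w^{\perp}$, so it equals $\tfrac{\|w\|}{|w|^2}\langle\cdot,w\rangle$ (the coefficient fixed by $1$-homogeneity), i.e. the $\langle\cdot,\cdot\rangle$-gradient of $\|\cdot\|$ is $\tfrac{\|w\|}{|w|^2}w$ almost everywhere. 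Consequently the locally Lipschitz function $w\mapsto\|w\|^2/|w|^2$ has vanishing gradient almost everywhere, hence is constant on $V\setminus\{0\}$; as $\|u_i\|=|u_i|$ that constant is $1$, so $\|\cdot\|=|\cdot|$ and $B=E$ is an ellipsoid.

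The hard part will be the missing step: upgrading ``$N_W\perp W$ for planes spanned by contact points'' (a measure-zero set of planes) to ``$N_W$ can be taken $\perp W$ for every plane''. This is the heart of Blaschke's classical argument: for an ellipsoid the admissible generator direction over a central section is precisely the conjugate direction, so the assignment $W\mapsto N_W$ is rigidly forced, and to prove it one must propagate the orthogonality relation from the contact locus to all of $\Gr_2(V)$ by a continuity-and-rigidity argument over the Grassmannian. I expect this propagation to be the principal obstacle; it is of the same differential-geometric flavour as Propositions~\ref{prop:Rexists}--\ref{prop:Rtangent} of the present paper, which is precisely why the statement is quoted here rather than reproved.
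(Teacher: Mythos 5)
The paper does not actually prove this statement --- it is quoted from Kakutani and from Gruber's book --- so the only question is whether your blind attempt is a complete proof. It is not, and the gap is the one you flag yourself at the end. The cylinder reformulation, the John-ellipsoid setup, the observation that any admissible kernel $N_W$ must be $\langle\cdot,\cdot\rangle$-orthogonal to every contact point lying in $W$, and the endgame (orthogonal projections norm-nonincreasing for \emph{all} planes $\Rightarrow$ the locally Lipschitz function $w\mapsto\|w\|^2/|w|^2$ has a.e.\ vanishing gradient $\Rightarrow$ it is constant) are all sound. But the bridge between them --- upgrading ``$N_W=W^\perp$ for planes spanned by two contact points'' to ``the orthogonal projection onto \emph{every} plane is norm-nonincreasing'' --- is precisely the content of the theorem, and no argument is offered. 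John's decomposition guarantees only finitely many contact points, so the planes you control form a finite (hence nowhere dense) subset of $\Gr_2(V)$; continuity alone cannot propagate the conclusion, and for a non-smooth $B$ the admissible kernel over a given plane need not be unique, so there is not even a well-defined map $W\mapsto N_W$ to which a rigidity argument could be applied without substantial further work. As written, the argument proves the theorem only under a strictly stronger hypothesis than the one assumed.

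There is a second, independent defect: the reduction does not reach the case in which the theorem is applied in this paper. Restricting a norm-nonincreasing projector $P_X$ to overspaces $U\supseteq X$ reduces the hypothesis for $k$-planes to the \emph{hyperplane} case in dimension $k+1$, which coincides with your three-dimensional base case only when $k=2$. For $k=\dim V-1$ --- e.g.\ $\dim V=4$, $k=3$, which is exactly how Theorem~\ref{t:kakutani} is invoked in \S\ref{subsec:proof main} --- there are no proper overspaces to restrict to, and your argument never leaves dimension $3$. You would need either to run the John-ellipsoid argument for hyperplane projections in every dimension $\ge 3$ (plausible, since little in your sketch is specific to $\dim V=3$ beyond notation, but it must be said), or to exhibit a genuine descent from $k$-planes to $(k-1)$-planes; neither is provided.
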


Applying Theorem \ref{t:kakutani} to $V=\R^4$,
the norm $\|\cdot\|$ associated to $B$
and $k=3$, we conclude that $B$ 
is an ellipsoid and complete the proof of Theorem \ref{t:main}.
\qed

\section{Vector fields tangent to convex surfaces}
\label{sec: polynomial vector fields}

We often interpret maps from a vector space $X$ to itself as vector fields on~$X$.
A map $W\co X\to X$ is called a
\textit{homogeneous polynomial vector field of degree $k$} if the coordinates
of $W(x)$, $x\in X$, are homogeneous polynomials
of degree $k$ in coordinates of~$x$.
Clearly this property does not depend on the choice of coordinates.
Homogeneous polynomial vector fields of degree 1, 2 and 3 are
called {\it linear}, {\it quadratic} and {\it cubic}, respectively.

A \textit{trajectory} of a vector field $W\co X\to X$ is a curve
$\ga\co(a,b) \subseteq \R \to X$
such that $\dot\ga(t) = W(\ga(t))$ for all $t\in(a,b)$.
We say that a vector field $W$ is (forward) tangent to $\pd K$,
where $K\subset X$ is a convex body, if $W(x)$ is (forward) tangent
to $\pd K$ at $x$ for every $x\in\pd K$.

In this section we prove a number of technical facts
about polynomial vector fields tangent to convex hypersurfaces.
The most essential ones are Lemma \ref{l:cubic vector field in the plane} and Lemma \ref{l:fixedpts}.

\begin{lemma}\label{l:tangent vector field}
Let $X$ be a vector space, $K\subset X$ a convex body with 0 in the interior,
and $\Psi$ the Minkowski norm associated to~$K$.
Let $W\co X\to X$ be a homogeneous polynomial vector field
forward tangent to $\pd K$.
Then $\Psi$ is constant along every trajectory of~$W$.
\end{lemma}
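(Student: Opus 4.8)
The plan is to show that along any trajectory $\gamma$ of $W$, the composition $t \mapsto \Psi(\gamma(t))$ has one-sided derivative zero everywhere, and then conclude by a standard monotonicity argument. First I would reduce to the relevant geometry: fix a trajectory $\gamma\colon(a,b)\to X$. If $\gamma$ ever hits $0$ it is constant there (since $W$ is a polynomial vector field vanishing at $0$, as it is homogeneous of positive degree), so we may assume $\gamma$ avoids $0$; equivalently $\Psi(\gamma(t))>0$ throughout. Now by positive $1$-homogeneity of $\Psi$ we can rescale: for $t$ fixed, write $x=\gamma(t)$ and $v=\dot\gamma(t)=W(x)$, and compute the one-sided derivative of $s\mapsto \Psi(\gamma(s))$ at $s=t$ from the right. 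The key point is that $\Psi$ is locally Lipschitz (being convex and finite), so $\Psi\circ\gamma$ is locally Lipschitz, hence differentiable almost everywhere and equal to the integral of its derivative; to run that argument cleanly it suffices to control the one-sided derivative.

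The heart of the matter is the infinitesimal computation. Since $W$ is forward tangent to $\pd K$, for the point $x/\Psi(x)\in\pd K$ the vector $W(x/\Psi(x))$ is forward tangent to $\pd K$ there, so by Lemma~\ref{l:tangent cone} we have $\pd^+_{x/\Psi(x)}\Psi\bigl(W(x/\Psi(x))\bigr)=0$. I then want to transport this to the point $x$ itself. Using that $W$ is homogeneous of degree $k$, one has $W(x) = \Psi(x)^k\,W(x/\Psi(x))$, and using that $\Psi$ is $1$-homogeneous one checks the scaling relation $\pd^+_{cx}\Psi(cv) = c\,\pd^+_x\Psi(v)$ for $c>0$ together with the fact that $\pd^+_x\Psi$ depends on $x$ only through $x/\Psi(x)$ in a controlled way (more precisely $\pd^+_{x}\Psi(v) = \Psi(x)^{0}\cdot$ something homogeneous); combining these gives $\pd^+_x\Psi\bigl(W(x)\bigr) = 0$ as well. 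Hence for each $t$,
$$
\limsup_{s\downarrow 0}\frac{\Psi(\gamma(t+s))-\Psi(\gamma(t))}{s} \le \pd^+_{\gamma(t)}\Psi\bigl(\dot\gamma(t)\bigr) = \pd^+_{\gamma(t)}\Psi\bigl(W(\gamma(t))\bigr) = 0,
$$
where the first inequality uses that $\Psi$ is locally Lipschitz and $\gamma$ is differentiable, so $\gamma(t+s) = \gamma(t) + sW(\gamma(t)) + o(s)$ and the $o(s)$ term is absorbed by the Lipschitz constant. Thus $\Psi\circ\gamma$ has nonpositive upper right derivative everywhere, so it is non-increasing on $(a,b)$.

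Finally I would remove the remaining asymmetry. The argument above, applied verbatim, only gives that $\Psi\circ\gamma$ is non-increasing. To get that it is constant, note that $-W$ generates the time-reversed trajectories, but $-W$ need not be forward tangent to $\pd K$, so this does not immediately apply. Instead I would run the same estimate in the other time direction: for the lower left derivative, $\Psi(\gamma(t-s)) = \Psi(\gamma(t) - sW(\gamma(t)) + o(s))$, and convexity of $\Psi$ gives $\Psi(\gamma(t))\le \tfrac12\Psi(\gamma(t+s))+\tfrac12\Psi(\gamma(t-s)) + o(s)$ locally, which combined with $\Psi\circ\gamma$ being non-increasing (hence $\Psi(\gamma(t+s))\le\Psi(\gamma(t))\le\Psi(\gamma(t-s))$) forces $\Psi(\gamma(t-s)) - \Psi(\gamma(t)) = o(s)$ as well, wait — more cleanly, since $\Psi\circ\gamma$ is non-increasing and also, by the symmetric convexity estimate, cannot strictly decrease without the difference quotients detecting it, one gets that $\Psi\circ\gamma$ is both non-increasing and has a.e.\ derivative $\ge$ (via the two-sided convexity bound) $-0$, hence is constant. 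The main obstacle I anticipate is precisely this last step: handling the one-sidedness of "forward tangent" carefully, since $W$ tangent only in the forward sense a priori only controls $\Psi$ from one side; the clean fix is to observe that a continuous non-increasing function which is also mid-point-convex-controlled along $\gamma$ and bounded below by its own value (because trajectories stay in the compact region where $\Psi$ is bounded) must be constant, or alternatively to invoke that both $\gamma|_{(a,t]}$ traversed backward is a trajectory of $-W$ and push the estimate through using only the already-established monotonicity rather than forward tangency of $-W$.
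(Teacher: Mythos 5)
Your main computation is exactly the paper's: forward tangency plus Lemma~\ref{l:tangent cone} give $\pd^+_x\Psi(W(x))=0$ on $\pd K$, homogeneity extends this to all of $X$, and the local Lipschitz property of $\Psi$ lets you differentiate $\Psi\circ\gamma$ from the right along a trajectory. Up to the conclusion that $\Psi\circ\gamma$ is non-increasing, the argument is correct.

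The gap is in your final paragraph, where you try to upgrade ``non-increasing'' to ``constant.'' Neither of your proposed fixes works as written. The midpoint estimate $2\Psi(\gamma(t))\le\Psi(\gamma(t+s))+\Psi(\gamma(t-s))+o(s)$ is satisfied (with equality and without the error term) by any affine decreasing function of $t$, so it cannot rule out strict decrease; and you yourself note that $-W$ need not be forward tangent, so time reversal is unavailable. The repair is much simpler and you already have all the ingredients: $\pd^+_x\Psi(v)$ is a genuine \emph{limit} of the right difference quotients (not just a $\limsup$), since for convex functions the one-sided directional derivative exists; combined with the local Lipschitz bound absorbing the $o(s)$ error in $\gamma(t+s)=\gamma(t)+sW(\gamma(t))+o(s)$, this shows that the right derivative of $\Psi\circ\gamma$ at $t$ \emph{exists and equals} $\pd^+_{\gamma(t)}\Psi(W(\gamma(t)))=0$. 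A continuous (a fortiori locally Lipschitz) function whose right derivative exists and vanishes everywhere is both non-increasing and non-decreasing, hence constant. This is precisely how the paper concludes; there is no genuine one-sidedness obstruction, because Lemma~\ref{l:tangent cone} gives the exact equality $\pd^+_x\Psi(W(x))=0$ rather than an inequality.
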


\begin{proof}
By Lemma \ref{l:tangent cone} we have
$\pd^+_x\Psi(W(x))=0$ for all $x\in\pd K$.
By homogeneity, this identity extends to all $x\in X$.
Let $\ga\co(a,b)\to X$ be a trajectory of $W$.
Then $\pd^+_{\gamma(t)}\Psi(\dot\ga(t))=0$ for all~$t$,
hence the function $\Psi\circ\ga$ has zero right derivative
everywhere. Since this function is locally Lipschitz,
it follows that it is constant.
\end{proof}

The next lemma is a generalization of Lemma \ref{l:tangent vector field}
used in Section \ref{sec:degenerate}.

\begin{lemma} \label{l:ae tangent vector field}
Let $X$ be a vector space, $K\subset X$ a convex body with 0 in the interior,
and $\Psi$ the Minkowski norm associated to~$K$.
Let $W\co X\to X$ be a homogeneous polynomial vector field
and $f\co X\to\R$ a nonzero homogeneous polynomial function.
Assume that the vector field 
$fW$ (that is, $x\mapsto f(x)W(x)$)
is forward tangent to~$\pd K$. 
Then $\Psi$ is constant along every trajectory of~$W$
and therefore $W$ is tangent to $\pd K$.
\end{lemma}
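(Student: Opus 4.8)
The plan is to deduce this from Lemma~\ref{l:tangent vector field} by removing the factor $f$. The obvious difficulty is that $f$ may vanish on a nontrivial algebraic subset of $X$, so we cannot simply divide. The key observation is that the conclusion $\pd^+_x\Psi(W(x))=0$ is required only on the boundary $\pd K$, and we can reach most boundary points by a limiting argument using the points where $f\neq 0$.

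First I would reduce to a pointwise statement. By Lemma~\ref{l:tangent cone} it suffices to show $\pd^+_x\Psi(W(x))=0$ for every $x\in\pd K$; once this is known, by homogeneity of $\Psi$ and $W$ the identity extends to all of $X$, and then the argument of Lemma~\ref{l:tangent vector field} (zero right derivative of the locally Lipschitz function $\Psi\circ\gamma$ along any trajectory $\gamma$) gives that $\Psi$ is constant along trajectories of $W$. Applying the same reasoning with $-W$ in place of $W$ — note $f\cdot(-W)=-fW$ is also forward tangent since forward tangency at a point is preserved under negation only after we know it, so more carefully: once $\pd^+_x\Psi(W(x))=0$ holds everywhere, positive homogeneity gives $\pd^+_x\Psi(-W(x))$ need not vanish, but constancy of $\Psi$ along trajectories of $W$ already forces $W(x)$ to be tangent (not just forward tangent) to $\pd K$ at each $x\in\pd K$, because a trajectory through $x$ stays on the level set $\{\Psi=1\}=\pd K$ and can be traversed in both time directions. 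So the whole lemma follows from the single claim that $\pd^+_x\Psi(W(x))=0$ for all $x\in\pd K$.

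To prove that claim, fix $x\in\pd K$. Since $fW$ is forward tangent to $\pd K$, Lemma~\ref{l:tangent cone} gives $\pd^+_x\Psi(f(x)W(x))=0$. If $f(x)\neq 0$, then by positive $1$-homogeneity of $\pd^+_x\Psi$ we get $\pd^+_x\Psi(W(x))=0$ when $f(x)>0$, and applying the same to $-W(x)$... — instead, the clean route: the set $Z=\{x\in\pd K : f(x)=0\}$ is a proper closed subset of $\pd K$ (since $f$ is a nonzero polynomial and $\pd K$ is the boundary of a convex body, $\pd K\setminus Z$ is dense in $\pd K$). On $\pd K\setminus Z$ we have $f(x)\neq 0$; writing $W(x)=\frac{1}{f(x)}\cdot f(x)W(x)$ and using $1$-homogeneity of $\pd^+_x\Psi$ separately on the open sets $\{f>0\}$ and $\{f<0\}$, we obtain $\pd^+_x\Psi(W(x))=0$ for all $x\in\pd K\setminus Z$. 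Now I would pass to the limit: given $x\in Z$, choose $x_i\in\pd K\setminus Z$ with $x_i\to x$; then $W(x_i)\to W(x)$ by continuity of $W$, and $c_i:=1/\mathrm{(small\ step)}$ — more directly, for each $i$ the vector $W(x_i)$ is tangent to $\pd K$ at $x_i$ (by the constancy-of-$\Psi$-along-trajectories argument already available at those points, or directly by Definition~\ref{d:forward tangent}), and one checks that forward tangency is a closed condition: if $W(x_i)$ is forward tangent to $\pd K$ at $x_i$, $x_i\to x$, $W(x_i)\to W(x)$, then $W(x)$ is forward tangent to $\pd K$ at $x$. Hence $\pd^+_x\Psi(W(x))=0$ on all of $\pd K$, completing the claim.

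The main obstacle is the limiting step, i.e.\ verifying that forward tangency (equivalently, the vanishing of $\pd^+\Psi$) behaves well under the simultaneous limit $x_i\to x$, $W(x_i)\to W(x)$. The function $(x,v)\mapsto\pd^+_x\Psi(v)$ is not continuous in general (it is only upper semicontinuous in suitable senses), so I would argue this directly from the Bouligand definition: for each $i$ pick sequences realizing forward tangency of $W(x_i)$ at $x_i$ and extract a diagonal subsequence, or more cleanly use that $\pd^+_x\Psi(v)\ge 0$ automatically for $x\in\pd K$ (since $\Psi(x)=1$ is the minimum of $\Psi$ on the ray, no — rather $x+sv$ may leave $K$) and that $\limsup_i \pd^+_{x_i}\Psi(W(x_i))\ge \pd^+_x\Psi(W(x))$ fails in the wrong direction, so the correct inequality to establish is $\pd^+_x\Psi(W(x))\le \liminf_i \pd^+_{x_i}\Psi(W(x_i))$ using convexity; combined with $\pd^+_x\Psi(W(x))\ge 0$ this pins it to $0$. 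This semicontinuity estimate, which is a standard property of directional derivatives of convex functions, is the one technical point that needs care; everything else is a direct application of the already-established Lemmas~\ref{l:tangent cone} and~\ref{l:tangent vector field}.
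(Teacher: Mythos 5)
Your reduction to the pointwise claim ``$\pd^+_x\Psi(W(x))=0$ for all $x\in\pd K$'' is a legitimate strategy, but the two steps you use to prove that claim both contain genuine gaps. First, the sign issue: $\pd^+_x\Psi$ is only \emph{positively} $1$-homogeneous, so at a point with $f(x)<0$ the hypothesis $\pd^+_x\Psi(f(x)W(x))=0$ gives $\pd^+_x\Psi(-W(x))=0$, i.e.\ forward tangency of $-W(x)$, and since $\pd^+_x\Psi$ is not odd you cannot convert this into $\pd^+_x\Psi(W(x))=0$ on the set $\{f<0\}$. Second, and fatally, the limiting step: forward tangency is \emph{not} a closed condition under $x_i\to x$, $W(x_i)\to W(x)$ for a general convex body. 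Take $K=\{\max(|x_1|,|x_2|)\le 1\}\subset\R^2$ and the constant vector $v=(1,0)$: it is forward tangent to $\pd K$ at every point $(t,1)$ with $|t|<1$, but $\pd^+_{(1,1)}\Psi(v)=1\ne 0$, so it is not forward tangent at the corner $(1,1)$. The same example shows that $(x,v)\mapsto\pd^+_x\Psi(v)$ (an infimum over $s>0$ of continuous functions, hence upper semicontinuous) is \emph{not} lower semicontinuous, and the inequality you single out as the needed technical point, $\pd^+_x\Psi(W(x))\le\liminf_i\pd^+_{x_i}\Psi(W(x_i))$, is exactly the false direction; upper semicontinuity only yields $\pd^+_x\Psi(W(x))\ge 0$, the useless half. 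Since at points of $Z=\{f=0\}\cap\pd K$ the hypothesis carries no information at all ($f(x)W(x)=0$), no purely pointwise limiting argument of this kind can close the gap.

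The paper's proof avoids both problems by arguing along trajectories of $W$ instead of at individual boundary points. For a trajectory $\ga$ with $f\circ\ga\not\equiv 0$, real analyticity makes the zero set of $f\circ\ga$ discrete, and on each complementary interval $\ga$ is a reparametrization of a trajectory of $fW$, so Lemma~\ref{l:tangent vector field} gives constancy of $\Psi\circ\ga$ there; one then passes through the zeros using mere \emph{continuity} of $\Psi\circ\ga$, which is true, rather than lower semicontinuity of $\pd^+\Psi$, which is not. Constancy along a trajectory is also insensitive to the orientation reversal caused by $f<0$, which disposes of the sign issue. Trajectories lying entirely in $\{f=0\}$ are handled separately by perturbing the initial point into $\{f\ne 0\}$ and using continuous dependence of ODE solutions. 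You would need to restructure your argument along these lines; the pointwise route cannot be repaired by semicontinuity alone.
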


\begin{proof}
By Lemma~\ref{l:tangent vector field}, $\Psi$ is constant along the trajectories of~$fW$,
but trajectories of $W$ can be different because of zeroes of~$f$.
Let $\ga\co(a,b)\to X$ be a trajectory of~$W$ and consider the set
$$
 T := \{ t\in(a,b) : f(\gamma(t))=0 \}.
$$
Since $f\circ\gamma$ is a real analytic function, 
we have either $T=(a,b)$ or $T$ is a discrete subset of $(a,b)$.

First consider the case when $f\circ\ga$ is not everywhere zero on $(a,b)$.
Then $T$ is discrete: for any $t_0\in(a,b)$ there exists $\ep>0$
such that $f\circ\ga$ has no zeroes on $(t_0-\ep,t_0)\cup(t_0,t_0+\ep)$.
This implies that $\ga|_{(t_0-\ep,t_0)}$ and $\ga|_{(t_0,t_0+\ep)}$
coincide with trajectories of $fW$ up to a change of parametrization.
By Lemma \ref{l:tangent vector field} applied to $fW$
it follows that $\Psi\circ\ga$ is constant on
each of the intervals $(t_0-\ep,t_0)$ and $(t_0,t_0+\ep)$.
By continuity both constants equal $\Psi(\ga(t_0))$,
therefore $\Psi\circ\ga$ is constant on $(t_0-\ep,t_0+\ep)$.
Thus we have shown that $\Psi\circ\ga$
is locally constant and hence constant on $(a,b)$.

Now consider the case when $f\circ\ga\equiv 0$.
Fix $t_0\in(a,b)$ and let $p=\ga(t_0)$.
Since $f$ is a nonzero polynomial, the set of its zeroes
is nowhere dense in~$X$.
Hence there is a sequence $\{p_i\}\subset X$
such that $p_i\to p$ and $f(p_i)\ne 0$ for all~$i$.
Let $\ga_i\co(a_i,b_i) \to X$ be the maximal trajectory of $W$
with initial data $\ga_i(t_0)=p_i$.
Then $\ga_i$ pointwise converge to $\ga$ on $(a,b)$.
Since $f(p_i)\ne 0$, each $\ga_i$ falls under
the above case where $f\circ\ga_i$ is not everywhere zero,
thus $\Psi\circ\ga_i$ is constant for every~$i$.
Passing to the limit as $i\to\infty$ we conclude that
$\Psi\circ\ga$ is constant as well.

To prove that $W$ is tangent to $\pd K$, consider $x\in\pd K$
and let $\ga\co(-\ep,\ep)\to X$ be a trajectory of $W$
with initial data $\ga(0)=x$. Since $\Psi\circ\ga$ is constant,
we have $\ga(t)\in\pd K$ for all~$t$.
Applying Definition \ref{d:forward tangent} to
sequences $x_i=\ga(\frac 1i)$ and $c_i=i$, $i=1,2,\dots$,
yields that $W(x)=\dot\ga(0)$ is forward tangent to $\pd K$ at~$x$.
Considering $x_i=\ga(-\frac 1i)$ similarly shows that $-W(x)$
is forward tangent to $\pd K$ at $x$.
Thus $W$ is tangent to $\pd K$.
\end{proof}

\begin{lemma}\label{l:linear tangent vector field}
Let $X$ be a vector space, $K\subset X$ a convex body with 0 in the interior,
and $L\co X\to X$ a linear vector field forward tangent to $\pd K$.
Then $\tr L=0$ and $e^{tL}\in\I(K)$ for all $t\in\R$,
where $e^{tL}$ is the matrix exponential of $tL$
and $\I(K)$ is the group of linear self-equivalences of $K$,
see \eqref{e:defI(K)}.
\end{lemma}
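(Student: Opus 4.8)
Lemma~\ref{l:linear tangent vector field} follows quickly from the two preceding lemmas. First I would apply Lemma~\ref{l:tangent vector field} with $W = L$: since $L$ is a homogeneous polynomial vector field of degree~$1$ and is forward tangent to $\pd K$, the Minkowski norm $\Psi$ associated to $K$ is constant along every trajectory of~$L$. The trajectories of the linear vector field $L$ are exactly the curves $t\mapsto e^{tL}x_0$, $x_0\in X$. So for every $x_0\in X$ and every $t\in\R$ we get $\Psi(e^{tL}x_0)=\Psi(x_0)$, which is precisely the statement that $e^{tL}$ preserves $\Psi$, hence preserves its unit ball~$K$; that is, $e^{tL}\in\I(K)$ for all $t\in\R$.

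It remains to show $\tr L=0$. Once we know $e^{tL}\in\I(K)$ for all $t$, each $e^{tL}$ is a linear self-equivalence of the convex body $K$, hence $|\det e^{tL}|=1$ because a linear map scales volume by the absolute value of its determinant and $K$ has nonzero finite volume which is preserved. Since $\det e^{tL}=e^{t\tr L}$, this gives $e^{t\tr L}=1$ for all $t\in\R$ (the determinant is positive here as $e^{tL}$ is in the identity component), and therefore $\tr L=0$.

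\textbf{Main obstacle.} There is essentially no obstacle: the lemma is a direct consequence of Lemma~\ref{l:tangent vector field} applied to the linear field $L$, together with the elementary volume/determinant computation for the trace. The only point requiring a word of care is the passage from "$\Psi$ constant along trajectories" to "$e^{tL}$ preserves $\Psi$", which uses that the trajectories of a linear vector field are the orbits of the one-parameter group $\{e^{tL}\}$ and that these orbits cover all of $X$ (so that the identity $\Psi\circ e^{tL}=\Psi$ holds pointwise on $X$, not just on $\pd K$). After that, $\tr L=0$ is immediate from $\det e^{tL}=e^{t\tr L}=1$.
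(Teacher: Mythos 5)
Your proposal is correct and follows essentially the same route as the paper: apply Lemma~\ref{l:tangent vector field} to the linear field $L$ to conclude that $\Psi$ is constant along the trajectories $t\mapsto e^{tL}x_0$, hence $e^{tL}(K)=K$, and then deduce $\tr L=0$ from volume preservation via $\det e^{tL}=e^{t\tr L}=1$. The only (harmless) difference is that you phrase the intermediate step as "$e^{tL}$ preserves $\Psi$" while the paper says "every trajectory is either contained in $K$ or disjoint from $K$"; these are the same observation.
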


\begin{proof}
Recall that trajectories of $L$ have the form $t\mapsto e^{tL}(p)$
where $p\in X$ is the initial value.
By Lemma \ref{l:tangent vector field},
every trajectory is either contained in~$K$ or disjoint with~$K$.
Therefore $e^{tL}(K)=K$ for all $t\in\R$, thus $e^{tL}\subset\I(K)$.
Since $e^{tL}(K)=K$, the operator $e^{tL}$ is volume-preserving,
therefore $\det e^{tL}=1$ and $\tr L=0$.
\end{proof}

\begin{lemma} \label{l:cubic vector field in the plane}
Let $K\subset\R^2$ be a convex body with $0$ in the interior
and $W$ a nonzero homogeneous polynomial vector
field on $\R^2$ of degree $\deg W\le 3$. 
Suppose that $W$ is tangent to $\pd K$.
If $\deg W = 3$, assume in addition that $W$ vanishes at some point
other than the origin.
Then $K$ is an ellipse.  
\end{lemma}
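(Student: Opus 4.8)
The plan is to classify the possible nonzero polynomial vector fields $W$ tangent to $\pd K$ according to their degree and to show that in each case the existence of such a $W$ forces $\pd K$ to be a conic, hence an ellipse (since $K$ is a bounded convex body). First I would handle $\deg W = 1$: by Lemma~\ref{l:linear tangent vector field} we have $\tr W = 0$ and $e^{tW}\in\I(K)$ for all $t$, so $\I(K)$ contains a one-parameter subgroup. Since $\tr W = 0$ and $W\ne 0$, the operator $W$ is (over $\R$) either conjugate to a nonzero rotation generator $\begin{pmatrix}0&-a\\a&0\end{pmatrix}$, a nilpotent $\begin{pmatrix}0&1\\0&0\end{pmatrix}$, or a hyperbolic $\begin{pmatrix}a&0\\0&-a\end{pmatrix}$. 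In the nilpotent and hyperbolic cases the orbits of $e^{tW}$ are unbounded lines or hyperbola branches, which cannot be contained in the bounded set $\pd K$ unless $W = 0$; this contradiction leaves only the rotation case, in which case $K$ is invariant under a genuine circle action (after the linear change making $W$ a standard rotation generator), and a bounded convex body invariant under all rotations about $0$ is a disc in those coordinates, i.e.\ an ellipse in the original ones.

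Next, $\deg W = 2$. Here I would use Lemma~\ref{l:ae tangent vector field} in reverse: every homogeneous quadratic vector field on $\R^2$ vanishes at $0$ and, being quadratic, vanishes on at least one line through the origin (the two coordinate homogeneous quadratics have a common real zero because an odd-degree... ) — more carefully, write $W(x_1,x_2) = (Q_1, Q_2)$ with $Q_i$ homogeneous of degree $2$; the ratio $Q_2/Q_1$, restricted to the unit circle, is a well-defined function except where $Q_1 = 0$, and since $Q_1$ is a binary quadratic it has a real root direction, so $W$ vanishes along some line $\ell = \R u$ unless $Q_1, Q_2$ have no common root, in which case I factor differently. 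The cleanest route is: a homogeneous quadratic field tangent to $\pd K$ vanishes at the two antipodal points $\pm p$ where $\pd K$ meets any line in the zero set of the quadratic form $\det(x, W(x))$ (a homogeneous cubic, hence having a real root direction). Then $W$ has a zero on $\pd K$, and I reduce to the degree-$3$ hypothesis situation. Alternatively, parametrize $\pd K$ by $\Psi = 1$ and observe that along $\pd K$ the field $W$ is tangent, so $W$ generates a flow preserving $\pd K$; a quadratic field on a bounded closed convex curve that is nowhere zero would give a nowhere-zero tangent field and a complete periodic flow, but Lemma~\ref{l:tangent vector field} says $\Psi$ is constant on trajectories, so trajectories foliate $\pd K$ — for a quadratic field this again forces a fixed point by degree considerations. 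In any event one obtains a fixed point $p\in\pd K\setminus\{0\}$.

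It remains to treat $\deg W = 3$ with a zero at some $p\ne 0$ (which now also covers the reductions from lower degree). Normalize coordinates so that $p = (1,0)$, so $W(1,0) = 0$; by homogeneity $W$ vanishes on the whole line $x_2 = 0$, i.e.\ $W(x_1,x_2) = x_2\cdot V(x_1,x_2)$ where $V$ is a homogeneous quadratic field, wait — more precisely each component of $W$ is divisible by $x_2$ only if it vanishes identically on $x_2 = 0$, which holds for the second component automatically (tangency at a smooth point of $\pd K$ on the line) and for the first component by the assumed zero... I would instead argue: since $\pm p\in\pd K$ and $W$ is tangent there, and $W(p)=0$, consider the function $h(x) = \det(x, W(x))$, a homogeneous polynomial of degree $4$ vanishing on $\pd K$ wherever $x$ and $W(x)$ are parallel — but by Lemma~\ref{l:tangent cone} tangency says $\pd_x^+\Psi(W(x)) = 0$, not parallelism. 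The honest approach is the one from \cite{mono}: use that $\Psi$ is constant along trajectories of $W$ (Lemma~\ref{l:tangent vector field}), so $\pd K$ is a union of trajectories; near $p$, in coordinates where $\pd K$ is locally a graph $x_2 = \phi(x_1)$ with $\phi$ convex, the trajectory condition $\dot x_2 = \phi'(x_1)\dot x_1$ combined with $\dot x = W(x)$ being cubic yields an ODE whose analysis (using that $W(p) = 0$ forces $\phi'$ to match the linearization of $W$ at $p$) pins down $\pd K$ as a real-analytic curve; then a nonzero cubic vector field tangent to a closed convex analytic curve, with a zero, must have the curve as an algebraic curve of degree $\le 3$ that is bounded and convex, hence an ellipse. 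I expect this last step — extracting that $\pd K$ is a conic from a cubic tangent field with a prescribed zero — to be the main obstacle; the linear and quadratic cases are soft, but the cubic case genuinely uses the boundedness, convexity, and the extra vanishing hypothesis together, and the cleanest argument likely proceeds by showing the homogeneous quartic $\det(x,W(x))$ must be (a constant times) the square of the quadratic form defining the ellipse, forcing $\Psi^2$ to be that quadratic form.
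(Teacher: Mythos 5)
Your degree-1 case is correct and essentially the paper's argument (the paper invokes compactness of $\I(K)$ inside $\GL(2)$ where you classify traceless $2\times 2$ matrices by hand; both work). Your mechanism for locating a zero of a quadratic field on $\pd K$ is also the right one: the homogeneous cubic $x\wedge W(x)$ vanishes along some line through the origin, and at the intersection of that line with $\pd K$ the vector $W(p)$ is simultaneously radial and tangent, hence zero. (Your first two attempts in that paragraph --- that two binary quadratics have a common real zero, or that a single binary quadratic has a real root direction --- are false, but you discard them yourself.)

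The genuine gap is the step you explicitly flag as ``the main obstacle'': you never pass from ``$W$ is tangent to $\pd K$ and vanishes at some $p\ne 0$'' to ``$K$ is an ellipse'', and the routes you sketch (a local ODE analysis near $p$, or showing $\det(x,W(x))$ is the square of a quadratic form) are not carried out and would be delicate for a merely convex, possibly non-smooth $\pd K$. The missing idea is a degree descent for which you already hold every piece. Homogeneity of degree $d$ gives $W(tp)=t^dW(p)=0$ for all $t$, so both coordinate components of $W$ vanish identically on the line $\R p$ and are therefore divisible by a linear form $\la$ with $\ker\la=\R p$; hence $W=\la\cdot V$ with $V$ a nonzero homogeneous field of degree $d-1$. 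Lemma~\ref{l:ae tangent vector field} --- which you cite only as something to use ``in reverse'' --- says exactly that $V$ is then tangent to $\pd K$. Iterating: a cubic $W$ with a prescribed nonzero zero descends to a tangent quadratic $V$; the $x\wedge V(x)$ argument produces a zero of $V$ on $\pd K$; factoring again descends to a tangent linear field; and your degree-1 case finishes. Your instinct to write $W(x_1,x_2)=x_2\cdot V(x_1,x_2)$ in the cubic case was correct, and the hesitation about which components are divisible by $x_2$ is unfounded, since homogeneity alone forces all components to vanish on the whole line. Note also that your reduction of the quadratic case ``up'' to the cubic hypothesis is the wrong direction; the descent must go down to the linear case, where compactness of $\I(K)$ closes the argument.
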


\begin{proof}
Clearly $\deg W\ne 0$.
Consider the three cases according to the possible values of~$\deg W$.

{\bf Case 1:} $\deg W = 1$.
By Lemma \ref{l:linear tangent vector field}, the self-equivalence
group $\I(K)$ contains a one-parameter subgroup $G = \{e^{tW}\}_{t\in\R}$.
As $\I(K)$ is a compact subgroup of $\GL(2)$, this is possible only if $G$ is conjugate to $SO(2)$
and thus $K$ is an ellipse.

{\bf Case 2:} $\deg W = 2$.
Consider a function $f\co\R^2\to\R$ defined by
$f(x)=x\wedge W(x)$ where the $\wedge$-product
is just the determinant of the $2\times 2$ matrix
composed of vectors $x$ and $W(x)$.
Observe that $f$ is a homogeneous polynomial of degree~3,
in particular, $W(-x)=-W(x)$ for all $x\in\R^2$.
Hence $f$ attains values of opposite signs on $\pd K$,
therefore $f(p)=0$ for some $p\in\pd K$.

The vector $W(p)$ is collinear to $p$
and tangent to $\pd K$ at the same time,
hence $W(p)=0$.
By homogeneity, $W$ vanishes on the entire line $\ell=\{tp\mid t\in\R\}$.
Therefore $W$ can be decomposed as a product $W=\la L$
where $\la\co\R^2\to\R$ is a linear function with $\ker\la=\ell$
and $L\co\R^2\to\R^2$ is a linear vector field.
By Lemma \ref{l:ae tangent vector field}, $L$ is tangent to $\pd K$
and then the result of Case~1 implies that $K$ is an ellipse.

{\bf Case 3:} $\deg W = 3$.
By assumption there exists $p\in\R^2\setminus\{0\}$ such that $W(p)=0$.
As in Case~2, $W$ vanishes at the line $\ell=\{tp\mid t\in\R\}$
and hence $W=\la Q$ where $\la$ is a linear function
and $Q$ is a quadratic vector field.
By Lemma \ref{l:ae tangent vector field}, $Q$ is tangent to $\pd K$
and then the result of Case~2 implies that $K$ is an ellipse.
\end{proof}

In Section \ref{sec:non-degenerate} we will need some facts about quadratic vector fields
on smooth convex surfaces. The following dynamical systems terminology will be handy.

\begin{defn}
Let $S$ be a smooth surface (i.e., a two-dimensional smooth manifold)
and $W$ a complete smooth vector field on~$S$.
An \textit{orbit} of $W$ is a subset of $S$ of the form $\{\ga(t):t\in\R\}$
where $\ga$ is a trajectory of~$W$.
Sets of the form $\{\ga(t):t\ge 0\}$ and $\{\ga(t):t\le 0\}$
are called \textit{half-orbits}
(forward and backward ones, respectively).
A \textit{closed orbit} is an orbit corresponding to a non-constant
periodic trajectory.
By abuse of notation, a \textit{fixed point} of $W$ is the same as a zero of $W$,
i.e., a point $p\in S$ such that $W(p)=0$.
\end{defn}

The following routine lemma is a preparation to Lemma~\ref{l:fixedpts}.

\begin{lemma}\label{l:closed orbit}
Let $W$ be a smooth vector field on a closed smooth surface $S$
and let $p\in S$ be an isolated zero of~$W$.
Assume that $p$ does not belong to the closure of any non-constant
orbit of~$W$.

Then every neighborhood $U$ of $p$ contains a closed orbit
of~$W$ separating $p$ from $S\setminus U$.

Furthermore $\operatorname{ind}_p(W)=1$ where $\operatorname{ind}_p(W)$
is the index of $W$ at~$p$ (as in the Poincar\'e-Hopf Index Theorem,
see e.g.\ \cite{GP}*{Chapter~3, \S5}).
\end{lemma}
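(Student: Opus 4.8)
The plan is to pass to a planar coordinate disk around $p$ and combine the Poincar\'e--Bendixson theorem with a standard index computation, using the hypothesis precisely to keep $p$ out of every limit set. Fix a closed coordinate disk $D$ with $p\in\Int D$ and no zero of $W$ in $D$ besides $p$, and work throughout in the chart containing $D$; since $S$ is closed, $W$ is complete, and we write $\Phi_t$ for its flow. The index input is the following elementary fact: if a simple closed $C^1$ curve $C\subset\Int D$ is a closed orbit, then the winding number of $W$ along $C$ equals the turning number of $C$, because parametrizing $C$ by a periodic trajectory $\ga$ gives $W|_C=\dot\ga$; by Hopf's Umlaufsatz this turning number is $\pm1$, and keeping track of the two possible orientations in which the flow may traverse $C$ one finds it is $+1$ with respect to the boundary orientation of the disk bounded by $C$. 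Two consequences: any closed orbit in $\Int D$ encloses a zero of $W$, hence encloses $p$; and whenever a closed orbit encircling $p$ exists, $\operatorname{ind}_p(W)$ equals its winding number, namely $1$. Thus the whole lemma reduces to: \emph{every closed coordinate disk $D'\subseteq\Int D$ with $p\in\Int D'$ contains a closed orbit} --- such an orbit lies in $D'$, encircles $p$, and hence, choosing $D'$ inside a prescribed neighborhood $U$, separates $p$ from $S\setminus U$.

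Next I would extract from the hypothesis the two inputs to Poincar\'e--Bendixson. First, for any $q\ne p$ the point $p$ does not belong to the closure of the orbit of $q$, so $p$ lies in no $\omega$- or $\alpha$-limit set. Second, if the forward half-orbit of some $q\in D'\setminus\{p\}$ stays in $D'$, then $\omega(q)$ is a nonempty compact connected invariant set containing no fixed point (it misses $p$ by the first input, and $p$ is the only zero in $D'$), so by Poincar\'e--Bendixson $\omega(q)$ is a closed orbit contained in $D'$; symmetrically for backward half-orbits and $\alpha(q)$. Consequently, if $D'$ contains no closed orbit, then every orbit meeting $D'\setminus\{p\}$ must exit $D'$ both in forward and in backward time.

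To prove the reduced assertion, suppose $D'$ contains no closed orbit, and derive a contradiction with the hypothesis. Pick $q_n\to p$ with $q_n\in\Int D'\setminus\{p\}$, and let $\ga_n$ be the trajectory with $\ga_n(0)=q_n$. By the previous paragraph the backward orbit of $q_n$ exits $D'$, so $\alpha_n:=\sup\{t<0:\ga_n(t)\in\pd D'\}\in(-\infty,0)$ is attained; set $x_n=\ga_n(\alpha_n)\in\pd D'$ and $\tau_n=-\alpha_n>0$, and note $\ga_n([\alpha_n,0])\subseteq D'$ and $\Phi_{\tau_n}(x_n)=q_n$. Passing to a subsequence, $x_n\to x_*\in\pd D'$. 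If $\tau_n$ had a bounded subsequence converging to some $\tau$, continuity of the flow would give $\Phi_\tau(x_*)=\lim_n\Phi_{\tau_n}(x_n)=\lim_n q_n=p$, impossible since the orbit of $x_*\ne p$ cannot reach the fixed point $p$ in finite time (apply $\Phi_{-\tau}$). Hence $\tau_n\to\infty$, and then for every fixed $T$ and all large $n$ the arc $\{\Phi_t(x_n):0\le t\le T\}=\{\ga_n(\alpha_n+t):0\le t\le T\}$ lies in $D'$ (as $\alpha_n+t\in[\alpha_n,0]$), so in the limit $\{\Phi_t(x_*):0\le t\le T\}\subseteq D'$; thus the forward half-orbit of $x_*$ stays in $D'$. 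By the previous paragraph $\omega(x_*)$ is a closed orbit in $D'$ --- contradicting our assumption. This proves the reduced assertion, hence the existence of the separating closed orbit and, via the index fact, $\operatorname{ind}_p(W)=1$.

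The step I expect to be the main obstacle is the limit argument of the third paragraph: one must cleanly dispatch the dichotomy ``$\tau_n$ bounded vs.\ $\tau_n\to\infty$'' (only the divergent case survives, and it is exactly what feeds back into Poincar\'e--Bendixson), and one must apply Poincar\'e--Bendixson on a region $D'$ that \emph{does} contain the fixed point $p$ --- it is precisely the assumption that $p$ lies in the closure of no nonconstant orbit that removes $p$ from the relevant limit sets and licenses the theorem. Secondary nuisances are pinning down the sign $+1$ (rather than $-1$) when identifying the index with a turning number, and keeping the Jordan-curve and separation bookkeeping on $S$ honest by always arguing inside the chart~$D$.
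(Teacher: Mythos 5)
Your proof is correct and follows essentially the same route as the paper's: both obtain a forward half-orbit trapped in a small disk around $p$ as a limit of backward trajectory arcs starting at points $q_n\to p$, invoke the Poincar\'e--Bendixson theorem with the hypothesis on $p$ excluding the fixed-point alternative, and then identify the zero enclosed by the resulting closed orbit as $p$. The only cosmetic differences are that you justify $\tau_n\to\infty$ via the ``cannot reach a fixed point in finite time'' dichotomy (the paper asserts the divergence directly), and you locate the enclosed zero and compute $\operatorname{ind}_p(W)=1$ via the winding number of $W$ along the closed orbit and the Umlaufsatz, where the paper uses Brouwer's fixed-point theorem on the invariant disc and a direct index computation in a chart.
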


\begin{proof}
We may assume that $U$ is a topological disc 
and that it does not contain zeroes of~$W$ other than $p$.
Let $U_1\ni p$ be a smaller neighborhood whose closure $\overline U_1$
is contained in~$U$.

First we show that there exist a non-constant half-orbit of $W$
contained in $\overline U_1$.
Pick a sequence $\{p_i\}_{i=1}^\infty\subset U_1\setminus\{p\}$
such that $p_i\to p$. 
For each $i$, let $\ga_i$ be the trajectory of $W$
with $\ga_i(0)=p_i$ and let $(-a_i,b_i)$ be the maximal interval of $\R$
containing 0 and contained in the set $\{t\in\R:\ga(t)\in U_1\}$.
Since $p_i\to p$ and $p$ is a fixed point of $W$, we have $a_i\to+\infty$.
If $a_i=+\infty$ for some $i$, then $\{\ga_i(t):t\le 0\}$
is a desired half-orbit.
Otherwise we have a sequence of points $\{\ga_i(-a_i)\}$
in the boundary of $U_1$.
Let $q$ be a partial limit of this sequence
and let $\ga$ be the trajectory of $W$ with $\ga(0)=q$.
Then $\ga$ is the limit of a subsequence of trajectories $t\mapsto \ga_i(t-a_i)$.
Since $a_i\to +\infty$, it follows that
$\ga(t)\in\overline U_1$ for all $t\ge 0$,
so $\{\ga(t):t\ge 0\}$ is a desired half-orbit.

Thus we have proved the existence of a non-constant half-orbit of $W$
in $\overline U_1$.
The Poincar\'e-Bendixson Theorem (see e.g. \cite{KH}*{Theorem 14.1.1})
implies that the closure of this half-orbit contains a fixed point or a closed orbit.
The case of a fixed point is ruled out by the assumption of the lemma
since $p$ is the only fixed point in~$U$.
Hence $U$ contains a closed orbit of~$W$.

Let $D\subset U$ be the disc bounded by this closed orbit.
Then $D$ is invariant under the flow generated by $W$.
Hence, by Brouwer's fixed-point theorem, $D$ contains a fixed point of the flow.
Since $p$ is the unique fixed point in~$U$, it follows that $p\in D$,
hence the closed orbit separates $p$ from $S\setminus U$.

To prove the second claim of the lemma, calculate the index $\operatorname{ind}_p(W)$ in a local 
coordinate chart which maps $p$ to $0\in\R^2$ and $D$ to the standard disc in $\R^2$.
In this chart the relation $\operatorname{ind}_p(W)=1$ is immediate
from the definition of the index, see \cite{GP}*{p.~134}.
\end{proof}

\begin{lemma}\label{l:fixedpts}
Let $W$ be a nonzero
quadratic homogeneous vector field on a vector space $Y\simeq\R^3$,
and let $S\subset Y$ be a smooth closed strictly convex surface
symmetric with respect to~0.
(The strict convexity means that $S$ is a boundary
of a convex body and $S$ contains no straight line segments).

Assume that $W$ is tangent to~$S$.
Then there exist a non-constant orbit of $W$ in $S$
whose closure contains a zero of $W$.
\end{lemma}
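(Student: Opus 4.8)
The plan is to study the zeroes of $W$ on $S$ and show that if no non-constant orbit had a zero of $W$ in its closure, we would reach a contradiction with the Poincaré–Hopf index theorem. First I would observe that $W$, being a nonzero quadratic homogeneous vector field on $Y \simeq \R^3$, is odd: $W(-y) = W(y)$ — wait, quadratic homogeneous means $W(ty) = t^2 W(y)$, so $W(-y) = W(y)$, i.e.\ $W$ is \emph{even}. Since $S$ is symmetric with respect to $0$, this means $W$ takes the same value at antipodal points of $S$. I would also record that $W$ cannot vanish identically on any great-circle cross-section of $S$ of the form $\ell \cap S$ without further thought — actually the key point is that $W$ has only finitely many zeroes on $S$ unless it vanishes on a whole curve; I would handle the zero set via the structure of $W$ as a pair of quadratic forms and argue that generically (and, after a perturbation-free algebraic argument, always in the relevant cases) the zero set of $W$ restricted to $S$ is either finite or contains a curve, and in the latter case that curve is itself an orbit (a constant-speed-zero locus), but a curve of zeroes is a union of fixed points, so any orbit accumulating to it has a zero in its closure and we are done. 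Thus the interesting case is when $W$ has finitely many zeroes on $S$.

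In the finite-zero case, suppose for contradiction that no zero of $W$ lies in the closure of a non-constant orbit. Then every zero $p$ of $W$ on $S$ is an isolated zero satisfying the hypothesis of Lemma~\ref{l:closed orbit} (applied with the smooth closed surface $S$ in place of the abstract surface there), so $\operatorname{ind}_p(W) = 1$ for every such $p$. By the Poincaré–Hopf Index Theorem, the sum of these indices equals $\chi(S) = 2$ (since $S$ is a sphere). Hence $W$ has exactly two zeroes on $S$, and by the evenness of $W$ combined with the central symmetry of $S$, these two zeroes must form an antipodal pair $\{p, -p\}$. Now I would extract a contradiction from the homogeneity of $W$: a quadratic homogeneous vector field on $\R^3$ whose restriction to a strictly convex symmetric sphere has exactly two (antipodal) zeroes is very rigid. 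Concretely, the line $\R p$ is invariant under the flow (since $W$ is tangent to $S$ and $S \cap \R p = \{p, -p\}$ forces $W(p) \parallel p$, hence $W(p) = 0$ as it is also tangent, consistent with $p$ being a zero), and I would look at the induced vector field on the quotient sphere $S/\pm = \mathbb{RP}^2$, where evenness of $W$ descends to a genuine vector field; on $\mathbb{RP}^2$ the two antipodal zeroes become a single zero, whose index must sum to $\chi(\mathbb{RP}^2) = 1$, consistent, so this alone is not the contradiction — instead I would use that Lemma~\ref{l:closed orbit} also produces a closed orbit near $p$ bounding a disc $D_p$ invariant under the flow, and similarly near $-p$, and the complement $S \setminus (D_p \cup D_{-p})$ is an annulus invariant under the flow with no fixed points, forcing (again by Poincaré–Bendixson on the annulus, or by a closed-orbit argument) the existence of a closed orbit in the annulus; iterating, one builds a continuum of closed orbits foliating the annulus, and then I would invoke Lemma~\ref{l:tangent vector field}: along each such closed orbit $\Psi$ (the Minkowski norm of the convex body bounded by $S$, available since $0$ is in the interior) is constant, which is automatic, but more usefully the existence of a full one-parameter family of closed orbits of a homogeneous vector field is incompatible with $W$ being quadratic and nonzero unless $W$ vanishes on the cone over that family — contradicting finiteness of the zero set.

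Let me streamline the contradiction, since the iteration above is the delicate part. The cleaner route: in the finite-zero case with no orbit-closure containing a zero, Lemma~\ref{l:closed orbit} gives, around each zero $p_i$, arbitrarily small closed orbits separating $p_i$ from the rest of $S$. Take the union $\mathcal{O}$ of all closed orbits of $W$ on $S$ together with all zeroes of $W$; one shows $\mathcal{O}$ is closed (limits of closed orbits are closed orbits or contain zeroes, and in our setting the "contain a zero" alternative would put a zero in an orbit closure — excluded — so limits of closed orbits are closed orbits) and nonempty. Its complement $S \setminus \mathcal{O}$ is an open flow-invariant set every point of which lies on a non-constant, non-closed orbit whose $\omega$- and $\alpha$-limit sets (by Poincaré–Bendixson, since there are no zeroes in orbit closures) are closed orbits in $\partial \mathcal{O}$. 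If $S \setminus \mathcal{O} \ne \emptyset$, pick a component $A$; it is an open annulus-type region (its boundary closed orbits) and the flow on $\overline A$ is a standard annulus flow, which combined with the homogeneity of $W$ forces $W$ to be a scalar multiple of a linear field on the cone over $A$ — impossible for a genuinely quadratic $W$. Hence $S = \mathcal{O}$, i.e.\ $S$ is covered by closed orbits and finitely many zeroes; but a quadratic homogeneous vector field all of whose nonzero trajectories on a convex sphere are periodic generates a flow by homogeneous degree-one (in time-rescaled sense) maps, and the period function, together with $\tr$-type identities, forces $W$ to be odd of degree one — contradiction with $\deg W = 2$ and $W \ne 0$.

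\textbf{Main obstacle.} The hard part will be the final rigidity step: ruling out the possibility that $S$ is entirely foliated by closed orbits and isolated zeroes of a genuinely quadratic homogeneous $W$, and ruling out an invariant annulus component. The Poincaré–Hopf bookkeeping ($\sum \operatorname{ind} = 2$, all indices $1$, hence two antipodal zeroes) is routine once Lemma~\ref{l:closed orbit} is in hand, but converting "two zeroes, all other orbits closed or limiting onto closed orbits" into a contradiction with the quadratic-homogeneous nature of $W$ is where the real work lies — most likely one argues that the cone over a closed orbit is a $W$-invariant cone on which $W$ restricts to a nowhere-zero homogeneous field, integrates this to get an explicit algebraic constraint (the closed orbit being an algebraic curve of degree dividing a small number), and shows a one-parameter family of such curves overdetermines the two quadratic component polynomials of $W$, forcing them proportional and hence $W$ tangent to a line field — reducing to $\deg W \le 1$, contradiction. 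I expect this algebraic-geometric squeeze, rather than the topological dynamics, to be the crux.
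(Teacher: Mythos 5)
Your topological bookkeeping in the finite-zero case --- Lemma~\ref{l:closed orbit} giving $\operatorname{ind}_p(W)=1$ at every zero, Poincar\'e--Hopf giving a total of $2$, hence exactly two zeroes which are antipodal by the evenness of $W$ and the symmetry of $S$ --- coincides with the paper's. But the proof stops there: the step you yourself flag as ``where the real work lies'' is exactly the step that is missing, and none of the three routes you sketch for it holds up as stated. An invariant fixed-point-free annulus need not be foliated by closed orbits (Poincar\'e--Bendixson only says every orbit limits onto \emph{some} closed orbit, of which there may be finitely many with spiralling in between); the claim that such an annulus forces $W$ to be ``a scalar multiple of a linear field on the cone over $A$'' is unsubstantiated; and the assertion that a sphere covered by closed orbits forces $\deg W=1$ is not an argument. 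The paper's actual contradiction is concrete and of a different nature: writing $W_i(x,y,z)=Q_i(x,y)+z\,L_i(x,y)$ in coordinates with $p=(0,0,1)$ and the tangent plane at $p$ horizontal, smoothness at $p$ forces $L_3=0$; $Q_3$ cannot be definite, since otherwise $z$ would be strictly monotone along the closed orbit supplied by Lemma~\ref{l:closed orbit}, so one may choose $e_2$ with $Q_3(0,y)\equiv 0$; strict convexity then forces the restriction of $W_1$ to the $yz$-plane to be semi-definite and to vanish only on $\R p$ (a zero $q=(0,y_0,z_0)$ with $|z_0|<1$ would make $W(q)$ a nonzero tangent vector whose line passes through the interior of the body); hence $W$ points inward along the boundary of the half-sphere $S\cap\{x\ge 0\}$, making it forward-invariant --- incompatible with the closed orbit separating $p$ from $-p$, which must cross that boundary infinitely often. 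You would need to supply an argument of comparable force to close your gap.

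The infinite-zero case is also not handled. You say that if the zero set contains a curve then ``any orbit accumulating to it has a zero in its closure and we are done,'' but you never exhibit such an orbit; nearby non-constant orbits need not accumulate onto the curve of zeroes. The paper instead observes that the zero set $\Gamma$ is the intersection of $S$ with the common zero locus of three quadratic forms --- a plane, a union of two planes, or an elliptic cone --- so that $S\setminus\Gamma$ has a disc component $U$; Poincar\'e--Bendixson applied to a non-constant orbit in $U$ (under the contrary assumption) yields a closed orbit bounding an invariant disc in $U$, and Brouwer's fixed-point theorem then produces a zero of $W$ outside $\Gamma$, a contradiction. Some argument of this kind is required in your write-up as well.
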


\begin{proof}
Suppose to the contrary that
no zero of $W$ belongs to the closure of a non-constant orbit.
We restrict $W$ to $S$ and regard it as a vector field on~$S$.
Note that $S$ is diffeomorphic to the 2-sphere.

{\bf First consider the case when $W$ has finitely many zeroes.}
By the Poincar\'e-Hopf Theorem (see e.g.\ \cite{GP}*{Chapter~3, \S5}),
the sum of indices $\operatorname{ind}_p(W)$
where $p$ ranges over all zeroes of $W$, equals~2.
By Lemma \ref{l:closed orbit} each index equals~1,
hence $W$ has exactly two zeroes on~$S$. 
Denote one of them by~$p$, then the other one is $-p$
due to the symmetry of $S$ and~$W$.

Choose a basis $e_1,e_2,e_3$ in $Y$ such that $e_3=p$ and
$e_1,e_2$ are parallel to the tangent plane to $S$ at~$p$.
The choice of $e_1$ and $e_2$ will be refined later.
Identify $Y$ with $\R^3$ by means of this basis
and denote the respective coordinates by $x,y,z$.
We call the $xy$-plane \textit{horizontal}.

We write $W$ as a function of coordinates $x,y,z$:
$$
W(x,y,z) = (W_1(x,y,z), W_2(x,y,z), W_3(x,y,z))
$$
where $W_i(x,y,z)$ are quadratic forms,
and decompose $W_i$ as
$$
 W_i(x,y,z) = Q_i(x,y) + z\cdot L_i(x,y)
$$
where $Q_i$ and $L_i$ are a quadratic form and a linear function
on the $xy$-plane, $i=1,2,3$.
The lack of $z^2$ in the formula follows from the fact that
$W(0,0,1)=W(p)=0$.

Since the tangent plane to $S$ at $p=(0,0,1)$ is horizontal,
$S$ is smooth and $W$ is homogeneous, we have
$$
 \frac{W_3(x,y,1)}{\|W(x,y,1)\|} \to 0 \quad\text{as $x,y\to 0$}.
$$
This implies that $L_3=0$, so $W_3(x,y,z)=Q_3(x,y)$ for all $x,y,z$.

Observe that $Q_3$ cannot be positive or negative definite,
as otherwise the $z$-coordinate would be strictly monotone along
a periodic trajectory provided by Lemma~\ref{l:closed orbit}.
Therefore $Q_3$ vanishes at some nonzero horizontal vector,
and we now require that the basis is chosen so that
$e_2$ is such a vector.
Then $W_3(0,y,z)=Q_3(0,y)=0$ for all $y,z$.

Consider the restriction of the quadratic form $W_1$ to the $yz$-plane.
We are going to show that this restriction is semi-definite,
vanishing only on the line $\{x=y=0\}$ containing~$p$.
Suppose to the contrary that $W_1$ vanishes on some other line in the
$yz$-plane. Then there is a point $q=(0,y_0,z_0)\in S$
such that $q\notin\{p,-p\}$ and $W_1(q)=0$.
Observe that $|z_0|<1$ since all points of $S$ except $p$ and $-p$ lie in the open strip $\{-1<z<1\}$
due to the strict convexity of $S$.

Since $W_1(q)=0$ and $W_3$ vanishes on the $yz$-plane, $W(q)$ is proportional to $(0,1,0)$.
Note that $W(q)\ne 0$ since $p$ and $-p$ are the only zeroes of~$W$.
Recall that $W(q)$ is tangent to $S$ at~$q$, hence the straight line
$$
 \{ q + tW(q) : t\in\R \} = \{ (0,y_0+t,z_0) : t\in\R\}
$$
does not intersect the interior of the body bounded by~$S$.
However this line contains the point $(0,0,z_0)$ which belongs
to the interior of the body (since $|z_0|<1$).
This contradiction shows that the restriction of $W_1$ to the $yz$-plane
is semi-definite. Hence it has a constant sign,
either positive or negative, on the set $\{x=0,\,y\ne 0\}$.
Replacing, if necessary, $W$ with $-W$, we can assume that the sign is positive.

Consider the disc (``half-sphere'') $D = S \cap \{x\ge 0 \}$.
The positivity of $W_1$ on $\{x=0,\,y\ne 0\}$
implies that  $W$ points inwards $D$ everywhere on the boundary of~$D$
except the points $p$ and $-p$ where $W$ vanishes.
This implies that any forward half-orbit of~$W$ starting at a point of~$D$
never leaves~$D$.
However by Lemma \ref{l:closed orbit} there exists a periodic trajectory of $W$
separating $p$ from $-p$. This trajectory enters and leaves $D$ infinitely many times,
a contradiction.
This proves Lemma~\ref{l:fixedpts}
in the case when $W$ has finitely many zeroes.

{\bf Now consider the case when $W$ has infinitely many zeroes on~$S$.}
Identify $Y$ with $\R^3$ and denote the set of zeroes of $W$ on $S$
by~$\Gamma$.
Then $\Gamma$ is the intersection of $S$ with zero sets of the
three coordinate functions of~$W$.
These functions are quadratic forms on $\R^3$,
so each of the three zero sets is either all of $\R^3$
or a conical surface representing a quadric in the projective plane.
The intersection of plane quadrics is infinite only if it is
a straight line or all the quadrics coincide.
Hence $\Gamma$ is the intersection of $S$ with either
a plane, or a union of two planes, or an elliptic cone.
In any case $S\setminus\Gamma$ has a component $U$
homeomorphic to the open disc.
Pick a non-constant orbit of $W$ contained in $U$
and apply the Poincar\'e-Bendixson Theorem as in
the proof of Lemma \ref{l:closed orbit}.
Since by our assumption the closure of the orbit
cannot contain a fixed point, it contains a closed orbit.
This closed orbit bounds a disc $D\subset U$ which is
invariant under the flow generated by $W$ and hence
contains a fixed point of the flow by Brouwer's fixed-point theorem.
This fixed point is a zero of $W$ that does not belong to~$\Gamma$,
a contradiction. This finishes the proof of Lemma~\ref{l:fixedpts}.
\end{proof}

\section{Proof of Proposition \ref{prop:Rexists}}
\label{sec:proofRexists}

In this section we prove Proposition~\ref{prop:Rexists}. To facilitate understanding,
we begin in \S\ref{subsec:Rexists smooth}
with a short argument proving the proposition in the smooth case, where it holds for all rather than for almost all $X\in\Gr_n(V)$.
Then in \S\ref{subsec:nu} we explain the choice of $X$ and $\nu$,
and finally in \S\ref{subsec:R_lambda} we construct the desired tensor $R$.
We fix $V$ and $B$ satisfying the assumptions of Proposition \ref{prop:Rexists}
for the rest of the section.
Recall that $B$ contains 0 in its interior.

\subsection{Smooth case}\label{subsec:Rexists smooth}
First we assume that  $\pd B$ is $C^\infty$-smooth
and prove Proposition~\ref{prop:Rexists} under this assumption.

Fix $X\in\Gr_n(V)$ and $\nu\in V\setminus X$.
We decompose $V$ as $X\oplus\R\nu\simeq X\times\R$ and parametrize
a neighborhood of $X$ in $\Gr_n(V)$ by elements of $X^*$ as follows:
to each $\la\in X^*$ we associate its graph $H_\la\subset V$.
That is,
\begin{equation}\label{e:Hla}
 H_\la = \{ x + \la(x)\nu \mid x\in X \} .
\end{equation}
Note that $H_\la\in\Gr_n(V)$ for all $\la$ and $H_0=X$.
Denote $X\cap B$ by $K$.

Since all cross-sections of $B$ are linearly equivalent,
for every $\la\in X^*$ there exists a linear map $F_\la\co X\to V$
such that $F_\la(X) = H_\la$ and $F_\la(K) = H_\la \cap B$.
The smoothness of $\Psi_B$ implies (see \cite{BM}*{\S2.1})
that one can choose the family $\{F_\la\}$ to be smooth
in a neighborhood of $\la=0$
and such that $F_0=\id_X$.
Since $F_\la(X)=H_\la$, $F_\la$ can be written in the form
\begin{equation}\label{e:FlaGla}
 F_\la(x) = G_\la(x) + \la(G_\la(x))\cdot \nu
\end{equation}
for $G_\la\in\Hom(X,X)$ defined by $G_\la=\pr_\nu\circ F_\la$
where $\pr_\nu\co V\to X$ is the projection along $\nu$.

Now define $R$ as the differential of the map $\la\mapsto G_\la$ at $\la=0$.
By construction, $R$ is a linear map from $X^*$ to $\Hom(X,X)$.
We denote $R(\la)$ by $R_\la$ for all $\la\in X^*$.

Since $F_\la(K)$ is a cross-section of $B$, we have $F_\la(x)\in \pd B$
for all $x\in \pd K$.
Therefore the vector $\frac d{dt}\big|_{t=0} F_{t\la}(x)$ is tangent to $B$
at $F_0(x)=x$. From \eqref{e:FlaGla} we have
$$
  \frac d{dt}\Big|_{t=0} F_{t\la}(x)
  = \frac d{dt}\Big|_{t=0} G_{t\la}(x) + \la(G_0(x))\nu
  = R_\la(x) + \la(x) \nu .
$$
Thus for every $x\in \pd K$ the vector $R_\la(x) + \la(x) \nu$
is tangent to $B$ at~$x$.
This is the second property of $R_\la$ claimed in Proposition \ref{prop:Rexists}.

To obtain the remaining property $\tr R_\la=0$,
we have to adjust the choice of~$\nu$.
In the next subsection we describe a construction for $\nu$
based on the derivative of the cross-section area
and prove the identity $\tr R_\la=0$ in Lemma \ref{l:trace0}.
Alternatively, in the smooth case one can use the following short argument.

Having constructed $R$ as above, let us replace $\nu$ by
$\nu'=\nu+w$, where $w\in X$, and adjust $R$ accordingly.
Namely for $\la\in X^*$ define $R'_\la\in\Hom(X,X)$ by
$$
 R'_\la(x) = R_\la(x) - \la(x)w .
$$
Then the linear map $R'\co X^*\to \Hom(X,X)$
is defined by $R'(\la)=R'_\la$ for all $\la\in X^*$.
By construction,
$$
 R'_\la(x) + \la(x)\nu' = R_\la(x) + \la(x)\nu \quad \text{ for all } x \in X,
$$ 
hence (2)~in~Proposition \ref{prop:Rexists}
is satisfied for $R'$ and~$\nu'$.
For the traces we have
$$
 \tr R'_\la = \tr R_\la - \la(w) .
$$
Since the map $\la\mapsto\tr R_\la$ is linear, there exists $w\in X$
such that $\la(w) = \tr R_\la$ for all $\la\in X^*$.
Choosing this $w$ for the above construction and 
substituting $\nu'$ and $R'$
for $\nu$ and $R$ finishes the proof of Proposition \ref{prop:Rexists}
in the smooth case.

\subsection{Choice of $X$ and $\nu$}\label{subsec:nu}
Now we return to the general case where no smoothness is assumed.
Pick an auxiliary Euclidean metric on $V$
and consider the cross-section area function
\begin{equation}\label{e:cross area}
X\mapsto \vol_n(B\cap X), \qquad X\in\Gr_n(V),
\end{equation}
where $\vol_n$ is the $n$-dimensional Euclidean  volume.
This function is Lipschitz and therefore,
by Rademacher's Theorem, it is differentiable almost everywhere.
Our plan is to prove the claims
of Proposition \ref{prop:Rexists}
for any $X\in\Gr_n(V)$ where this function
is differentiabe.
Though $\vol_n$ depends on the choice of the auxiliary metric,
the differentiability property does not.

In what follows we use an affine-invariant version of the cross-section area.
It is a function on the exterior product $\Lambda^n V$
defined in \eqref{e:affine area} below.

Since $\dim V=n+1$, every $n$-vector $\sigma\in \Lambda^n V$
can be written in the form
$
 \sigma = v_1\wedge\dots\wedge v_n
$
for some $v_1,\dots,v_n\in V$.
If $\sigma\ne 0$ then $v_1,\dots,v_n$ are linearly independent.
In this case we denote by  $\Pi_\sigma$ the linear span
of $v_1,\dots,v_n$,
and by $\vol_\sigma$ the Haar measure on
the hyperplane $\Pi_\sigma$ normalized so
that the measure of the parallelotope with edges $v_1,\dots,v_n$
equals~1.
One easily sees that $\Pi_\sigma$ and $\vol_\sigma$
do not depend on the choice of vectors $v_1,\dots,v_n$
representing~$\sigma$.
Now define
\begin{equation}\label{e:affine area}
 A(\sigma) = \vol_\sigma(B\cap \Pi_\sigma)^{-1}.
\end{equation}
The resulting function
$A \co \Lambda^n V \setminus\{0\} \to \R_+$
is positively $1$-homogeneous and it can be extended to~0
by setting $A(0)=0$.
If $V$ is equipped with a Euclidean metric then
$A(\sigma) = \|\sigma\|\vol_n(B\cap \Pi_\sigma)^{-1}$,
where $\|\sigma\|$ is the Euclidean norm of $\sigma$.
Hence $A$ is differentiable at $\sigma$
if and only if the cross-section area \eqref{e:cross area}
is differentiable at~$\Pi_\sigma$.
One can see that $A$ is essentially the Busemann-Hausdorff area
(\cite{Bus47}, see also \cite{Thompson}*{Chapter 7})
of the Minkowski norm associated to~$B$,
except for a normalization constant, which we omit for convenience.

Let $\sigma$ be a differentiability point of $A$
and $X=\Pi_\sigma$.
We now construct a vector $\nu$ required
in Proposition \ref{prop:Rexists}.
Consider the differential $d_\sigma A$ of $A$ at~$\sigma$.
It is a linear function on $\Lambda^n V$,
and we regard it as an exterior form on $V$,
that is, $d_\sigma A\in \Lambda^n V^*$.
Since $\dim V=n+1$, there exists a vector $\nu\in V\setminus\{0\}$
such that
\begin{equation}\label{e:nu in kernel}
d_\sigma A (\nu \wedge \beta) = 0 \qquad
\textstyle\text{for all $\beta \in \Lambda^{n-1} V$}
\end{equation}
Indeed, the exterior $n$-form $d_\sigma A$
can be written as $d_\sigma A = \ell_1\wedge\dots\wedge\ell_n$
for some $\ell_1,\dots,\ell_n\in V^*$.
In order to satisfy \eqref{e:nu in kernel}, it suffices
to pick $\nu$ from $\bigcap_{i=1}^n \ker\ell_i \setminus\{0\}$.
Such a vector exists since $\bigcap_{i=1}^n \ker\ell_i$
is at least one-dimensional.
Note that
\begin{equation}\label{e:dAsigma}
d_\sigma A(\sigma) = A(\sigma) \ne 0
\end{equation}
due to the homogeneity of $A$.
This and \eqref{e:nu in kernel} imply that $\nu\notin X$.

We fix $X\in\Gr_n(V)$ and $\nu\in V\setminus X$ 
constructed above for the rest of this section,
and denote $B\cap X$ by~$K$.

\begin{remark}
The vector $\nu$ can be characterized in Euclidean terms as follows:
If a Euclidean structure on $V$ is defined in such a way
that $\nu\perp X$, then the cross-section area \eqref{e:cross area}
has zero derivative at~$X$.
We do not prove this fact as we do not use it in this paper.
\end{remark}

The choice of $\nu$ is essential for (1)~in~Proposition \ref{prop:Rexists}.
Namely, we have the following lemma.

\begin{lemma}\label{l:trace0}
Let $X$, $K$ and $\nu$ be as above.
Let $\la\in X^*$ and $L\in\Hom(X,X)$ be such that for every $x\in \pd K$
the vector $L(x) + \la(x)\nu$ is forward tangent to $B$ at~$x$.
Then $\tr L=0$.
\end{lemma}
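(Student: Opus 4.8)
The plan is to compute the derivative of the affine cross-section area $A$ along the curve $t \mapsto \sigma_t$ of $n$-vectors representing the hyperplanes $H_{t\la}$, and to match it against the variation of the volume of $B \cap H_{t\la}$ induced by the tangency hypothesis. The vector $\nu$ was chosen precisely so that $d_\sigma A$ annihilates $\nu \wedge \Lambda^{n-1}V$; this will force a certain first-order term to vanish, and the surviving term will be exactly $A(\sigma)\cdot\tr L$ up to a positive factor, giving $\tr L = 0$.

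Here are the steps in order. First I would fix a basis $e_1,\dots,e_n$ of $X$ and write $\sigma = e_1 \wedge \dots \wedge e_n$, so $X = \Pi_\sigma$ and $K = B \cap X$. Next, I would produce a one-parameter family of linear maps $\Phi_t \co X \to V$ with $\Phi_0 = \id_X$ such that $\Phi_t(X) = H_{t\la}$ and $\Phi_t(K)$ is ``close to'' $B \cap H_{t\la}$ to first order in $t$ — concretely, take $\Phi_t(x) = x + t(L(x) + \la(x)\nu)$, so that $\frac{d}{dt}\big|_{t=0}\Phi_t(x) = L(x) + \la(x)\nu$. The forward-tangency hypothesis says that for each $x \in \pd K$ this velocity vector is forward tangent to $\pd B$ at $x$; by Lemma \ref{l:tangent cone} (applied directionally, together with the subadditivity of $\pd^+_x\Psi$), this makes $\Phi_t(\pd K)$ lie ``infinitesimally inside'' $B$ from one side, and the symmetric statement with $-\la, -L$ from the other, which is enough to control $\vol_{n}(B \cap H_{t\la})$ to first order: one gets $\vol_{\sigma_t}(B \cap H_{t\la}) = \vol_{\sigma_t}(\Phi_t(K)) + o(t)$ where $\sigma_t = \Phi_t(e_1)\wedge\dots\wedge\Phi_t(e_n)$. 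Then, by the change-of-variables formula for the Haar measures $\vol_{\sigma_t}$ (which are normalized exactly by the $n$-vector $\sigma_t$), one has $\vol_{\sigma_t}(\Phi_t(K)) = \vol_\sigma(K)$ identically in $t$ — the normalization cancels the Jacobian. Combining, $A(\sigma_t) = \vol_{\sigma_t}(B \cap H_{t\la})^{-1} = \vol_\sigma(K)^{-1} + o(t) = A(\sigma) + o(t)$, so $\frac{d}{dt}\big|_{t=0} A(\sigma_t) = 0$, i.e.\ $d_\sigma A(\dot\sigma_0) = 0$. Finally I would expand $\dot\sigma_0 = \frac{d}{dt}\big|_{t=0}\big(\Phi_t(e_1)\wedge\dots\wedge\Phi_t(e_n)\big) = \sum_{i} e_1\wedge\dots\wedge (L(e_i)+\la(e_i)\nu)\wedge\dots\wedge e_n$. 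The terms containing $L(e_i)$ sum to $(\tr L)\,\sigma$ (this is the standard identity $\sum_i e_1\wedge\dots\wedge Le_i\wedge\dots\wedge e_n = (\tr L)\,e_1\wedge\dots\wedge e_n$), while the terms containing $\la(e_i)\nu$ lie in $\nu \wedge \Lambda^{n-1}V$ and are killed by $d_\sigma A$ thanks to \eqref{e:nu in kernel}. Hence $0 = d_\sigma A(\dot\sigma_0) = (\tr L)\, d_\sigma A(\sigma) = (\tr L)\,A(\sigma)$ by \eqref{e:dAsigma}, and since $A(\sigma) \ne 0$ we conclude $\tr L = 0$.

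The main obstacle is the non-smooth step: justifying that the first-order behavior of the genuine cross-section volume $\vol_{\sigma_t}(B \cap H_{t\la})$ is captured by $\vol_{\sigma_t}(\Phi_t(K))$ up to $o(t)$, despite $\pd B$ having no differentiability assumed. One cannot argue by differentiating a smooth family of equivalences as in \S\ref{subsec:Rexists smooth}. Instead I would exploit that $A$ is assumed differentiable at $\sigma$ (so the limit defining $d_\sigma A(\dot\sigma_0)$ exists and is the same along the curve $\sigma_t$ and along the straight segment), and bound $\vol_{\sigma_t}(B \cap H_{t\la})$ from one side using that, by forward tangency and convexity, the point $\Phi_t(x)$ for $x \in \pd K$ is at distance $o(t)$ from $\pd B$, so $B \cap H_{t\la}$ contains $(1-o(t))\Phi_t(K)$ and is contained in $(1+o(t))\Phi_t(K)$ (rescaling about the interior point $0$); applying the analogous inequality with $-\la$ in place of $\la$ and using differentiability of $A$ at $\sigma$ pins down the one-sided derivative to be $0$ from both sides. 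Once that volume estimate is in hand, the exterior-algebra computation above is routine.
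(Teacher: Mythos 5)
Your proposal follows the paper's proof almost exactly: the same deformation $F_t(x)=x+t(L(x)+\la(x)\nu)$, the same curve $\sigma_t=(F_t)_*\sigma$, the same use of \eqref{e:nu in kernel} to kill the $\nu$-terms and of \eqref{e:dAsigma} to extract $\tr L$ from $d_\sigma A(\dot\sigma_0)=(\tr L)A(\sigma)$. The only genuine divergence is how you show $\frac{d}{dt}\big|_{t=0+}\vol_{\sigma_t}(B\cap H_{t\la})=0$: the paper writes this volume as $\frac1n\int_{S(X)}\Psi_t^{-n}$ and differentiates under the integral, whereas you sandwich $B\cap H_{t\la}$ between $(1+o(t))^{-1}F_t(K)$ and $F_t(K)$. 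This works, but note two points. First, the inner inclusion needs the $o(t)$ in $\Psi(F_t(x))\le 1+o(t)$ to be \emph{uniform} over $x\in\pd K$; this follows because for each $x$ the difference quotient $t^{-1}(\Psi(F_t(x))-1)$ is nonnegative and nonincreasing as $t\downarrow 0$ (convexity of $t\mapsto\Psi(F_t(x))$ plus $\pd^+_x\Psi(L(x)+\la(x)\nu)=0$), so Dini's theorem gives uniform convergence on the compact set $\pd K$ — you should say this, since it is the one nontrivial technical point replacing the paper's differentiation under the integral. (The outer inclusion $B\cap H_{t\la}\subseteq F_t(K)$ is in fact exact for $t\ge 0$, since convexity forces $\Psi(F_t(x))\ge 1$.) Second, your appeal to "the analogous inequality with $-\la$" is neither available (the hypothesis is only forward tangency of $L(x)+\la(x)\nu$, not of its negative) nor needed: since $A$ is differentiable at $\sigma$, the one-sided limit $\lim_{t\downarrow 0}t^{-1}(A(\sigma_t)-A(\sigma))$ already equals $d_\sigma A(\dot\sigma_0)$, so the right-derivative computation alone yields $d_\sigma A(\dot\sigma_0)=0$.
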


\begin{proof}
Pick a basis $e_1,\dots,e_n$ of $X$ and let $\sigma=e_1\wedge\dots\wedge e_n$.
We equip $X$ with a Euclidean metric such that $(e_i)$ is
an orthonormal basis.
For $t\in\R$, define $F_t\in\Hom(X,V)$ by
$$
 F_t(x) = x + t  \big(L(x) + \la(x)\nu\big)
$$
and $\sigma_t\in\Lambda^n V$ by
\begin{equation}\label{e:sigma_t}
 \sigma_t = (F_t)_* (\sigma)
 = \bigwedge_{i=1}^n \big(e_i + tL(e_i) + t\la(e_i)\nu \big) .
\end{equation}
Define $a(t)=A(\sigma_t)^{-1}$ where $A$ is the
function from \eqref{e:affine area}.
The definitions imply that
$
 a(t) = | F_t^{-1}(B) |
$ 
where $|\cdot|$ denotes the Euclidean volume in~$X\simeq\R^n$.
We are going to show that the right derivative of $a(t)$ at $t=0$ equals~0.

Let $\Psi$ be the norm on $V$ associated to~$B$.
Then $F_t^{-1}(B)$ is the unit ball of the norm
$\Psi_t:=\Psi\circ F_t$ on $X$.
Observe that for every $x\in\pd K$,
\begin{equation}\label{e:dPsiFt}
 \frac d{dt}\Big|_{t=0+} \Psi_t(x)
 = \pd^+_x\Psi\big( \tfrac d{dt}\big|_{t=0} F_t(x)\big)
 = \pd^+_x\Psi \big(L(x) + \la(x)\nu\big) = 0,
\end{equation}
where the last equality follows from Lemma \ref{l:tangent cone}
and the forward tangency assumption in the statement of the lemma.
Furthermore, \eqref{e:dPsiFt} holds for all $x\in X$ due to homogeneity.
We now express the volume of the unit ball of $\Psi_t$
in terms of its radial function:
$$
a(t) = \frac 1n \int_{S(X)} \Psi_t(x)^{-n}\, d\vol_{n-1}(x)
$$
where $S(X)$ is the unit sphere in~$X$.
Since $\Psi_t(x)$ is a convex function of $(x,t)$,
the formula has enough regularity to allow differentiation under the integral.
This and \eqref{e:dPsiFt} imply that $\frac d{dt}\big|_{t=0+} a(t) = 0$, as claimed.

On the other hand, since $a(t)=A(\sigma_t)^{-1}$ and $A$ is differentiable
at $\sigma_0=\sigma$, we have
$$
0=\tfrac d{dt}\big|_{t=0+} a(t) 
= d_\sigma (A^{-1})\big(\tfrac d{dt}\big|_{t=0}\sigma_t\big)
= -A(\sigma)^{-2}\cdot d_\sigma A\big(\tfrac d{dt}\big|_{t=0}\sigma_t\big).
$$
Thus $d_\sigma A\big(\tfrac d{dt}\big|_{t=0}\sigma_t\big)=0$.
Define
$\widetilde\sigma_t = \Lambda_{i=1}^n \big(e_i + tL(e_i) \big)$
and observe that 
$$
d_\sigma A\big(\tfrac d{dt}\big|_{t=0}\widetilde\sigma_t\big) = d_\sigma A\big(\tfrac d{dt}\big|_{t=0}\sigma_t\big) = 0
$$
because the terms involving $\nu$ in the definition \eqref{e:sigma_t} of $\sigma_t$ vanish under $d_\sigma A$
due to \eqref{e:nu in kernel}. 
Expanding the derivative of a wedge-product yields
$$
 \tfrac d{dt}\big|_{t=0}\widetilde\sigma_t = \tr L\cdot\sigma
$$
Since $d_\sigma A(\sigma)\ne 0$ by \eqref{e:dAsigma},
the last two equations imply that $\tr L=0$.
\end{proof}

\subsection{Construction of $R$}\label{subsec:R_lambda}
Let $X$ and $\nu$ be as above and $K=B\cap X$.
Recall that $\I(K)$ denotes the group of linear self-equivalences of $K \subset X$ (see \eqref{e:defI(K)}).
Since $\I(K)$ is a compact subgroup of $GL(X)$,
there is a Euclidean inner product $\langle\cdot,\cdot\rangle$
that is invariant under $\I(K)$
(see \cite{BHJM21}*{Lemma 2.2} and \cite{Gro67}*{Lemma 1}
for some explicit constructions).
We extend this inner product to $V$ in such a way that $\nu\perp X$
and $\langle\nu,\nu\rangle=1$.
Throughout the rest of this section $V$ is regarded as a Euclidean
space with this inner product.

The Euclidean structures on $X$ and $V$ induce a Euclidean inner product on
$\Hom(X,V)$ in a standard way: for $F,G\in\Hom(X,V)$,
\begin{equation}\label{e:hom product}
 \langle F, G \rangle = \tr (F\circ G^*) = \tr(G\circ F^*)
\end{equation}
where $F^*,G^*\in \Hom(V,X)$ are the adjoint operators to $F$ and $G$.
We use the Euclidean norm on $\Hom(X,V)$ 
defined by $\|F\|^2=\langle F,F\rangle$.

There is a natural right action (by composition) of $\I(K)$ on $\Hom(X,V)$.
Since all elements of $\I(K)$ are orthogonal operators on $X$, this action
is isometric: $\langle F\circ L, G\circ L\rangle = \langle F,G\rangle$
for all $F,G\in\Hom(X,V)$ and $L\in\I(K)$.

As in \S\ref{subsec:Rexists smooth},
we denote by $\pr_\nu$ the projection from $V$ to $X$ along $\nu$,
and by $H_\la$ the hyperplane corresponding to $\la\in X^*$ as in~\eqref{e:Hla}.
Since $\nu$ is now a unit normal vector to $X$, we can
rewrite the definition of $H_\la$ as follows:
\begin{equation} \label{e:Hla2}
 H_\la = \{ x\in V : \langle x, \nu \rangle = \lambda(\pr_\nu(x)) \} .
\end{equation}
Note that $H_0=X$ and the map $\lambda \mapsto H_\lambda$ 
is a homeomorphism between $X^*$ and the set of hyperplanes from
$\Gr_n(V)$ that do not contain~$\nu$.

Recall that every cross-section $B\cap H_\la$ is linearly equivalent to $K$.
We denote by $\I_\la$ the set of all linear equivalences between them,
more precisely,
$$
 \I_\la = \{ F\in\Hom(X,V) : F(K) = B\cap H_\la \} \text{ for all } \la\in X^*.
$$
Clearly $\I_\la$ is an orbit of the  aforementioned action of $\I(K)$ and the stabilizer of each $F \in \I_\la$ is trivial. 
In particular, $\I_\la$ is a compact smooth submanifold of $\Hom(X,V)$.
The set $\I_0$ is the image of $\I(K)$ under the natural embedding
$\Hom(X,X)\hookrightarrow\Hom(X,V)$.
Denote by $i_0$ the inclusion $X\hookrightarrow V$,
thus $i_0$ is the element of $\I_0$ corresponding to
the identity of~$\I(K)$.

Let us fix $\lambda \in H_0^* \setminus \{0\}$.
We are going to construct an operator $R_\lambda \in \Hom(X,X)$
satisfying the properties from Proposition \ref{prop:Rexists}.

Let $\{t_k\}_{k=1}^\infty$ be a sequence of positive reals decreasing to~$0$.
For each $k$, let $F_k\in\I_{t_k\lambda}$ be a nearest point
to $i_0$ in $\I_{t_k\lambda}$:
$$
 \|F_k - i_0\| = \inf_{F\in\I_{t_k\lambda} } \|F-i_0\|
$$
where $\|\cdot\|$ is the Euclidean norm on $\Hom(X,V)$ defined above.
Note that $F_k\ne i_0$ since $\I_{t_k\lambda}$ and $\I_0$ are disjoint.

\begin{lemma}\label{l:Fm trivia}
The sequence $\{F_k\}\subset\Hom(X,V)$ constructed above satisfies:
\begin{enumerate}
\item For every $k$ one has
$F_k-i_0\in (T_{i_0}\I_0)^\perp$
where $T_{i_0}\I_0$ is the
tangent space to $\I_0$ at $i_0$ regarded as a linear subspace of $\Hom(X,V)$,
and the orthogonal complement is taken in $\Hom(X,V)$
with respect to the inner product \eqref{e:hom product}.
\item $F_k\to i_0$ as $k\to\infty$.
\end{enumerate}
\end{lemma}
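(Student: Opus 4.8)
For part (1), the plan is to exploit that $F_k$ is a constrained minimizer.
Since $\I_{t_k\la}$ is a smooth compact submanifold of the Euclidean space $\Hom(X,V)$
and $F_k$ minimizes $F\mapsto\|F-i_0\|^2$ over it, the gradient $2(F_k-i_0)$ of this function
is normal to $\I_{t_k\la}$ at $F_k$, that is, $F_k-i_0\perp T_{F_k}\I_{t_k\la}$.
The task is then to upgrade this into $F_k-i_0\perp T_{i_0}\I_0$.
Write $\mathfrak g\subset\Hom(X,X)$ for the Lie algebra of $\I(K)$;
since the inner product $\langle\cdot,\cdot\rangle$ is $\I(K)$-invariant, the group $\I(K)$ consists of
orthogonal operators, so every $A\in\mathfrak g$ is skew-symmetric, $A^*=-A$.
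Both $\I_{t_k\la}$ and $\I_0$ are orbits, with trivial stabilizers, of the right action
$L\mapsto F\circ L$ of $\I(K)$ on $\Hom(X,V)$, and this action is the restriction of a linear map;
differentiating the orbit maps at the identity gives
$T_{F_k}\I_{t_k\la}=\{F_k\circ A:A\in\mathfrak g\}$ and $T_{i_0}\I_0=\{i_0\circ A:A\in\mathfrak g\}$.
Thus the minimality of $F_k$ reads $\langle F_k-i_0,\,F_k\circ A\rangle=0$ for all $A\in\mathfrak g$,
while the goal becomes $\langle F_k-i_0,\,i_0\circ A\rangle=0$ for all $A\in\mathfrak g$.

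These are linked by three elementary identities, all following from
$\langle F,G\rangle=\tr(FG^*)=\tr(F^*G)$, cyclicity of the trace, the equality $\tr M=\tr M^*$
for real operators, and $A^*=-A$.
First, $\langle F_k,F_k\circ A\rangle=\tr\!\big(A\,F_k^*F_k\big)=0$, since $F_k^*F_k$ is self-adjoint
while $A$ is skew-symmetric.
Second, $\langle F_k,i_0\circ A\rangle=-\langle i_0,F_k\circ A\rangle$, obtained by rewriting both sides
as the trace of $A$ composed with $\pr_\nu\circ F_k$, respectively with its adjoint.
Third, $\langle i_0,i_0\circ A\rangle=\tr A=0$, using $i_0^*\circ i_0=\id_X$ (valid because $\nu\perp X$).
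Combining the first identity with the minimality relation gives $\langle i_0,F_k\circ A\rangle=0$,
and feeding this into the second and third identities yields
$\langle F_k-i_0,\,i_0\circ A\rangle=-\langle i_0,F_k\circ A\rangle-0=0$, which is part (1).

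For part (2), the plan is to prove the stronger statement that $\inf_{F\in\I_\la}\|F-i_0\|\to0$
as $\la\to0$ in $X^*$, and then apply it to the sequence $\{t_k\la\}$.
I would parametrize using the graph isomorphism $\phi_\la\co X\to H_\la$, $\phi_\la(x)=x+\la(x)\nu$,
which satisfies $\phi_0=i_0$, $\phi_\la\to i_0$ in $\Hom(X,V)$, and $\phi_\la^{-1}=\pr_\nu|_{H_\la}$;
then $\I_\la=\phi_\la\circ\{L\in GL(X):L(K)=K_\la\}$, where $K_\la:=\pr_\nu(B\cap H_\la)$
is a convex body in $X$, linearly equivalent to $K$, with $0$ in its interior.
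Since $B$ is a fixed convex body with $0$ in its interior, $B\cap H_\la\to B\cap X=K$ in the
Hausdorff metric as $\la\to0$, hence $K_\la\to K$ as well.
I would then invoke the routine fact that, for a fixed convex body $K\subset X$ with $0$ in the interior,
if convex bodies $K_j$ linearly equivalent to $K$ converge to $K$ in the Hausdorff metric then
$\dist\!\big(\id_X,\{L\in GL(X):L(K)=K_j\}\big)\to0$: sandwiching $K$ between two balls forces
uniform bounds on such maps $L_j$ and their inverses, any subsequential limit $L_\infty$ must lie
in $\I(K)$, and then $L_jL_\infty^{-1}\to\id_X$ with $L_jL_\infty^{-1}(K)=K_j$.
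Composing with $\phi_\la$ finally yields
$\inf_{F\in\I_\la}\|F-i_0\|\le\|\phi_\la\|\cdot\dist\!\big(\id_X,\{L:L(K)=K_\la\}\big)+\|\phi_\la-i_0\|\to0$.

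I expect the substantive part to be (1): pinning down the tangent spaces as $F_k\circ\mathfrak g$
and $i_0\circ\mathfrak g$ and then running the trace computation, whose essential input is the
skew-symmetry of $\mathfrak g$ coming from the invariance of $\langle\cdot,\cdot\rangle$.
Part (2) is a routine compactness argument; the only point requiring care there is the uniform
two-sided ball estimate that makes the relevant families of linear equivalences precompact.
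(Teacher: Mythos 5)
Your proof is correct, but both parts take a genuinely different route from the paper's. The paper's single key observation is that, because the right $\I(K)$-action on $\Hom(X,V)$ is isometric and both $\I_{t_k\la}$ and $\I_0$ are orbits of it, the quantity $\|F_k-i_0\|$ equals the distance between the two orbits; hence $i_0$ is also a nearest point of $\I_0$ to $F_k$, and differentiating $G\mapsto\|F_k-G\|^2$ at $i_0$ along $T_{i_0}\I_0$ gives (1) in one line, while (2) follows by taking any partial limit $F_\infty$ of the bounded sequence $\{F_k\}$, noting $F_\infty\in\I_0$ from $F_k(K)=B\cap H_{t_k\la}\to K$, and bounding $\|F_k-i_0\|\le\|F_k-F_\infty\|$. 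You instead extract the first-order condition at $F_k$ over $\I_{t_k\la}$ and transport it to $T_{i_0}\I_0$ by identifying both tangent spaces as $F_k\circ\mathfrak g$ and $i_0\circ\mathfrak g$ and running trace identities that hinge on the skew-symmetry of $\mathfrak g$ (your three identities all check out, using $i_0^*=\pr_\nu$, which is where $\nu\perp X$ enters); and for (2) you build explicit near-identity equivalences from Hausdorff convergence of the cross-sections rather than using the orbit-distance equality. Both arguments ultimately rest on the $\I(K)$-invariance of the inner product, but the paper exploits it globally (orbit distances) where you exploit it infinitesimally (skew-symmetric Lie algebra); the paper's version is shorter and avoids your compactness/sub-subsequence bookkeeping in (2), while yours has the mild advantage of not needing the orbit-distance identity and of proving the slightly more quantitative statement $\dist(i_0,\I_\la)\to0$ as $\la\to0$.
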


\begin{proof} Since the group $\I(K)$ acts on $\Hom(X,V)$ by isometries
and both $\I_{t_k\la}$ and $\I_0$ are orbits of this action,
$\|F_k-i_0\|$ is in fact the minimum distance between these orbits:
\begin{equation} \label{e:orbit dist}
 \|F_k-i_0\| = \dist(\I_{t_k\la},\I_0) 
 := \inf \{ \|F-G\| : F\in \I_{t_k\la}, \, G\in\I_0 \} .
\end{equation}
In particular, $i_0$ is nearest to $F_k$ among points of~$\I_0$.
Hence the derivative at $i_0$ of the function $G\mapsto \|F_k-G\|^2$
is zero along any vector from $T_{i_0}\I_0$.
This derivative  along a vector $W\in\Hom(X,V)$ 
equals $2\langle F_k-i_0,W\rangle$.
Thus $F_k-i_0 \perp W$ for all $W\in T_{i_0}\I_0$
and the first claim of the lemma follows.

As $\{F_k\}$ is a bounded sequence, it either converges to $i_0$
or has a partial limit different from~$i_0$.
Let $F_\infty\in\Hom(X,V)$ be a partial limit of $\{F_k\}$.
Passing to a subsequence we may assume that $F_k\to F_\infty$ as $k\to\infty$.
Since $F_k(K)=B\cap H_{t_k\la}$ for all $k$ and the hyperplanes $H_{t_k\la}$
converge to $H_0=X$, we have $F_\infty(K)=K$, therefore $F_\infty\in\I_0$.
By \eqref{e:orbit dist},
$$
 \|F_k-i_0\| = \dist(\I_{t_k\la},\I_0) \le \|F_k-F_\infty\|
 \xrightarrow[m \to \infty]{} 0 ,
$$
therefore $F_k\to i_0$ and hence $F_\infty=i_0$.
The second claim of the lemma follows.
\end{proof}

Passing to a subsequence we may assume that there exists a limit
\begin{equation}\label{e:defWla}
W_\lambda = \lim_{k \to \infty} \frac{F_k - i_0}{\|F_k - i_0\|} \in \Hom(X,V).
\end{equation}

\begin{lemma}\label{l:Wla trivia}
The operator $W_\la$ defined by \eqref{e:defWla} satisfies:
\begin{enumerate}
\item $W_\la\ne 0$.
\item $W_\la \in (T_{i_0}\I_0)^\perp$.
\item For every $x\in\pd K$, the vector $W_\la(x)$ is forward tangent to $\pd B$.
\item There exists $C>0$ such that
$
 \langle W_\la(x), \nu\rangle = C \la(x)
$
for all $x\in X$.
\end{enumerate}
\end{lemma}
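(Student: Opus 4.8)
The plan is to verify the four assertions in turn. Write $\ep_k=\|F_k-i_0\|>0$ and $V_k=(F_k-i_0)/\ep_k$, so that $F_k=i_0+\ep_k V_k$ with $V_k\to W_\la$ and $\ep_k\to 0$ by the construction and Lemma~\ref{l:Fm trivia}(2). Claim (1) is immediate since each $V_k$ is a unit vector, hence $\|W_\la\|=1$. Claim (2) is also immediate: $(T_{i_0}\I_0)^\perp$ is a closed linear subspace of $\Hom(X,V)$ containing every $F_k-i_0$ by Lemma~\ref{l:Fm trivia}(1), hence it contains every $V_k$ and therefore the limit $W_\la$.

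For (3), fix $x\in\pd K$. Since $F_k$ is a linear bijection $X\to H_{t_k\la}$ taking $K$ onto the cross-section $B\cap H_{t_k\la}$, it maps the relative boundary $\pd K$ onto the relative boundary of $B\cap H_{t_k\la}$ in $H_{t_k\la}$, which is contained in $\pd B$. So $x_k:=F_k(x)=x+\ep_k V_k(x)$ lies in $\pd B$, $x_k\to x$, and $\ep_k^{-1}(x_k-x)=V_k(x)\to W_\la(x)$; applying Definition~\ref{d:forward tangent} with these $x_k$ and $c_k=\ep_k^{-1}$ shows $W_\la(x)$ is forward tangent to $\pd B$ at $x$.

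The substantive claim is (4). The idea is to exploit the defining relation \eqref{e:Hla2} of the hyperplane $H_{t_k\la}=F_k(X)$: for every $x\in X$ one has $F_k(x)\in H_{t_k\la}$, i.e.\ $\langle F_k(x),\nu\rangle=t_k\,\la(\pr_\nu(F_k(x)))$. Substituting $F_k(x)=x+\ep_k V_k(x)$ and using $x\perp\nu$, $\pr_\nu|_X=\id$, this becomes, after dividing by $\ep_k$,
\[
 \langle V_k(x),\nu\rangle=\frac{t_k}{\ep_k}\,\la(x)+t_k\,\la\!\big(\pr_\nu(V_k(x))\big).
\]
Picking $x_0\in X$ with $\la(x_0)\ne 0$ (possible as $\la\ne 0$) and evaluating at $x_0$ shows that the positive numbers $t_k/\ep_k$ converge --- since $V_k\to W_\la$ and $t_k\to 0$ --- to $C:=\langle W_\la(x_0),\nu\rangle/\la(x_0)\ge 0$. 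Passing to the limit in the displayed identity for arbitrary $x$ then gives $\langle W_\la(x),\nu\rangle=C\la(x)$ for all $x\in X$.

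It remains to rule out $C=0$, which is the step I expect to be the main obstacle: it is the only place where one must argue that the blow-up direction $W_\la$ is genuinely transverse to $X$ rather than tangential. If $C=0$ then $\langle W_\la(x),\nu\rangle=0$ for all $x\in X$, so $W_\la(X)\subseteq\nu^\perp=X$ and $W_\la$ is a linear vector field on $X$. By (3), for $x\in\pd K$ the vector $W_\la(x)$ is forward tangent to $\pd B$ at $x$ and lies in $X$; since the Minkowski norm of $K$ is the restriction to $X$ of that of $B$, Lemma~\ref{l:tangent cone} then shows $W_\la(x)$ is forward tangent to $\pd K$ at $x$. Thus $W_\la$ is a linear vector field forward tangent to $\pd K$, and Lemma~\ref{l:linear tangent vector field} gives $e^{tW_\la}\in\I(K)$ for all $t$, whence $W_\la\in T_{i_0}\I_0$. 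Together with (2) this forces $W_\la=0$, contradicting (1). Therefore $C>0$.
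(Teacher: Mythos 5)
Your proposal is correct and follows essentially the same route as the paper: parts (1)--(3) are handled identically, and for (4) you derive the same identity from \eqref{e:Hla2}, pass to the limit to get $C\ge 0$, and rule out $C=0$ by the same contradiction (a nonzero $W_\la$ landing in $X$ would be a linear field forward tangent to $\pd K$, hence lie in $T_{i_0}\I_0$, contradicting (1) and (2)). Your explicit use of Lemma~\ref{l:tangent cone} to pass from tangency to $\pd B$ to tangency to $\pd K$ is a small gap-filling detail the paper leaves implicit, but the argument is the same.
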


\begin{proof}
The first claim follows from the fact that $\|W_\la\|=1$.
The second one follows from Lemma \ref{l:Fm trivia}(1).
The third one follows from the definition of forward tangency
(see Definition \ref{d:forward tangent}) applied to the sequence
$\{F_k(x)\}_{k=1}^\infty\subset\pd B$;
this sequence converges to $x$ due to Lemma \ref{l:Fm trivia}(2).

To prove the fourth claim, recall that $F_k(X)=H_{t_k\la}$.
Hence, by \eqref{e:Hla2},
$$
 \langle F_k(x),\nu \rangle = t_k\lambda(\pr_\nu(F_k(x)))
$$
for all $x\in X$. Since $\langle x,\nu \rangle=0$ and $\pr_\nu(x)=x$,
this identity can be rewritten as follows:
$$
 \langle F_k(x) - x,\nu \rangle =t_k\lambda(\pr_\nu(F_k(x)-x)) + t_k\la(x) .
$$
As $\frac{F_k(x)-x}{\|F_k-i_0\|} \to W_\la(x)$ and $t_k\to 0$,
dividing the last equality by $\|F_k-i_0\|$ and passing to the limit yields that
$$
 \langle W_\la(x),\nu \rangle = \la(x) \lim_{k\to\infty}\frac{t_k}{\|F_k - i_0\|} ,
$$
in particular the limit in the right-hand side exists.
Denote this limit by $C$, and we obtain the desired identity 
$ \langle W_\la(x), \nu\rangle = C \la(x) $.
Clearly $C\ge 0$.

It remains to prove that $C\ne 0$.
Suppose to the contrary that $\langle W_\la(x),\nu\rangle=0$ for all $x\in X$.
Hence $W_\la(X)\subset X$, so we may now regard $W_\la$
as an element of $\Hom(X,X)$ and as a linear vector field on~$X$.
By Lemma \ref{l:linear tangent vector field},
the 1-parameter subgroup $\{e^{tW_\la}\}_{t\in\R}$ of $GL(X)$ is contained in~$\I(K)$.
Therefore $W_\la$ belongs to the tangent space $T_e\I(K)$ of $\I(K)$
regarded as a linear subspace of $\Hom(X,X)$.
However by (1) and (2), $W_\la$ is nonzero and orthogonal
to this tangent space, a contradiction.
Thus $C\ne 0$ and the last claim of Lemma \ref{l:Wla trivia} follows.
\end{proof}

Now define $R_\la\in\Hom(X,X)$ by
\begin{equation}\label{e:defRla}
 R_\la = C^{-1} \pr_\nu \circ W_\la
\end{equation}
where $W_\la$ is defined by \eqref{e:defWla}
and $C$ is the constant from Lemma \ref{l:Wla trivia}(4).
By Lemma \ref{l:Wla trivia}(4) we have
$$
 R_\la(x) + \la(x)\nu = C^{-1} W_\la(x)
$$
for all $x\in X$, and by Lemma \ref{l:Wla trivia}(3)
this vector is forward tangent to $\pd B$ whenever $x\in\pd K$.
This tangency and Lemma \ref{l:trace0} imply that $\tr R_\la=0$.
In addition, Lemma \ref{l:Wla trivia}(2) implies that 
$R_\la\perp T_e\I(K)$ in $\Hom(X,X)$.

To finish the proof of Proposition~\ref{prop:Rexists}, it remains to show that $R_\la$ depends linearly on~$\la$.
This is verified in the following proposition.

\begin{prop}\label{prop:Rdef}
For every $\la\in X^*$ there exists a unique operator $R_\la \in \Hom(X,X)$
such that
\begin{enumerate}
    \item $\tr R_\lambda = 0$.
    \item  $R_\la(x) + \la(x)\nu$ is forward tangent to $\pd B$
        for all $x \in \pd K$.
    \item $R_\la \perp T_e\I(K)$ in $\Hom(X,X)$.
\end{enumerate}
Moreover the map $\la \mapsto R_\la$ is linear.
\end{prop}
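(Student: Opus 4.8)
The plan is to establish existence, uniqueness, and linearity for the operator $R_\la$ characterized by the three conditions, leveraging the construction already carried out above for a fixed $\la$. First I would prove \emph{uniqueness}: suppose $R_\la$ and $R_\la'$ both satisfy (1)--(3). Setting $L = R_\la - R_\la'$, the difference $L(x) = (R_\la(x) + \la(x)\nu) - (R_\la'(x) + \la(x)\nu)$ is a difference of two vectors each forward tangent to $\pd B$ at $x$; since $L(x) \in X$ and $\pd K = \pd B \cap X$, Lemma~\ref{l:tangent linear} shows $L(x)$ is tangent to $\pd K$ at every $x \in \pd K$. By Lemma~\ref{l:linear tangent vector field}, $e^{tL} \in \I(K)$ for all $t$, so $L \in T_e\I(K)$. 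But condition (3) forces $L \perp T_e\I(K)$, hence $L = 0$. This argument also explains why (3) is the right normalization: modulo $T_e\I(K)$, properties (1) and (2) already pin down $R_\la$.

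Next, \emph{existence}: for each $\la \in X^*$, if $\la = 0$ then $R_0 = 0$ works (conditions (1)--(3) are trivially satisfied since $0$ is tangent to $\pd B$ everywhere). If $\la \ne 0$, the construction in \S\ref{subsec:R_lambda} --- choosing the nearest points $F_k \in \I_{t_k\la}$ to $i_0$, extracting the limiting direction $W_\la$, and setting $R_\la = C^{-1}\pr_\nu \circ W_\la$ --- produces an operator satisfying exactly (1), (2), (3), as was verified there using Lemmas~\ref{l:Wla trivia} and~\ref{l:trace0}. So existence holds for every $\la$.

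Finally, \emph{linearity}. Homogeneity $R_{c\la} = c R_\la$ for $c \in \R$ is immediate: for $c > 0$ one checks that scaling $\la$ rescales the characterizing conditions consistently (or directly from the construction, since $\I_{t_k(c\la)} = \I_{(ct_k)\la}$), for $c = 0$ it is trivial, and for $c < 0$ one notes that $-R_\la$ satisfies (1)--(3) with $-\la$ in place of $\la$ because $v$ is forward tangent iff... actually here one must be slightly careful --- forward tangency is not symmetric --- but $R_{-\la}(x) + (-\la)(x)\nu = -(R_\la(x) + \la(x)\nu)$ only gives the \emph{backward} tangent direction, so instead I would derive negative homogeneity from additivity. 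For additivity, fix $\la, \mu \in X^*$ and consider $R_\la + R_\mu$: it has trace zero by linearity of trace, it is orthogonal to $T_e\I(K)$, but it is \emph{not} obvious that $(R_\la + R_\mu)(x) + (\la+\mu)(x)\nu$ is forward tangent to $\pd B$ --- the sum of two forward tangent vectors need not be forward tangent in general. The main obstacle is precisely this: one cannot simply add the tangency conditions. The way around it is to pass to the affine span of tangent vectors and use that at a point $x \in \pd B$ the set of vectors $v$ with $v$ and $-v$ both forward tangent forms a linear space (Lemma~\ref{l:tangent linear}), but the forward-tangent vectors $R_\la(x)+\la(x)\nu$ by themselves only lie in a convex cone. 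I expect one must instead prove linearity at the level of the construction --- for instance by showing that the limiting object $W_\la$ arising from the variational problem depends linearly on $\la$, which would follow from analyzing the first-order behavior of the family of orbits $\I_{t\la}$ near $i_0$ and identifying $W_\la$ with a genuine derivative (a tangent vector to the ``bundle'' of cross-section-equivalence manifolds), after which linearity is automatic. Granting such an identification --- or an independent argument that the operator characterized by (1)--(3) varies linearly, e.g. by expressing $R_\la$ via a fixed linear projection applied to a $\la$-linear family --- completes the proof; the uniqueness already proved guarantees that whatever linear family one constructs must coincide with $\{R_\la\}$.
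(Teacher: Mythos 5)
Your skeleton (uniqueness from orthogonality to $T_e\I(K)$, existence from the construction in \S\ref{subsec:R_lambda}) matches the paper, but there are two genuine gaps. First, in the uniqueness step you apply Lemma~\ref{l:tangent linear} to $L(x)=\bigl(R_\la(x)+\la(x)\nu\bigr)-\bigl(R'_\la(x)+\la(x)\nu\bigr)$, but that lemma concerns the set of \emph{two-sided} tangent vectors; the difference of two merely \emph{forward} tangent vectors need not be tangent (you yourself observe, in the linearity discussion, that forward tangent vectors only form a cone). Subadditivity of $\pd_x^+\Psi$ yields only the one-sided bound $\pd_x^+\Psi(L(x))\ge \pd_x^+\Psi\bigl(R_\la(x)+\la(x)\nu\bigr)-\pd_x^+\Psi\bigl(R'_\la(x)+\la(x)\nu\bigr)=0$, not tangency. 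The repair is global rather than pointwise: this inequality makes $\Psi$ non-decreasing along trajectories of the linear field $L$, hence $e^{-tL}(K)\subseteq K$ for $t\ge 0$; since $\tr L=0$ the maps $e^{-tL}$ preserve volume, so $e^{-tL}(K)=K$, thus $L\in T_e\I(K)$, and condition (3) forces $L=0$. This is exactly the mechanism the paper uses (applied there to $R_\la+R_{-\la}$).

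Second, you leave linearity unproved and propose to re-enter the variational construction of $W_\la$. The missing idea is much cheaper, and you brush right past it: apply the same subadditivity--trace--volume argument to $L=R_\la+R_{-\la}$, where $R_{-\la}$ is the operator produced by the construction for $-\la$. Here subadditivity gives $\pd_x^+\Psi(L(x))\le 0$, and the argument above yields $L\in T_e\I(K)$, hence $L=0$, i.e.\ $R_{-\la}=-R_\la$. Consequently $-\bigl(R_\la(x)+\la(x)\nu\bigr)=R_{-\la}(x)+(-\la)(x)\nu$ is \emph{also} forward tangent, so condition (2) upgrades to two-sided tangency. After this upgrade all three conditions are linear in $\la$ (now Lemma~\ref{l:tangent linear} genuinely applies to sums), and the uniqueness already established forces $\la\mapsto R_\la$ to be linear; no analysis of the first-order behavior of the orbits $\I_{t\la}$ is needed.
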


\begin{proof}
The existence follows from \eqref{e:defRla}
and subsequent arguments
(for $\la=0$, define $R_\la=0$).
Before showing uniqueness, we first prove that $R_\la+R_{-\la}=0$
for any pair of operators $R_\la$ and $R_{-\la}$ satisfying (1)--(3)
with parameters $\la$ and $-\la$, respectively.

Consider $L=R_\la+R_{-\la}$
and let $\Psi$ be the Minkowski norm on $V$ associated to~$B$.
As  $\partial_x^+ \Psi$ is a subadditive function on $X$, we have
$$
\pd^+_x \Psi(L(x)) 
= \pd^+_x \Psi\big(R_\la(x) + R_{-\la}(x)\big)
\le \pd^+_x \Psi\big(R_\la(x) + \lambda(x)\nu\big)
+ \pd^+_x \Psi\big(R_{-\la}(x) - \la(x)\nu\big)
= 0
$$
where the last identity follows from (2) and Lemma \ref{l:tangent cone}.
With this inequality and the identity $\tr L=0$ at hand,
we argue as in Lemma \ref{l:linear tangent vector field}. 
Namely, we regard $L$ as a linear vector field on $X$ and observe that,
due to the above relation $\pd^+_x \Psi(L(x))\le 0$,
$\Psi$ is non-increasing along trajectories of $L$.
Therefore $e^{tL}(K)\subset K$ for all $t\ge 0$.
On the other hand, since $\tr L=0$, the operators $e^{tL}$
preserve the volume on~$X$. 
Since $e^{tL}(K)$ is contained in $K$ and has the same volume,
we conclude that in fact $e^{tL}(K)=K$ for all $t\ge 0$.
Hence the 1-parameter group $\{e^{tL}\}$ is contained in $\I(K)$,
therefore $L\in T_e\I(K)$, and then the orthogonality (3) implies that $L=0$.

Thus we have shown that $R_\la+R_{-\la}=0$ for any choice of $R_\la$ and $R_{-\la}$.
This implies that the choice of $R_\la$ is, in fact, unique.

The same identity $R_\la+R_{-\la}=0$ shows that one can replace
``forward tangent'' in (2) by ``tangent'' (see Definition \ref{d:forward tangent}).
Upon this replacement all conditions (1)--(3) become linear in $\la$,
see Lemma~\ref{l:tangent linear}.
This and the uniqueness of $R_\la$ imply that $R_\la$ depends on $\la$ linearly.
\end{proof}

\subsection*{Proof of Proposition \ref{prop:Rexists}}
Define $R\co X^*\to\Hom(X,X)$ by $R(\la)=R_\la$
where $R_\la$ is provided by Proposition \ref{prop:Rdef}.
The requirements of Proposition \ref{prop:Rexists}
follow from the respective properties of $R_\la$ in Proposition \ref{prop:Rdef}.
Recall that the above constructions work for all $X\in\Gr_n(V)$ where
the cross-section area is differentiable, and this condition is
satisfied almost everywhere on $\Gr_n(X)$.
This finishes the proof of Proposition~\ref{prop:Rexists}.
\qed

\begin{remark}
The arguments in this section do not require \textit{all} cross-sections of $B$
to be linearly equivalent.
They work just as well if we only assume this for hyperplanes
from an arbitrarily small neighborhood of $X$ in $\Gr_n(V)$.
Thus we also have the following local version of Proposition \ref{prop:Rexists}
(compare with \cite{mono}):

{\it
Let $V$ be a vector space, $\dim V=n+1\ge 3$.
Let $B\subset V$ be a convex body with $0$ in the interior
and $\mathcal U\subset\Gr_n(V)$ a nonempty open set
such that all cross-sections of $B$
by hyperplanes from $\mathcal U$ are linearly equivalent.
Then for almost every $X\in\mathcal U$
there exist a vector $\nu\in V\setminus X$
and a linear map
$ R\co X^*\to\Hom(X,X) $
with the same properties as in Proposition \ref{prop:Rexists}.
}
\end{remark}

\section{Degenerate cases of Proposition \ref{prop:Rtangent}}\label{sec:degenerate}

Let $X$ be a vector space, $\dim X=n\ge 2$,
and let $R\co X^*\to\Hom(X,X)$ be a linear map
such that $\tr R(\la)=0$ for all $\la\in X^*$
(cf.\ Proposition~\ref{prop:Rtangent}).
As in the previous sections, we denote $R(\la)$ by $R_\la$.
To avoid repetitions of the formula $\tr R_\la=0$,
we regard $R$ as a map from $X^*$ to the space
$$
\sl(X)=\{L\in\Hom(X,X):\tr L=0\}.
$$

For  $p\in X$ we denote by $p^\bot$
the set $\{\la \in X^* \colon \lambda(p) = 0\}$ 
of covectors vanishing at~$p$.
Consider the family
$$
S^R=\{S^R_p\}_{p\in X}
$$
of linear maps $S^R_p\co p^\bot \to X$  defined by
\begin{equation}\label{e:SR}
 S^R_p(\la) = R_\la(p), \qquad \la\in p^\bot .
\end{equation}
We denote the image of $S^R_p$ by $T^R_p$, i.e.,
\begin{equation}\label{e:TR}
 T^R_p = S^R(p^\bot) = \{ R_\la(p) \mid \la\in p^\bot \} .
\end{equation}
The assumption \eqref{e:tangency assumption} of Proposition \ref{prop:Rtangent} can be
restated as follows:
for every $p\in\pd K$, 
$T^R_p$ consists of vectors
tangent to $\pd K$ at~$p$.

Clearly $T^R_p$ is a linear subspace of $X$ and $T^R_0=\{0\}$.
If $p\ne 0$, then $\dim T^R_p\le\dim p^\bot=n-1$.

\begin{defn}\label{d:degenerate point}
For a point $p\in X\setminus\{0\}$,
we say that $R$ is \textit{degenerate at $p$} if $\dim T^R_p<n-1$.
We say $R$ is \textit{degenerate} if there exists $p\in X\setminus\{0\}$
such that $R$ is degenerate at~$p$.
\end{defn}

The goal of this section is to prove Proposition \ref{prop:Rtangent}
assuming that $R$ is degenerate, see Proposition~\ref{p:all degenerate cases}.
We begin with a couple of algebraic lemmas that do not
depend on the dimension and do not involve~$K$.

\begin{lemma} \label{l:R is determined by SR}
A linear map $R\co X^*\to\sl(X)$ is uniquely determined by~$S^R$
(see \eqref{e:SR}).
In other words, if $R$ and $R'$ are linear maps from $X^*$ to $\sl(X)$
such that $S^R=S^{R'}$, then $R=R'$.
\end{lemma}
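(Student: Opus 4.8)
The plan is to pass to the difference $D = R - R'$, which is a linear map from $X^*$ to $\sl(X)$ satisfying, by hypothesis, $D_\la(p) = 0$ whenever $p \in X$ and $\la \in p^\bot$ (here $D_\la := D(\la)$). It suffices to show $D = 0$.

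First I would fix a nonzero $\la \in X^*$ and note that the stated condition says precisely that the operator $D_\la \co X \to X$ vanishes on the hyperplane $\ker\la$. Hence $D_\la$ factors through the one-dimensional quotient $X/\ker\la$, so there is a unique vector $w_\la \in X$ with $D_\la(x) = \la(x)\,w_\la$ for all $x \in X$. It then remains to understand how $w_\la$ depends on $\la$ and to show it vanishes.

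For the dependence, I would use that $\la \mapsto D_\la$ is linear. Given linearly independent $\la, \mu \in X^*$, the identity $D_{\la+\mu} = D_\la + D_\mu$ reads $(\la(x)+\mu(x))\,w_{\la+\mu} = \la(x)\,w_\la + \mu(x)\,w_\mu$ for every $x \in X$; evaluating at vectors $x$ with $(\la(x),\mu(x))$ equal to $(1,0)$ and then to $(0,1)$ gives $w_\la = w_{\la+\mu} = w_\mu$. Since $\dim X^* = n \ge 2$, any two nonzero covectors are either linearly independent or both linearly independent from a common third covector, so all the vectors $w_\la$ with $\la \ne 0$ coincide with one fixed vector $w \in X$, and therefore $D_\la(x) = \la(x)\,w$ for all $\la \in X^*$ and all $x \in X$.

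The last step is where the hypothesis that $R$ and $R'$ take values in $\sl(X)$, rather than just in $\Hom(X,X)$, is used — and it is genuinely needed. The operator $x \mapsto \la(x)\,w$ has trace $\la(w)$, so $\tr D_\la = 0$ forces $\la(w) = 0$ for all $\la \in X^*$, hence $w = 0$ and $D = 0$, i.e.\ $R = R'$. I do not expect a serious obstacle; the only point worth flagging is that the trace-zero assumption cannot be dropped — without it the nonzero map $\la \mapsto \la(\cdot)\,w$ for a fixed $w \ne 0$ has the same associated family $S$ — so this final step, rather than a formality, is the real content of the lemma.
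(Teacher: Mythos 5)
Your proposal is correct and follows essentially the same route as the paper: reduce to the vanishing of the difference, observe that each $D_\la$ factors through $X/\ker\la$ to give $D_\la(x)=\la(x)w_\la$, use linearity in $\la$ evaluated at suitable points to show $w_\la$ is independent of $\la$, and finish with the trace-zero condition to force $w=0$. Your closing remark that the trace hypothesis is genuinely needed (the map $\la\mapsto\la(\cdot)w$ being a counterexample otherwise) is a correct and worthwhile observation, though it is the same mechanism the paper exploits.
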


\begin{proof}
Since $S^R$ is linear in $R$, it suffices to show that $R=0$ whenever $S^R=0$.
The relation $S^R=0$ boils down to the following:
$R_\la(p) = 0$ for all $\lambda \in X^*$ and $p\in X$
such that $\lambda(p) = 0$.
Equivalently, ${\ker\la \subset \ker R_\la}$
for all $\la \in X^*$.
This inclusion implies that for every $\la\in X^*\setminus\{0\}$ 
there is a vector $v_\la \in X$ such that
$R_\la(x) = \la(x) v_\la$ for all $x\in X$.
We are going to show that $v_\la$ is independent of~$\la$.

Let $\la,\mu\in X^*\setminus\{0\}$.
Since $R_\la$ is linear in $\la$, we have
$$
0 = R_{\la+\mu}(x) - R_\la(x) - R_\mu(x)
= \la(x) (v_{\la+\mu}-v_\la) + \mu(x) (v_{\la+\mu}-v_\mu) 
$$
for all $x\in X$.
If $\la$ and $\mu$ are linearly independent then there exists $x_0\in X$
such that $\mu(x_0)=0$ and $\la(x_0)\ne 0$.
Substituting $x=x_0$ in the above identity we obtain that $v_{\la+\mu}=v_\la$.
Similarly we have $v_{\la+\mu}=v_\mu$, hence $v_\la=v_\mu$.
We have shown that $v_\la=v_\mu$ for any linearly independent $\la,\mu\in X^*$,
this implies that $v_\la$ is independent of $\la\in X^*\setminus\{0\}$.

Thus we have a fixed vector $v\in X$ such that $R_\la(x) = \la(x) v$
for all $\la\in X^*\setminus\{0\}$ and $x\in X$.
The trace of the operator $x\mapsto \la(x)v$ equals $\la(v)$,
hence $0 = \tr R_\la = \la(v)$ for all $\la\in X^*\setminus\{0\}$.
This implies that $v = 0$ and thus $R = 0$.
\end{proof}

For $\la,\mu\in X^*$ and a linear map $R\colon X^*\to\sl(X)$, define a quadratic homogeneous map
$V^R_{\la,\mu}\co X\to X$ by
\begin{equation}\label{e:VR}
V^R_{\la,\mu}(x) = \mu(x) R_\la(x) - \la(x) R_\mu(x) , \qquad x\in X.
\end{equation}
Note that $V^R_{\la,\mu}$ is bi-linear and skew-symmetric in $\la$ and $\mu$.
We regard $V^R_{\la,\mu}$ as a vector field on~$X$.

\begin{lemma} \label{l:VR}
Let $f_1,\dots,f_n$ be a basis of $X^*$ and
$V_{ij}=V^R_{f_i,f_j}$ for $i,j\in\{1,\dots,n\}$,
see \eqref{e:VR}.
Then
\begin{enumerate}
\item 
For every $p\in X$,
$$
T^R_p = \{ V^R_{\la,\mu}(p) \mid \la,\mu\in X^* \}
 = \linspan \{ V^R_{ij}(p) \mid 1 \le i<j \le n \}
$$
where $T^R_p$ is defined in \eqref{e:TR}.
\item 
$R$ is uniquely determined by the collection $\{V^R_{ij}\}_{1\le i<j\le n}$.
That is, if $R$ and $R'$ are linear maps from $X^*$ to $\sl(X)$
such that $V^R_{ij}=V^{R'}_{ij}$ for all $1\le i<j\le n$,
then $R=R'$.
In particular, if $V^R_{ij}=0$ for all $1\le i<j\le n$, then $R=0$.
\end{enumerate}
\end{lemma}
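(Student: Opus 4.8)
The plan is to establish claim~(1) first — from which all three displayed sets are seen to coincide for each fixed $p$ — and then derive claim~(2) as a formal consequence of claim~(1) together with Lemma~\ref{l:R is determined by SR}. For $p=0$ every set in claim~(1) equals $\{0\}$, since $R_\la$ is linear and $V^R_{\la,\mu}$ is homogeneous of degree two, so one may assume $p\ne 0$ throughout.

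The heart of claim~(1) is the identity, valid for all $\la,\mu\in X^*$,
\[
 V^R_{\la,\mu}(p) = \mu(p)\,R_\la(p) - \la(p)\,R_\mu(p) = R_{\,\mu(p)\la - \la(p)\mu}(p),
\]
which uses that $R$ is linear in its covector argument. Since the covector $\mu(p)\la - \la(p)\mu$ annihilates $p$, this shows $V^R_{\la,\mu}(p)\in T^R_p$, giving the inclusion $\{V^R_{\la,\mu}(p):\la,\mu\in X^*\}\subseteq T^R_p$. For the reverse inclusion I would take any $\nu\in p^\bot$, pick $\mu\in X^*$ with $\mu(p)=1$ (possible since $p\ne 0$), and observe that $V^R_{\nu,\mu}(p)=R_\nu(p)$; as $\nu$ ranges over $p^\bot$ this exhausts $T^R_p$. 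Hence $T^R_p = \{V^R_{\la,\mu}(p):\la,\mu\in X^*\}$, and in particular this set is a linear subspace. For the second equality, expanding $\la=\sum a_if_i$ and $\mu=\sum b_jf_j$ and using bilinearity and skew-symmetry of $V^R_{\cdot,\cdot}$ gives $V^R_{\la,\mu}=\sum_{i<j}(a_ib_j-a_jb_i)V^R_{ij}$, so every $V^R_{\la,\mu}(p)$ lies in $\linspan\{V^R_{ij}(p)\}$; conversely each $V^R_{ij}(p)=V^R_{f_i,f_j}(p)$ lies in the linear subspace $T^R_p$, so the span is contained in $T^R_p$. This closes claim~(1).

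For claim~(2), note that $R\mapsto V^R_{ij}$ is linear (being built from the linear maps $R\mapsto R_\la$), so it suffices to show $R=0$ when all $V^R_{ij}$ vanish identically. In that case claim~(1) yields $T^R_p=\linspan\{V^R_{ij}(p)\}=\{0\}$ for every $p\in X$, which by the definition of $T^R_p$ means $R_\la(p)=0$ whenever $\la(p)=0$; that is, $S^R=0$, and Lemma~\ref{l:R is determined by SR} then gives $R=0$. Applying this to $R-R'$ yields the general uniqueness statement. I do not expect any real obstacle: the argument is short multilinear bookkeeping, the only points needing a touch of care being the legitimacy of the choice of $\mu$ with $\mu(p)=1$ (which is exactly why we reduced to $p\ne0$) and checking that the hypothesis ``$S^R=0$'' produced at the end of claim~(2) is precisely the one required by Lemma~\ref{l:R is determined by SR}.
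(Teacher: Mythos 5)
Your proposal is correct and follows essentially the same route as the paper: the key identity $V^R_{\la,\mu}(p)=R_{\mu(p)\la-\la(p)\mu}(p)$ for one inclusion, the choice of $\mu$ with $\mu(p)=1$ for the reverse, bilinearity and skew-symmetry for the second equality, and a reduction to Lemma~\ref{l:R is determined by SR} for part~(2). The only (immaterial) difference is that you reduce part~(2) to the homogeneous case $V^R_{ij}=0$ via linearity of $R\mapsto V^R_{ij}$, whereas the paper shows directly that the collection $\{V^R_{ij}\}$ determines $S^R$.
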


\begin{proof}
The first claim is trivial for $p=0$.
Let $p\in X\setminus\{0\}$ and $\la,\mu\in X^*$.
The linearity of~$R$ and \eqref{e:VR} imply that
$V^R_{\la,\mu}(p) = R_\alpha(p)$ where $\alpha=\mu(p)\la-\la(p)\mu$.
Clearly $\alpha(p)=0$, thus $\alpha\in p^\bot$
and therefore $R_\alpha(p) = S^R_p(\alpha)\in T^R_p$. 
This proves the inclusion 
$\{ V^R_{\la,\mu}(p) \} \subseteq T^R_p$.
To prove the converse, choose $\mu\in X^*$ such that $\mu(p)=1$
and observe that $R_\la(p)=V^R_{\la,\mu}(p)$ for all $\la\in p^\bot$.
This and \eqref{e:TR} imply that $T^R_p \subseteq \{ V^R_{\la,\mu}(p) \}$
and the first equality in (1) follows.

The second equality in (1) follows from the fact that
$V^R_{\la,\mu}$ is bi-linear and skew-symmetric in $\la$ and~$\mu$.
Indeed, this fact implies that each $V^R_{\la,\mu}$
is a linear combination of $V^R_{ij}$, $1\le i<j\le n$.

To prove (2), observe again that the collection
$\{V^R_{ij}\}_{1\le i<j\le n}$
determines the family $\{ V^R_{\la,\mu} \}_{\la,\mu\in X^*}$
by means of linear combinations.
For $p\in X\setminus\{0\}$ and $\la\in p^\bot$
pick $\mu\in X^*$ such that $\mu(p)=1$ and
conclude again that $S^R_p(\la)=R_\la(p)=V^R_{\la,\mu}(p)$.
Thus $S^R$ is determined by the collection $\{V^R_{ij}\}_{1\le i<j\le n}$.
By Lemma~\ref{l:R is determined by SR}, $S^R$ uniquely determines $R$
and the claim follows.
\end{proof}

Now let $K\subset X$ be a convex body with 0 in the interior
satisfying the assumptions of Proposition~\ref{prop:Rtangent}
with respect to~$R$.
We emphasize the following implication of Lemma \ref{l:VR}:

\begin{lemma}\label{l:VRtangent}
Let $X$, $K$ and $R$ be as in Proposition~\ref{prop:Rtangent}
and let $V^R_{\la,\mu}$ be defined by \eqref{e:VR}.
Then for all $\la,\mu\in X^*$,
the vector field $V^R_{\la,\mu}$ is tangent to~$\pd K$.
\end{lemma}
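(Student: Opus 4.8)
The plan is to deduce Lemma~\ref{l:VRtangent} directly from the planewise hypothesis~\eqref{e:tangency assumption} by means of the pointwise identity $V^R_{\la,\mu}(p)=R_\alpha(p)$ already recorded in the proof of Lemma~\ref{l:VR}(1). First I would fix $\la,\mu\in X^*$ and an arbitrary point $p\in\pd K$; by the definition of a tangent vector field from Section~\ref{sec: polynomial vector fields} it suffices to show that $V^R_{\la,\mu}(p)$ is tangent to $\pd K$ at~$p$. Set $\alpha=\mu(p)\la-\la(p)\mu\in X^*$. Then $\alpha(p)=\mu(p)\la(p)-\la(p)\mu(p)=0$, so $p\in\ker\alpha$, and by the linearity of $R$ we have $R_\alpha=\mu(p)R_\la-\la(p)R_\mu$, whence $R_\alpha(p)=\mu(p)R_\la(p)-\la(p)R_\mu(p)=V^R_{\la,\mu}(p)$ by the definition~\eqref{e:VR}.

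Next I would simply invoke the hypothesis. Since $p\in\pd K\cap\ker\alpha$, assumption~\eqref{e:tangency assumption} applied with the covector $\alpha$ says that $R_\alpha(p)$ is tangent to $\pd K$ at~$p$; equivalently, $R_\alpha(p)=S^R_p(\alpha)$ lies in $T^R_p$, and $T^R_p$ consists of vectors tangent to $\pd K$ at~$p$. Either way, $V^R_{\la,\mu}(p)=R_\alpha(p)$ is tangent to $\pd K$ at~$p$. As $p\in\pd K$ was arbitrary, this means precisely that $V^R_{\la,\mu}$ is tangent to $\pd K$, which is the claim.

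There is no serious obstacle here: the lemma is an immediate repackaging of~\eqref{e:tangency assumption}, the whole point being that the auxiliary covector $\alpha=\mu(p)\la-\la(p)\mu$ annihilates $p$, so that the planewise tangency hypothesis fires at~$p$. The only minor subtlety worth flagging is that $\alpha$ depends on the chosen point~$p$; but since tangency of a vector field is a pointwise condition this causes no difficulty. (Linearity and skew-symmetry of $V^R_{\la,\mu}$ in $\la,\mu$, recorded after~\eqref{e:VR}, and the spanning statement of Lemma~\ref{l:VR}(1) are not needed for this particular argument and are used only in later sections.)
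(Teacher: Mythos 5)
Your proposal is correct and follows essentially the same route as the paper: the paper's proof also reduces to the observation (recorded in Lemma~\ref{l:VR}(1)) that $V^R_{\la,\mu}(p)=R_\alpha(p)$ for $\alpha=\mu(p)\la-\la(p)\mu\in p^\perp$, and then invokes~\eqref{e:tangency assumption}. You have merely inlined the citation of Lemma~\ref{l:VR}(1); the argument is the same.
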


\begin{proof}
Let $p\in\pd K$.
By Lemma \ref{l:VR}(1) we have $V^R_{\la,\mu}(p)\in T^R_p$.
By \eqref{e:tangency assumption}
and the definition of $T^R_p$ (see \eqref{e:TR}),
all vectors from $T^R_p$ are tangent to $\pd K$ at $p$,
hence the result.
\end{proof}

From now on, we restrict ourselves to dimension $n=3$.

\subsection{Degeneration everywhere}
In this subsection we handle the case when $R$ is degenerate at all points.
Our goal is to prove the following.

\begin{prop}\label{p:fully degenerate}
Let $X$, $K$ and $R$ be as in Proposition~\ref{prop:Rtangent}
and assume that $R$ is degenerate at all points $p\in X\setminus\{0\}$
in the sense of Definition \ref{d:degenerate point}.
Then the conclusion of Proposition~\ref{prop:Rtangent} holds,
namely for every $\la\in X^*$ the map $R_\la$
regarded as vector field on $X$ is tangent to~$\pd K$.
\end{prop}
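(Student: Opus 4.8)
The plan is to exploit the full degeneracy in dimension $3$ to conclude that $R$ is forced into a very rigid algebraic shape, and then to invoke the two-dimensional result Lemma~\ref{l:cubic vector field in the plane} on suitable planar cross-sections of $K$. Since $\dim X = 3$, for a nonzero $p$ the space $p^\bot$ is two-dimensional, so ``$R$ degenerate at $p$'' means $\dim T^R_p \le 1$; equivalently $S^R_p \colon p^\bot \to X$ has rank $\le 1$, i.e. it has a nontrivial kernel. So for every $p \ne 0$ there is a covector $\mu_p \in p^\bot \setminus \{0\}$ with $R_{\mu_p}(p) = 0$. The first step is to turn this pointwise statement into an algebraic identity. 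Picking a basis $f_1, f_2, f_3$ of $X^*$ and forming the three quadratic fields $V_{12}, V_{13}, V_{23}$ as in Lemma~\ref{l:VR}, degeneracy at $p$ says exactly that the three vectors $V_{ij}(p) \in X$ span a subspace of dimension $\le 1$, hence are pairwise collinear; equivalently all $2\times 2$ minors of the $3\times 3$ matrix with columns $V_{12}(p), V_{13}(p), V_{23}(p)$ vanish. These minors are homogeneous polynomials in $p$ (of degree $4$), so full degeneracy means they vanish identically on $X$.

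The second step is to extract a linear vector field from this. By Lemma~\ref{l:VRtangent} each $V_{ij}$ is tangent to $\pd K$, and Lemma~\ref{l:tangent vector field} shows $\Psi$ (the Minkowski norm of $K$) is constant along trajectories of each $V_{ij}$, in particular $\pd^+_x \Psi (V_{ij}(x)) = 0$ for all $x$. The collinearity identity means that on the Zariski-dense open set where some $V_{ij}(x) \ne 0$, all three $V_{ij}(x)$ lie on one line $\ell(x)$ through the origin in $X$; one shows this line field is ``algebraic'' and in fact determined by a single quadratic field $Q$ together with scalar factors $f_{ij}$, i.e. $V_{ij} = f_{ij} \cdot Q$ for homogeneous polynomials $f_{ij}$ of degree $2$ (after checking that the common direction can be chosen polynomially — this uses that $\dim X = 3$ so we are looking at a line field in $3$-space defined by quadrics). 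Actually, since the $V_{ij}$ are quadratic, the natural outcome is $V_{ij} = \ell_{ij} \cdot L_{ij}$ or a common factorization $V_{ij} = c_{ij}\,W$ for a single quadratic $W$ and constants $c_{ij}$, unless all $V_{ij}$ vanish — in which case $R = 0$ by Lemma~\ref{l:VR}(2) and we are done. Assuming not all vanish, we obtain a single nonzero quadratic vector field $W$ on $X$, tangent to $\pd K$ by Lemma~\ref{l:ae tangent vector field} (dividing off the polynomial factor), with the property that every $V_{ij}$ is a scalar multiple of $W$.

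The third step reconstructs $K$ from $W$. Since $T^R_p = \linspan\{V_{ij}(p)\} = \R\, W(p)$ for generic $p$ (by Lemma~\ref{l:VR}(1)), the tangency assumption \eqref{e:tangency assumption} says $W(p)$ is tangent to $\pd K$ at every $p \in \pd K$ — but that is already known; the real content is that $W$ alone determines $R$ up to scalars and hence controls everything. To finish I would cut $K$ by a generic $2$-plane $\Pi$ through $0$ on which $W$ is (generically) tangent — or rather, analyze the structure of $W$ directly. A nonzero quadratic vector field $W$ tangent to a convex body $\pd K$ in $\R^3$ which arises this way: restricting to an invariant plane (one exists because quadratic maps $\R^3 \to \R^3$ have nontrivial fixed directions / eigenplane structure), we get a planar quadratic field tangent to a planar convex section, hence by Lemma~\ref{l:cubic vector field in the plane} that section is an ellipse. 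Iterating this over a family of planes forces $\pd K$ to contain enough ellipses; combined with the fact that $R$ (hence $K$) is rigidly pinned down by $W$, one concludes $K$ is an ellipsoid, and then Lemma~\ref{l:Rtangent for ellipsoid} gives the conclusion of Proposition~\ref{prop:Rtangent}.

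The main obstacle I anticipate is the passage from the pointwise rank-$\le 1$ condition on $\{V_{ij}\}$ to a clean global factorization $V_{ij} = c_{ij} W$ with a \emph{single} quadratic field $W$: a priori the common direction line $\ell(x)$ could twist, and one must rule out the line field being genuinely nonpolynomial or the $V_{ij}$ sharing a lower-degree polynomial factor with mismatched cofactors. Handling the degenerate sub-cases (some $V_{ij}$ identically zero, or $W$ vanishing on a plane) carefully, and making the invariant-plane / ellipse-family argument robust against these, is where the bulk of the work lies.
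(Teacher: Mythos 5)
There is a genuine gap, and in fact the endgame of your plan aims at a false statement. In the fully degenerate case $K$ need \emph{not} be an ellipsoid: the correct outcome (and the paper's) is that either $R=0$ or $R_\la=C_\la L$ for a single linear vector field $L$ tangent to $\pd K$ and a linear functional $\la\mapsto C_\la$; the tangency of $L$ only forces $K$ to admit a one-parameter group of linear self-equivalences, i.e.\ to be an affine body of revolution. Any non-ellipsoidal body of revolution, with $L$ the generator of its rotations, satisfies the hypotheses of the fully degenerate case with $R\neq 0$. So your Step 3, which tries to ``conclude $K$ is an ellipsoid'' and then invoke Lemma~\ref{l:Rtangent for ellipsoid}, cannot succeed; the proof must instead establish the tangency of $R_\la$ directly by exhibiting $R_\la$ as a scalar multiple of a tangent linear field. (Your invariant-plane argument for a quadratic field on $\R^3$ is also unsupported: quadratic maps need not have invariant planes.)

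The factorization step is also not closed, and the case you settle on is the impossible one. The pointwise rank-$\le 1$ condition on the matrix with columns $V_1=V^R_{23},V_2=V^R_{31},V_3=V^R_{12}$ yields, after passing to the fraction field of $\R[x_1,x_2,x_3]$ and clearing denominators using unique factorization, a trichotomy according to the degree split $2=d+(2-d)$: (i) $V_i=C_i Q$ with constants $C_i$ and a quadratic field $Q$; (ii) $V_i=\ell_i L$ with linear forms $\ell_i$ and a linear field $L$; (iii) $V_i=Q_i v$ with quadratic forms $Q_i$ and a fixed vector $v$. You anticipate only versions of (i), but the syzygy
\begin{equation*}
x_1V_1(x)+x_2V_2(x)+x_3V_3(x)=0,
\end{equation*}
which follows immediately from the definition of the $V_i$ and which your proposal does not use, kills case (i) outright (it forces $Q=0$ or $C_1=C_2=C_3=0$, hence $R=0$ by Lemma~\ref{l:VR}(2)). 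Case (iii) is ruled out because a nonzero constant field cannot be tangent to the boundary of a bounded convex body (Lemma~\ref{l:ae tangent vector field}). The substantive case is (ii): there the syzygy forces the matrix $(C_{ij})$ of coefficients of the $\ell_i$ to be antisymmetric, and one then verifies via Lemma~\ref{l:VR} that $R_\la=C_\la L$ with $C_\la=C_{32}\la_1+C_{13}\la_2+C_{21}\la_3$, whence $R_\la$ is tangent to $\pd K$ because $L$ is (Lemmas~\ref{l:ae tangent vector field} and~\ref{l:VRtangent}). Without the trichotomy and the syzygy your argument does not determine the structure of $R$, and with your intended conclusion it proves the wrong thing.
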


\begin{proof}
Since $n=3$, the degeneracy means that $\dim T^R_p\le 1$ for all $p\in X$.
Fix a basis $e_1,e_2,e_3$ of~$X$ and the corresponding
dual basis $f_1, f_2, f_3$ of $X^*$.
Define quadratic vector fields $V_1,V_2,V_3$ on $X$ by
$$
 V_1 = V^R_{23}, \qquad
 V_2 = V^R_{31}, \qquad
 V_3 = V^R_{12} 
$$
where $V^R_{ij}=V^R_{f_i,f_j}$ as in Lemma \ref{l:VR}.
For $x=\sum x_i e_i$ the definitions expand to
\begin{equation}\label{e:defVi}
\begin{aligned}
 V_1(x) &= x_3 R_{f_2}(x) - x_2 R_{f_3}(x), \\
 V_2(x) &= x_1 R_{f_3}(x) - x_3 R_{f_1}(x), \\
 V_3(x) &= x_2 R_{f_1}(x) - x_1 R_{f_2}(x).
\end{aligned}
\end{equation}
Observe that
\begin{equation}\label{e:Virelation}
x_1V_1(x) + x_2V_2(x) + x_3 V_3(x) = 0
\end{equation}
for all $x=\sum x_ie_i\in X$; this follows immediately from \eqref{e:defVi}.

By Lemma \ref{l:VRtangent}, the vector fields $V_1,V_2,V_3$ are tangent to $\pd K$.
By Lemma \ref{l:VR}(1) and the assumption that $\dim T^R_p\le 1$,
the vectors $V_i(p)$, $i=1,2,3$, are collinear for every $p\in X$.
The following algebraic lemma characterizes such triples of vector fields.

\begin{lemma}\label{l:3 collinear fields}
Let $V_1,V_2,V_3$ be quadratic homogeneous vector fields on $X\simeq\R^3$
such that
\begin{equation}\label{e:3fields dim1}
 \dim \linspan \{ V_1(x), V_2(x), V_3(x) \} \le 1 
\end{equation}
for all $x\in X$.
Then at least one of the following holds:
\begin{enumerate}
\item
There exist a quadratic vector field $Q$ on $X$ and constants
$C_1, C_2, C_3 \in \R$ such that $V_i = C_i\cdot Q$ for $i = 1, 2, 3$.
\item
There exist a linear vector field $L$ on $X$ and linear functions
$\ell_1, \ell_2, \ell_3 \in X^*$ such that
$V_i = \ell_i\cdot L$ for $i = 1, 2, 3$.
\item
There exist a vector $v \in X$ and quadratic forms
$Q_1, Q_2, Q_3 \colon X \to \R$ such that $V_i =Q_i \cdot v$ for $i = 1, 2, 3$.
\end{enumerate}
\end{lemma}

\begin{proof}
We identify $X$ with $\R^3$ and regard $V_1,V_2,V_3$ as $\R^3$-valued
polynomials in variables $x_1,x_2,x_3$ which represent coordinates in~$\R^3$.
Consider a $3 \times 3$ matrix $\mathbf V$ whose
columns are composed of coordinate components of $V_1,V_2,V_3$.
We assume that $\mathbf V\ne 0$, otherwise the lemma is trivial.

The entries of $\mathbf V$ are quadratic forms in $x_1,x_2,x_3$.
By \eqref{e:3fields dim1} all the $2\times 2$ minors of $\mathbf V$
vanish pointwise, hence they are zero as polynomials.
We now consider $\mathbf V$ as a matrix over the field $\R(x_1,x_2,x_3)$
of rational functions and conclude that its rank is no greater than~1.
This implies that $\mathbf V$ can be represented as a matrix product
\begin{equation}\label{e:rank1product}
\mathbf V = \begin{pmatrix}
    F_1\\
    F_2\\
    F_3
\end{pmatrix} \cdot 
\begin{pmatrix}
    G_1 & G_2 & G_3 
\end{pmatrix}
\end{equation}
where $F_1, F_2, F_3, G_1, G_2, G_3$ are some rational functions. Since the ring $\R[x_1,x_2,x_3]$ is a unique factorization domain and $\R(x_1, x_2, x_3)$ is its field of fractions, all $F_i$ and $G_j$ can in fact be chosen
to be polynomials rather than general rational functions.  

Indeed, represent all $F_i$ and $G_j$ as irreducible fractions of polynomials
and assume that some of them, say $F_1$, has a nontrivial denominator.
Let $q$ be a prime factor of this denominator.
Since $F_1 \cdot G^j$ is a polynomial for every $j$,
the polynomial $q$ should divide numerators of all $G^j$.
Now replace each $F_i$ by $F_iq$ and each $G_j$ by~$G_j/q$.
This operation preserves the identity \eqref{e:rank1product}
and reduces the total number of prime factors
in the denominators.
It can be applied to any of $F_i$ and $G_j$ in place of $F_1$
and repeated until all nontrivial denominators disappear.
Thus we may assume that $F_i$ and $G_j$ in \eqref{e:rank1product}
are polynomials.

Since $\mathbf V\ne 0$, at least one of the products $F_iG_j$ is nonzero.
Assume without loss of generality that $F_1G_1\ne 0$.
Since $F_1G_1$ is a homogeneous polynomial of degree~2,
the polynomials $F_1$ and $G_1$ are themselves homogeneous
and $\deg F_1 + \deg G_1 = 2$. Let $d=\deg F_1$, then $\deg G_1=2-d$.
For every nonzero $F_i$, the product $F_iG_1$ is a nonzero homogeneous
polynomial of degree~2, hence $F_i$ is homogeneous of degree $2-(2-d)=d$.
If any of $F_i$ is zero, then it is a homogeneous polynomial of any degree,
in particular of degree $d$.
Similarly, every $G_j$ is a homogeneous polynomial of degree~$2-d$.
Depending on the value of $d\in\{0,1,2\}$,
the identity \eqref{e:rank1product}
now translates to one of the possibilities (1), (2) and (3)
listed in the lemma.
\end{proof}

Now we continue the proof of Proposition \ref{p:fully degenerate}.
If $V_1=V_2=V_3=0$ then $R=0$ by Lemma \ref{l:VR}(2)
and the proposition follows trivially.
Assume that at least one of $V_1,V_2,V_3$ is nonzero
and consider the three cases from Lemma \ref{l:3 collinear fields}.

\medskip
{\bf Case 1:} $V_i = C_iQ$ where $C_i\in\R$
and $Q$ is a quadratic vector field on~$X$.
In this case the relation \eqref{e:Virelation} takes the form
$$
(C_1x_1 + C_2x_2 + C_3x_3) \cdot Q(x) = 0
$$
for all $x=\sum x_ie_i\in X$.
This identity implies that $Q=0$ or $C_1=C_2=C_3=0$.
In both cases it follows that $V_1=V_2=V_3=0$,
contrary to our assumption.

\medskip
{\bf Case 2:} $V_i = \ell_i L$ where $\ell_i\in X^*$ and $L$
is a linear vector field.
Assume without loss of generality that $V_1\ne 0$.
Then $\ell_1\ne 0$ and $L\ne 0$.
Recall that $V_1$ is tangent to $\pd K$ by Lemma \ref{l:VRtangent}.
This and Lemma \ref{l:ae tangent vector field} imply that
$L$ is tangent to~$\pd K$. Then $\tr L=0$ by Lemma \ref{l:linear tangent vector field}.

Write $\ell_i$ as $\ell_i(x) = \sum_j C_{ij} x_j$
and substitute into \eqref{e:Virelation}:
\begin{equation}\label{e:Vi relation case 2}
\left(\sum_{1 \le i, j \le 3} C_{ij} x_ix_j\right) \cdot L(x) = 0
\end{equation}
for all $x=\sum x_ie_i\in X$.
Since $L \ne 0$, the first factor in \eqref{e:Vi relation case 2} vanishes for all~$x$.
Therefore $C_{ii}=0$ and $C_{ij} + C_{ji} = 0$ for $i \ne j$.

We now claim that, for every $\la=\sum\la_if_i\in X^*$,
\begin{equation}\label{e:R in degenerate subcase 2}
R_\la = C_\la L \quad\text{where} \quad
C_\la=C_{32}\la_1 + C_{13}\la_2 + C_{21} \la_3 .
\end{equation}
To prove this, define $R'(\la)=C_\la L$ and observe that
$ \tr R'(\la) = C_\la \tr L = 0 $,
so $R'$ is a linear map from $X^*$ to $\sl(X)$.
Further, for $V^{R'}_{ij}$ defined in Lemma \ref{l:VR}
and $x=\sum x_ie_i$ we have
$$
 V^{R'}_{12}(x) = x_2 R'(f_1)(x) - x_1 R'(f_2)(x)
 = (C_{32}x_2 - C_{13}x_1) L(x) = \ell_3(x) L(x)
 = V_3(x) = V^R_{12}(x) .
$$
Here we used the identities $C_{13}=-C_{31}$ and $C_{33}=0$.
Similarly (via cyclic permutation of indices),
$V^{R'}_{23}=V^R_{23}$ and $V^{R'}_{31}=V^R_{31}$.
By Lemma \ref{l:VR} these identities imply that $R'=R$ and
\eqref{e:R in degenerate subcase 2} follows.

As $L$ is tangent to $\pd K$,
\eqref{e:R in degenerate subcase 2}
implies that $R_\la$ is tangent to $\pd K$ as well.
Thus Proposition \ref{p:fully degenerate} holds in this case.

\medskip
{\bf Case 3:} $V_i=Q_iv$ where $Q_i$ is a quadratic form
and $v$ is a fixed vector.
Assume without loss of generality that $V_1\ne 0$.
Then $Q_1\ne 0$ and $v\ne 0$.
Since $V_1$ is tangent to $\pd K$, Lemma \ref{l:ae tangent vector field}
applied to the constant vector field $v$ and a polynomial factor $Q_1$
implies that the Minkowski norm
associated to $K$ is constant along the ray $t\mapsto tv$ ($t>0$),
a contradiction.
This completes the proof of Proposition \ref{p:fully degenerate}.
\end{proof}

\subsection{Degeneration at one point}
In this subsection we consider the case when $R$
is degenerate at some point but not at all points.
Our goal is to prove the following.

\begin{prop} \label{p:partly degenerate}
Let $X$, $K$ and $R$ be as in Proposition~\ref{prop:Rtangent}.
Assume that there exist $p_0,p_1\in X\setminus\{0\}$
such that $R$ is degenerate at $p_0$ and non-degenerate at $p_1$
(see Definition \ref{d:degenerate point}).
Then $K$ is an ellipsoid.
\end{prop}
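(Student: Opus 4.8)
The plan is to pick a basis adapted to the degenerate point $p_0$, convert the rank drop of $S^R_{p_0}$ (see \eqref{e:SR}) into a concrete vanishing of the quadratic vector fields of \S\ref{sec:degenerate}, and then push the algebraic analysis behind Proposition~\ref{p:fully degenerate} one step further. Since degeneracy is invariant under rescaling, assume $p_0\in\pd K$ and choose a basis $e_1,e_2,e_3$ of $X$ with $e_3=p_0$. As $S^R_{p_0}\colon p_0^\bot\to X$ has rank $\le 1$, its kernel is a nonzero subspace of $p_0^\bot$; choose the dual basis $f_1,f_2,f_3$ so that $f_1$ lies in it, i.e.\ $R_{f_1}(p_0)=0$. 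With $V_1,V_2,V_3$ as in \eqref{e:defVi}, one computes $V_2(te_3)=-t^2R_{f_1}(e_3)=0$ and $V_3(te_3)=0$, so both $V_2$ and $V_3$ vanish on the line $\ell_0=\R p_0$ (the vanishing of $V_3$ there is automatic, the one of $V_2$ uses the normalization), while $V_1(te_3)=t^2R_{f_2}(e_3)$ vanishes on $\ell_0$ precisely when $\dim T^R_{p_0}=0$.

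Next I would combine these vanishings with the relation \eqref{e:Virelation}, the constraints $\tr R_\la=0$, and the reconstruction lemma (Lemma~\ref{l:VR}(2)) to reduce $R$ to a short list of normal forms, split according to $\dim T^R_{p_0}\in\{0,1\}$ and to the position of the line $T^R_{p_0}$ relative to $\ell_0$ and to $\mathrm{span}(e_1,e_2)$. The degeneracy is exactly what produces the ``cancellations'': in each normal form I expect one of the quadratic vector fields $V^R_{\la,\mu}$ — all tangent to $\pd K$ by Lemma~\ref{l:VRtangent} — either to vanish identically, or to factor as $fW$ with $f$ a nonzero homogeneous polynomial and $W$ of degree $\le 1$, or to vanish at some point of $X\setminus\{0\}$. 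If all the $V^R_{ij}$ vanish then $R=0$ by Lemma~\ref{l:VR}(2), hence $R$ is degenerate at every point, contradicting the non-degeneracy at $p_1$; so this case is vacuous. Otherwise Lemma~\ref{l:ae tangent vector field} (and, when $W$ is quadratic, a restriction to a $W$-invariant plane followed by Lemma~\ref{l:cubic vector field in the plane}) yields either a nonzero linear vector field $L$ tangent to $\pd K$, or a $2$-dimensional cross-section of $K$ that is forced to be an ellipse.

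In the first outcome, Lemma~\ref{l:linear tangent vector field} places the nontrivial one-parameter group $\{e^{tL}\}_{t\in\R}$ inside the compact group $\I(K)$, so after a linear change of coordinates $K$ is an affine body of revolution about an axis, with $L$ its infinitesimal rotation. Here I would use the non-degeneracy at $p_1$ once more: it forces $R$ to have a component that is not proportional to $L$, and feeding such a component, together with a boundary point lying off the axis, into \eqref{e:tangency assumption} and propagating the tangency by the revolution symmetry produces a quadratic (after factoring out $L$, at most cubic) vector field tangent to a meridian cross-section of $K$ and vanishing at a point of it other than the origin; by Lemma~\ref{l:cubic vector field in the plane} the meridian is then an ellipse, so the profile of the revolution body is a quadratic arc and $\pd K$ is a quadric, i.e.\ $K$ is an ellipsoid. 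In the second outcome one argues analogously, the ellipse cross-section playing the role of the meridian. Either way $K$ is an ellipsoid, which is the assertion of Proposition~\ref{p:partly degenerate}; Lemma~\ref{l:Rtangent for ellipsoid} is then available, but is not needed here.

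I expect the hard part to be the middle step: organizing the branch analysis so that, in every case, the cancellations forced by $\dim T^R_{p_0}\le 1$ genuinely deliver a linear tangent field (or an ellipse cross-section), and checking that a ``bad'' normal form is never simultaneously compatible with degeneracy at $p_0$ and non-degeneracy at $p_1$. The computations involve many quadratic forms coming from the $V^R_{\la,\mu}$, and the dual-basis bookkeeping is delicate; by comparison, the passage from a rotational symmetry to the ellipsoidal profile is comparatively routine once Lemmas~\ref{l:linear tangent vector field} and \ref{l:cubic vector field in the plane} are at hand.
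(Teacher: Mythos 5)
Your first paragraph is a correct warm-up computation, and your toolbox (the fields $V^R_{\la,\mu}$, Lemmas~\ref{l:VRtangent}, \ref{l:ae tangent vector field}, \ref{l:linear tangent vector field}, \ref{l:cubic vector field in the plane}) is the right one, but the proof has a genuine gap exactly where you flag ``the hard part'': the middle step is never carried out, and the plan for it has a structural problem. Lemma~\ref{l:cubic vector field in the plane} applies to polynomial vector fields \emph{on a plane}, whereas the $V^R_{ij}$ are fields on $X\simeq\R^3$; restricting them to a $2$-plane $Z$ does not produce a vector field on $Z$, since their values need not lie in $Z$. Your fallback of ``a restriction to a $W$-invariant plane'' is not justified --- there is no reason such an invariant plane exists. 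The paper's proof resolves precisely this point with a concrete construction you are missing: on a plane $Z$ containing a degenerate direction $e_1\in\Sigma$ and a non-degenerate direction $e_2\notin\Sigma$, it sets $F=R_{f_3}|_Z$, $G(x)=x_2R_{f_1}(x)-x_1R_{f_2}(x)$, and forms $W(x)=f_3(G(x))F(x)-f_3(F(x))G(x)$, which by design takes values in $Z$, is cubic, is tangent to $\pd(K\cap Z)$ because $\linspan\{F(x),G(x)\}=T^R_x$, vanishes at $e_1$ (degeneracy) and not at $e_2$ (non-degeneracy); Lemma~\ref{l:cubic vector field in the plane} then makes $K\cap Z$ an ellipse. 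Without this (or an equivalent device) your branch analysis cannot be completed, and your ``normal forms'' are never exhibited.

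The second half of the argument is also absent. Even granting one elliptic cross-section or one linear tangent field, you still must globalize. The paper does this by noting that $X\setminus\Sigma$ is open and dense (zero set of the $2\times2$ minors of the matrix of the $V^R_{ij}$), so the planes through $p_0$ meeting $X\setminus\Sigma$ are dense among all planes through $p_0$; by continuity \emph{every} central section through $p_0$ is an ellipse, which pins down $K$ in terms of $p_0$ and the ``horizontal'' section $K\cap H$. It then shows $K\cap H$ is an ellipse via the linear field $L=\pr_H\circ R_{f_3}|_H$ --- with a separate sub-case when $L=0$, where one locates a second degenerate point $q\in\pd K\cap H$ and reruns the argument at $q$. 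Your revolution-body branch replaces all of this with ``propagating the tangency by the revolution symmetry produces a quadratic \dots vector field tangent to a meridian cross-section \dots vanishing at a point other than the origin'', which is again an expectation rather than a construction (and faces the same restriction-to-a-plane obstruction). As it stands the proposal is a plausible outline sharing the paper's philosophy, but the two constructions that make the proof work are not in it.
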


\begin{proof}
Recall that (since $n=3$) the degeneracy of $R$ at $p$
means that $\dim T^R_p\le 1$.
Consider the degenerate set
$$
 \Sigma = \{p\in X : \dim T^R_p \le 1 \}.
$$
Clearly $\Sigma$ is a cone: if $p\in\Sigma$ and $t\in\R$ 
then $tp\in\Sigma$ as well.

\begin{lemma}\label{l:X-Sigma is dense}
$X\setminus\Sigma$ is an open dense set in $X$.
\end{lemma}

\begin{proof}
Similarly to Lemma \ref{l:3 collinear fields}, introduce coordinates
in $X$ and consider a $3\times 3$ matrix $\mathbf V$
whose entries are polynomials representing
vector fields $V^R_{ij}$ from Lemma \ref{l:VR}.
A point $x\in X$ belongs to $\Sigma$ if and only if all $2\times 2$ minors
of $\mathbf V$ vanish at~$x$.
Thus $\Sigma$ is a null set of a system of polynomial equations.
Since we are assuming that $\Sigma\ne X$, it follows that
$X\setminus\Sigma$ is a dense open set.
\end{proof}

\begin{lemma} \label{l:partly degenerate one ellipse}
Let $Z\in\Gr_2(X)$ be such that $Z\setminus\{0\}$
intersects both $\Sigma$ and $X\setminus\Sigma$.
Then $K\cap Z$ is an ellipse.
\end{lemma}

\begin{proof}
Choose a basis $e_1,e_2,e_3\in X$ such that $e_1,e_2\in Z$,
$e_1\in\Sigma$ and $e_2\notin\Sigma$.
Let $f_1,f_2,f_3\in X^*$ be the corresponding dual basis.
Define a linear map
$$
 F\co Z\to X, \quad F(x) = R_{f_3}(x) \text{ for } x\in Z,
$$
and a quadratic homogeneous map
$$
 G\co Z\to X, \quad G(x) = R_{x_2f_1-x_1f_2}(x) = x_2 R_{f_1}(x)-x_1 R_{f_2}(x)
 \text{ for } x=x_1e_1+x_2e_2\in Z .
$$
Then construct a linear combination $W(x)$ of $F(x)$ and~$G(x)$
killing the third coordinate:
$$
 W(x) = f_3(G(x))\cdot F(x) - f_3(F(x))\cdot G(x).
$$
Since $W(x)\in Z$ for all $x\in Z$, we may regard $W$ as
a vector field on~$Z$.
Note that $W$ is homogeneous polynomial of degree~3.

Observe that for every $x=x_1e_1+x_2e_2\in Z$ the covectors
$f_3$ and $x_2f_1-x_1f_2$ from the definitions of $F(x)$ and $G(x)$
form a basis of $x^\perp\subset X^*$.
Therefore $\linspan\{F(x),G(x)\}=T^R_x$
(see \eqref{e:TR}).
In particular both $F(x)$ and $G(x)$ are tangent to $\pd K$
if $x\in\pd K\cap Z$.
This and Lemma \ref{l:tangent linear} imply that
$W(x)$ is tangent to $\pd K$ and hence to $\pd(K\cap Z)$.

Since $e_1 \in \Sigma$, we have $\dim T^R_{e_1}\le 1$,
therefore $F(e_1)$ and $G(e_1)$ are collinear, hence $W(e_1)=0$.
Since 
$e_2 \notin \Sigma$, we have $\dim T^R_{e_2}=2$, 
so $F(e_2)$ and $G(e_2)$ are linearly independent.
This implies that $W(e_2)\ne 0$.
Indeed, if $W(e_2)=0$ then $F(e_2),G(e_2) \in \ker f_3 = Z$,
hence $F(e_2)$ and $G(e_2)$ are tangent to $\pd(K\cap Z)$
and therefore linearly dependent, a contradiction.
Thus $W$ is a nonzero cubic vector field on $Z\simeq\R^2$,
$W$ is tangent to $\pd(K\cap Z)$ and it vanishes at~$e_1$.
By Lemma \ref{l:cubic vector field in the plane},
these properties imply that $K\cap Z$ is an ellipse.
\end{proof}

Fix $p_0\in\pd K\cap\Sigma$, such a point exists since $\Sigma$ is a cone
and $\Sigma\ne\{0\}$.
Consider the set (homeomorphic to the circle)
of all planes from $\Gr_2(X)$ that contain~$p_0$.
Lemma \ref{l:X-Sigma is dense} imply that this set contains
a dense subset of planes intersecting $X\setminus\Sigma$.
By Lemma \ref{l:partly degenerate one ellipse}, the intersection
of every such a plane with $K$ is an ellipse.
By continuity of intersection it follows that \textit{all}
cross-sections of $K$ by planes containing 0 and $p_0$
are ellipses.

Consider the tangent lines to these elliptic cross-sections at $p_0$.
These lines are tangent to $\pd K$ at $p_0$,
therefore (cf.\ Lemma \ref{l:tangent linear})
they form a tangent plane to $\pd K$ at~$p_0$.
Let $H$ be the plane through~0 parallel to this tangent plane.
Choose a basis $e_1,e_2,e_3$ in $X$ such that $e_3=p_0$
and $e_1,e_2\in H$, and let $f_1,f_2,f_3\in X^*$ be the dual basis.
We equip $X$ with the Euclidean structure such that $e_1,e_2,e_3$
is an orthonormal basis.
We call vectors from $H$ \textit{horizontal}
and vectors collinear to $p_0$ \textit{vertical}.
A plane $Z\in\Gr_2(X)$ is called \textit{vertical}
if it contains $p_0$.

For every vertical plane $Z$ the cross-section $\pd K\cap Z$
is an ellipse containing $p_0$ and having a horizontal tangent line at~$p_0$.
Such an ellipse has axes on the vertical and horizontal lines in $Z$
and it is uniquely determined by its horizontal axis $Z\cap H\cap K$.
Thus $K$ is uniquely determined by the point $p_0$ and the 
horizontal cross-section $K\cap H$.
We are going to show that $K\cap H$ is an ellipse,
this will imply that $K$ is an ellipsoid.

Observe that for every vertical plane $Z$, the ellipse $\pd K\cap Z$
has a vertical tangent line at the point of intersection with~$H$.
Thus the vertical vector $e_3=p_0$ is tangent to $\pd K$
at every point from $\pd K\cap H$.
Consider a linear vector field $L$ on $H$ defined by
$$
 L(x) = \pr_H(R_{f_3}(x)), \qquad x\in H
$$
where $\pr_H$ is the coordinate projection to~$H$.
For every $x\in\pd K\cap H$, both $R_{f_3}(x)$ and $e_3$ are
tangent to $\pd K$ at~$x$, hence
$L(x)$ is also tangent to $\pd K$ at~$x$ by Lemma \ref{l:tangent linear}.
Thus $L$ is a linear vector field on $H$ tangent to $\pd(K\cap H)$.
If $L\ne 0$ then Lemma \ref{l:cubic vector field in the plane}
applied to $K\cap H$ implies that $K\cap H$ is an ellipse.

If $L=0$, consider the third coordinate $f_3(R_{f_3}(x))$ 
as a function of $x\in H$.
Since this function is linear, it vanishes at some point
$q\in\pd K\cap H$. For this point we have $L(q)=\pr_H(R_{f_3}(q))=0$
and  $f_3(R_{f_3}(q))=0$, hence $R_{f_3}(q)=0$.
Since $f_3\in q^\perp$, this implies that $R$ is degenerate at $q$.
Now we can repeat the above arguments for $q$ in place of $p_0$
and conclude that all cross-sections of $K$ by planes
containing 0 and $q$ are ellipses.
In particular this applies to the plane~$H$.

Thus we have shown that $K\cap H$ is an ellipse.
Since $K$ is uniquely determined by $K\cap H$ and $p_0$ as explained above,
$K$ coincides with the ellipsoid having a vertical axis
and a given elliptic horizontal cross-section.
This finishes the proof of Proposition~\ref{p:partly degenerate}.
\end{proof}

Now we combine Proposition \ref{p:fully degenerate}
and Proposition \ref{p:partly degenerate}
to obtain  the main result of this section:

\begin{prop}\label{p:all degenerate cases}
Proposition~\ref{prop:Rtangent} holds true if $R$ is degenerate
in the sense of Definition \ref{d:degenerate point}.
\end{prop}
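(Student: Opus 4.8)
The plan is to dispatch the statement by a dichotomy on the degeneracy locus, feeding into the two propositions already proved in this section. Recall that, by Definition \ref{d:degenerate point}, ``$R$ is degenerate'' means precisely that there exists some $p_0 \in X \setminus \{0\}$ at which $\dim T^R_{p_0} < n - 1$, which for $n = 3$ reads $\dim T^R_{p_0} \le 1$.

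First I would treat the case in which $R$ is degenerate at \emph{every} point $p \in X \setminus \{0\}$. Here Proposition \ref{p:fully degenerate} applies verbatim and already yields the conclusion of Proposition \ref{prop:Rtangent}: for every $\la \in X^*$ the operator $R_\la$, regarded as a linear vector field on $X$, is tangent to $\pd K$.

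Otherwise $R$ is degenerate at $p_0$ but there exists $p_1 \in X \setminus \{0\}$ at which $R$ is non-degenerate, i.e.\ $\dim T^R_{p_1} = 2$. This is exactly the hypothesis of Proposition \ref{p:partly degenerate}, which gives that $K$ is an ellipsoid. It then remains only to invoke Lemma \ref{l:Rtangent for ellipsoid}, which asserts that the conclusion of Proposition \ref{prop:Rtangent} holds whenever $K$ is an ellipsoid. Since these two cases are exhaustive, the proof is complete.

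I do not anticipate any genuine obstacle here: all the substance lies in Propositions \ref{p:fully degenerate} and \ref{p:partly degenerate}, and the present statement merely packages them together, routing the ``$K$ is an ellipsoid'' output of the partly-degenerate branch through Lemma \ref{l:Rtangent for ellipsoid} so that both branches land on the same conclusion.
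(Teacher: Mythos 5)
Your proposal is correct and coincides with the paper's own proof: both split into the everywhere-degenerate case (handled by Proposition~\ref{p:fully degenerate}) and the partly-degenerate case (handled by Proposition~\ref{p:partly degenerate} followed by Lemma~\ref{l:Rtangent for ellipsoid}). Nothing is missing.
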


\begin{proof}
If $R$ is degenerate at all points then the result follows
from Proposition \ref{p:fully degenerate},
otherwise it follows from Proposition \ref{p:partly degenerate}
and Lemma \ref{l:Rtangent for ellipsoid}.
\end{proof}

\section{Non-degenerate case}\label{sec:non-degenerate}

In this section we prove Proposition \ref{prop:Rtangent}
in the case when $R$ is non-degenerate in the sense
of Definition \ref{d:degenerate point}, that is,
\begin{equation}\label{e:nondegeneracy assumption}
\dim T_p^R = n-1= 2 \quad\text{for all $p\in X\setminus\{0\}$},
\end{equation}
where $T_p^R$ is defined by \eqref{e:TR}.
We fix $X$, $K$ and $R$ from Proposition \ref{prop:Rtangent}
and assume \eqref{e:nondegeneracy assumption} for the rest of this section.

Let $\Psi$ be the Minkowski norm on $X$ associated to $K$.
By Lemma \ref{l:tangent cone} and the homogeneity of $\Psi$ and~$R$,
the assumption \eqref{e:tangency assumption} of Proposition \ref{prop:Rtangent} implies that
\begin{equation}\label{e:tangency assumption in terms of Psi}
 \pd^+_x \Psi(\pm R_\la(x))=0 
 \quad\text{for all $\la\in X^*$ and $x\in\ker\la$}.
\end{equation}

\begin{lemma}\label{l:symmetric and analytic}
$K$ is symmetric with respect to~0
and $\pd K$ is a real analytic surface.
\end{lemma}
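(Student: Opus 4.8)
The plan is to extract from the non-degeneracy assumption \eqref{e:nondegeneracy assumption} that $\partial K$ is a $C^1$ hypersurface whose tangent plane at each point depends analytically on the point, and then to read off both analyticity and central symmetry from a first-order ODE for the Minkowski norm~$\Psi$ with analytic coefficients. First I would observe that \eqref{e:nondegeneracy assumption} pins down the tangent plane of $\partial K$ at each of its points. Fix $x\in\partial K$. By \eqref{e:tangency assumption in terms of Psi} every vector of $T_x^R$ is tangent to $\partial K$ at $x$, so $T_x^R$ is contained in the linear space of tangent vectors to $\partial K$ at $x$ (Lemma~\ref{l:tangent linear}); this space does not contain $x$ itself, since $\partial^+_x\Psi(x)=\Psi(x)\ne 0$, hence it is a proper subspace of $X$ and so has dimension at most $n-1=2$. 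As $\dim T_x^R=2$ by \eqref{e:nondegeneracy assumption}, the two spaces coincide. Thus $\partial K$ has a unique supporting hyperplane at every point, which is the standard criterion for the convex function $\Psi$ to be differentiable — hence $C^1$ — on $X\setminus\{0\}$; and, since $d\Psi$ is $0$-homogeneous and $R$ is linear, $\ker d_x\Psi=T_x^R$ for all $x\ne 0$. In particular $x\notin T_x^R$.

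Next I would record that the plane field $x\mapsto T_x^R$ on $X\setminus\{0\}$ is real analytic: it is the column span of the $3\times 3$ polynomial matrix $\mathbf V$ assembled from the quadratic fields $V^R_{ij}$ of Lemma~\ref{l:VR}, which by \eqref{e:nondegeneracy assumption} has rank exactly $2$ everywhere on $X\setminus\{0\}$, so its annihilator line in $X^*$ is spanned, near any given point, by a nonzero row of the adjugate of $\mathbf V$, a polynomial map into $X^*$. Choosing such an analytic local section $\omega(x)$ and normalizing it by $\omega(x)(x)=1$ (possible since $x\notin T_x^R$), the facts that $d_x\Psi$ annihilates $T_x^R$ and that $d_x\Psi(x)=\Psi(x)$ by Euler's identity yield $d_x\Psi=\Psi(x)\,\omega(x)$, that is,
\[
 d_x(\log\Psi)=\omega(x)\qquad (x\ne 0).
\]
Since $\omega$ is analytic and antiderivatives (path integrals) of analytic $1$-forms are analytic, the a priori only $C^1$ function $\log\Psi$ is real analytic; hence $\Psi$ is real analytic on $X\setminus\{0\}$, and as $d_x\Psi\ne 0$ there, $\partial K=\Psi^{-1}(1)$ is a real analytic surface.

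For central symmetry I would use the parity of $\omega$. Linearity of $R$ gives $T_{-x}^R=T_x^R$, so the annihilator line is the same at $x$ and at $-x$; comparing the normalizations at the two points forces $\omega(-x)=-\omega(x)$, equivalently $\omega$ is invariant under the map $x\mapsto -x$. Hence the $C^1$ function $h(x)=\log\Psi(x)-\log\Psi(-x)$ has vanishing differential and is therefore constant on the connected set $X\setminus\{0\}$ (this is where $\dim X=3$ enters); evaluating the resulting identity $\Psi(-x)=c\,\Psi(x)$ at $-x$ forces $c^2=1$, so $c=1$, $\Psi$ is even, and $K=-K$.

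The one genuinely delicate point is the first step — squeezing out of \eqref{e:nondegeneracy assumption} that $\partial K$ is $C^1$ with $T_x(\partial K)=T_x^R$ — after which the problem collapses to the analytic linear ODE $d\log\Psi=\omega$; the remaining ingredients (the constant-rank linear algebra producing an analytic $\omega$, and analyticity of path integrals of analytic forms) are routine.
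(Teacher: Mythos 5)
Your proof is correct and rests on the same key observation as the paper's: by \eqref{e:nondegeneracy assumption} together with \eqref{e:tangency assumption}, $T^R_x$ is precisely the tangent plane of $\pd K$ at each $x\in\pd K$, and this plane field is real analytic (column span of the polynomial matrix built from the fields $V^R_{ij}$ of Lemma~\ref{l:VR}) and invariant under $x\mapsto -x$; analyticity and symmetry of $\pd K$ are then read off from this. The paper's version is terser: it declares $\pd K$ an integral surface of an analytic distribution, hence analytic, and obtains symmetry by showing that $p\mapsto\Psi(cp)$ is constant on $\pd K$. You instead integrate the explicit equation $d\log\Psi=\omega$, where $\omega$ is the analytic $1$-form spanning the annihilator of $T^R_x$ normalized by $\omega(x)(x)=1$. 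This is slightly longer but makes explicit two points the paper leaves implicit: the a priori $C^1$ regularity of $\Psi$ (correctly extracted from $\dim T^R_x=2$ via uniqueness of the supporting hyperplane — note that two distinct elements of the subdifferential at $x\in\pd K$ are automatically linearly independent, so they would force the tangent space to have dimension at most $1$), and the reason an integral surface of an analytic distribution is analytic (a $C^1$ function with analytic differential is analytic). Your parity computation $\omega(-x)=-\omega(x)$, giving $d\bigl(\log\Psi(x)-\log\Psi(-x)\bigr)=0$ and then $c^2=1$, is a clean repackaging of the paper's argument with $F(p)=\Psi(cp)$. I see no gaps.
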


\begin{proof}
Recall that for every $p\in\pd K$
all vectors from $T_p^R$ are tangent to $\pd K$ at~$p$
(cf.\ \eqref{e:tangency assumption} and \eqref{e:TR}).
Since $\dim T_p^R=2$, this means that $T_p^R$ is the tangent plane
of $\pd K$ at~$p$.
Thus $\pd K$ is an integral surface of the two-dimensional
distribution $\{T_p^R\}_{p\in X\setminus\{0\}}$ on $X\setminus\{0\}$.
Since this distribution is generated by algebraic vector fields
(see Lemma \ref{e:VR}), it is real analytic, hence so is~$\pd K$.

To prove the symmetry of $K$, consider the intersection of $\pd K$
with an arbitrary straight line passing through~0.
It is a pair of points $p_1,p_2\in \pd K$ such that $p_2=cp_1$ where $c<0$.
Define a function $F\co X\to\R$ by $F(p) = \Psi(cp)$ for all $p\in X$.
For every $p\in\pd K$, the differential
$d_pF = c\cdot d_{cp}\Psi$ vanishes on~$T_p^R$.
Indeed, consider $p'=\Psi(cp)^{-1}cp\in\pd K$
and observe that $T_{p'}^R=T_p^R$
since the distribution $\{T_p^R\}$ is generated by homogeneous vector fields.
Since $T_{p'}^R$ is tangent to $\pd K$ at~$p'$, it follows that
$d_{p'}\Psi$ vanishes on~$T_{p'}^R=T_p^R$.
Hence, by the homogeneity of $\Psi$,
$d_{cp}\Psi$ and hence $d_pF$ vanish on~$T_p^R$.

Since $\pd K$ is connected and $d_pF$ vanishes
on its tangent plane $T_p^R$ for every $p\in\pd K$,
the restriction $F|_{\pd K}$ is constant.
Substituting $p_1$ one sees that this constant is $F(p_1)=\Psi(p_2)=1$.
On the other hand, it equals $F(p_2)=\Psi(c^2p_1)=c^2$.
Therefore $c^2=1$, hence $c=-1$ and $p_2=-p_1$.
Thus for every $p_1\in\pd K$ we have $-p_1\in\pd K$,
therefore $K$ is symmetric.
\end{proof}

We reuse some notation from \S\ref{subsec:nu}
but here the constructions are applied to~$X$ in place of~$V$.
For linearly independent vectors $u,v\in X$ define a plane
$\Pi_{u\wedge v}\in\Gr_2(X)$ by 
$$
 \Pi_{u\wedge v} = \linspan(u,v).
$$
As in \eqref{e:affine area},
define a positively 1-homogeneous area function
$A\co \Lambda^2X \to \R_+$ for $K$ by $A(0) = 0$ and 
\begin{equation}\label{e:area 2D}
A(u \wedge v) = \vol_{u \wedge v}(K\cap \Pi_{u\wedge v})^{-1} \text{ for linearly independent } u, v \in X
\end{equation}
where $\vol_{u \wedge v}$ is the Haar measure on $\Pi_{u\wedge v}$
normalized in such a way that the parallelogram with edges $u,v$ has measure~$1$.

Let $IK \subset \Lambda^2 X$ be the intersection body of $K$, that is, 
\begin{equation}\label{e:intersection body}
  IK = \left\{\sigma \in \Lambda^2 X \mid A(\sigma) \le 1 \right\} .
\end{equation}
We place the intersection body in $\Lambda^2X$ rather than in $X$ or $X^*$
(cf.~\cites{Bus49,Thompson}) to ensure its independence of additional structures
(such as an inner product) on~$X$.
By a classical result of Busemann \cite{Bus49},
the symmetry of $K$ (see Lemma \ref{l:symmetric and analytic})
implies that $IK$ is convex.
Note that $A$ from \eqref{e:area 2D} is the Minkowski norm
associated to~$IK$.

The second claim of Lemma \ref{l:symmetric and analytic} implies
that $\pd IK$ is a real analytic surface in~$\Lambda^2 X$.
Note that the analyticity implies that $IK$ is strictly convex in the sense that
its boundary does not contain straight line segments.
Indeed, if $\pd IK$ contains a segment of a straight line then by analyticity
it contains the entire line, contrary to the fact that it is the boundary of a compact set.
See also \cite{Bus49}*{IIa} for general comments about strict convexity
of intersection bodies.

The next lemma is the key construction of the proof.

\begin{lemma}\label{l:flow of planes}
There exist a quadratic homogeneous vector field $W$ on $\Lambda^2 X$ such that
\begin{enumerate}
\item $W$ is tangent to $\pd IK$.
\item For every trajectory $\ga$ of $W$ in $\pd IK$,
all cross-sections $K\cap\Pi_{\ga(t)}$, $t\in\R$, are linearly equivalent.
\item If $\sigma\in\pd IK$ is such that $W(\sigma)=0$ then
$K\cap\Pi_{\sigma}$ is an ellipse.
\end{enumerate}
\end{lemma}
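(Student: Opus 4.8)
The plan is to construct $W$ as the vector field on $\Lambda^2X$ that moves a plane according to the infinitesimal generator $R$ evaluated at that plane. Fix a nonzero volume form $\omega\in\Lambda^3X^*$ and, for $\sigma\in\Lambda^2X$, let $\la_\sigma\in X^*$ be given by $\la_\sigma(x)=\omega(x\wedge\sigma)$; then $\sigma\mapsto\la_\sigma$ is linear and $\ker\la_\sigma=\Pi_\sigma$ whenever $\sigma\ne 0$. For $A\in\Hom(X,X)$ write $A^\wedge\in\Hom(\Lambda^2X,\Lambda^2X)$ for the canonical extension, $A^\wedge(u\wedge v)=Au\wedge v+u\wedge Av$. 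I would set
$$
 W(\sigma)=\bigl(R_{\la_\sigma}\bigr)^{\wedge}(\sigma).
$$
Since $\sigma\mapsto\la_\sigma$ is linear, so are $\sigma\mapsto R_{\la_\sigma}$ and $\sigma\mapsto(R_{\la_\sigma})^{\wedge}$; applying this last map to $\sigma$ produces a quadratic homogeneous map $\Lambda^2X\to\Lambda^2X$. (Rescaling $\omega$ rescales $W$, which affects none of the assertions of the lemma.)

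The heart of the matter is a ``lifted'' description of trajectories. Given a trajectory, choose a point $\sigma_0\ne0$ on it, write $\sigma_0=u_0\wedge v_0$, and solve the coupled system $\dot u=R_{\la_{u\wedge v}}(u)$, $\dot v=R_{\la_{u\wedge v}}(v)$ with $u(0)=u_0,v(0)=v_0$; the right-hand side is polynomial, hence smooth, so a solution exists, and $\sigma(t):=u(t)\wedge v(t)$ satisfies $\dot\sigma=W(\sigma)$, so by uniqueness it is the given trajectory. Let $L_t\co\R^2\to\Pi_{\sigma(t)}$ be the isomorphism $(s_1,s_2)\mapsto s_1u(t)+s_2v(t)$ and put $\Psi_t=\Psi\circ L_t$, a Minkowski norm on $\R^2$ with unit ball $L_t^{-1}(K\cap\Pi_{\sigma(t)})$. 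I claim $\Psi_t$ is independent of $t$. Indeed $\dot L_t(s)=R_{\la_{\sigma(t)}}(L_t(s))$ and $L_t(s)\in\Pi_{\sigma(t)}=\ker\la_{\sigma(t)}$, so the tangency hypothesis in the form \eqref{e:tangency assumption in terms of Psi} gives $\pd^+_{L_t(s)}\Psi\bigl(\pm R_{\la_{\sigma(t)}}(L_t(s))\bigr)=0$; since $t\mapsto L_t(s)$ is smooth and $\Psi$ is convex (hence locally Lipschitz), both one-sided $t$-derivatives of $t\mapsto\Psi_t(s)$ vanish everywhere, and a locally Lipschitz function with this property is constant. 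This gives parts (1) and (2) quickly. For (2): $K\cap\Pi_{\sigma(t)}=L_t(B_0)$ with $B_0=\{\Psi_0\le1\}$ a fixed planar body, so all cross-sections along the trajectory are linearly equivalent to $B_0$. For (1): the area function $A$ of \eqref{e:area 2D} satisfies $A(\sigma(t))=\vol_{\sigma(t)}(K\cap\Pi_{\sigma(t)})^{-1}=(\mathrm{Leb}\,B_0)^{-1}$, independent of $t$; running this for every nonzero starting $2$-vector shows $A$ is constant along every trajectory of $W$, and since (as noted before the lemma) $A$ is real-analytic away from $0$ with $\pd IK$ a regular level set, $W(\sigma)\in T_\sigma\pd IK$ for all $\sigma\in\pd IK$.

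For part (3), suppose $\sigma\in\pd IK$ with $W(\sigma)=0$; then $\sigma\ne0$ and $\la_\sigma\ne0$. Writing $\sigma=u\wedge v$ and expanding $(R_{\la_\sigma})^{\wedge}(\sigma)=0$ in a basis obtained by completing $u,v$ with a third vector, one reads off that $R_{\la_\sigma}$ must map $\Pi_\sigma$ into itself and have trace zero on $\Pi_\sigma$. Thus $R_{\la_\sigma}|_{\Pi_\sigma}$ is a linear vector field on $\Pi_\sigma$, and by \eqref{e:tangency assumption} together with $R_{\la_\sigma}(\Pi_\sigma)\subseteq\Pi_\sigma$ it is tangent to $\pd(K\cap\Pi_\sigma)$. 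It cannot be the zero field: if $R_{\la_\sigma}(x)=0$ for all $x\in\Pi_\sigma=\ker\la_\sigma$, then for any nonzero $p\in\Pi_\sigma$ the covector $\la_\sigma$ lies in $p^\bot$ and in the kernel of $S^R_p$, forcing $\dim T^R_p\le1$ and contradicting the non-degeneracy \eqref{e:nondegeneracy assumption}. Hence $R_{\la_\sigma}|_{\Pi_\sigma}$ is a nonzero linear vector field tangent to $\pd(K\cap\Pi_\sigma)$, and Lemma~\ref{l:cubic vector field in the plane} (the degree-one case) yields that $K\cap\Pi_\sigma$ is an ellipse.

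The main obstacle is the invariance claim $\Psi_t\equiv\Psi_0$ of the second paragraph, or rather the construction behind it: choosing the lift $(u(t),v(t))$ so that the ambient motion of the plane $\Pi_{\sigma(t)}$ is exactly compensated on the cross-section, which is precisely what turns the tangency hypothesis \eqref{e:tangency assumption} into the vanishing of the $t$-derivative of the section norm. Once this is in place, assertion (1) and (2) are immediate, and assertion (3) is a short algebraic computation plus one application of Lemma~\ref{l:cubic vector field in the plane} and of the non-degeneracy assumption to exclude the trivial subcase.
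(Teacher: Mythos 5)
Your construction of $W$ and your proofs of parts (1)--(2) coincide with the paper's: the same identification $\sigma\mapsto\la_\sigma$ (the paper's $\varphi$), the same field $W(\sigma)=R_{\la_\sigma}(u)\wedge v+u\wedge R_{\la_\sigma}(v)$, the same lifted system $\dot u=R_{\la_{u\wedge v}}(u)$, $\dot v=R_{\la_{u\wedge v}}(v)$, and the same invariance of $\Psi\circ L_t$ derived from \eqref{e:tangency assumption in terms of Psi}. Two of your sub-arguments are genuinely different, and both are correct. First, you establish well-definedness and quadratic homogeneity of $W$ by noting that $\sigma\mapsto(R_{\la_\sigma})^{\wedge}$ is a linear map into $\Hom(\Lambda^2X,\Lambda^2X)$ and then evaluating at $\sigma$; the paper instead checks independence of the representation $\sigma=u\wedge v$ by hand and proves quadraticity via a coordinate divisibility argument. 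Your route is cleaner. Second, for part (3) the paper argues dynamically: the constant trajectory produces a one-parameter family $\{L_t\circ L_0^{-1}\}$ of self-equivalences of $K\cap\Pi_\sigma$, non-constant by \eqref{e:nondegeneracy assumption}, so the symmetry group is non-discrete and the section is an ellipse. You instead read off from $(R_{\la_\sigma})^{\wedge}(\sigma)=0$ that $R_{\la_\sigma}$ preserves $\Pi_\sigma$ (and is traceless there) --- a computation that checks out --- note that its restriction is tangent to $\pd(K\cap\Pi_\sigma)$ by \eqref{e:tangency assumption}, exclude the zero restriction via non-degeneracy, and invoke Case~1 of Lemma~\ref{l:cubic vector field in the plane}. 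The two arguments are morally equivalent (that Case~1 is itself the non-discrete-symmetry-group argument), but yours is more directly algebraic. One small point you should make explicit: for (2) the lift $(u(t),v(t))$ must exist on the whole domain of the given trajectory, not just on some subinterval $J$; as the paper notes, this follows because $\Psi\circ L_t\equiv\Psi\circ L_0$ confines $u(t)$ and $v(t)$ to a compact set and thus rules out finite-time escape.
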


\begin{proof}
Fix a linear isomorphism
$$
\varphi\co \Lambda^2 X \to X^*
$$
such that $u,v\in\ker\varphi(u\wedge v)$ for all $u,v\in X$.
Such $\varphi$ can be constructed, for instance, by choosing a
volume form $\omega$ on $X$ and setting $\varphi(u\wedge v)=\omega(u,v,\cdot)$.

We define the desired vector field $W\co \Lambda^2 X\to\Lambda^2 X$ by
\begin{equation}\label{e:plane flow def}
 W(\sigma)  = R_{\varphi(\sigma)}(u) \wedge v + u \wedge R_{\varphi(\sigma)}(v)
 \quad \text{if $\sigma = u\wedge v$ where $u,v\in X$}.
\end{equation}
The definition does not depend on the choice of $u$ and~$v$ representing $\sigma$.
Indeed, if $\sigma\ne 0$, any other pair of vectors representing $\sigma$
can be obtained from the original one by a series of elementary
transformations of the form $(u,v)\mapsto (v,-u)$,
$(u,v)\mapsto (cu,c^{-1}v)$ and $(u,v)\mapsto (u,v+cu)$
where $c\in\R\setminus\{0\}$.
Applying any of these transformations to $u$ and $v$ in \eqref{e:plane flow def}
and using the linearity of $R_{\varphi(\sigma)}$ and the bi-linearity
and skew-symmetry of $\wedge$, one sees that the result does not change.

First we check that $W$ is indeed a quadratic vector field.
Choose a basis $e_1,e_2,e_3$ in~$X$ and introduce coordinates on $\Lambda^2 X$
by means of the basis $e_2\wedge e_3$, $e_3\wedge e_1$, $e_1\wedge e_2$.
Consider $\sigma\in\Lambda^2 X$ and let $x,y,z$ be the coordinates of $\sigma$:
$$
 \sigma = x e_2\wedge e_3 + y e_3\wedge e_1 + z e_1\wedge e_2 .
$$
Assuming $z\ne 0$, we have $\sigma=\frac1z u\wedge v$
where $u=ze_1-xe_3$ and $v=ze_2-ye_3$.
Then, by \eqref{e:plane flow def},
$$
 W(\sigma) = \frac1z\, R_{\varphi(\sigma)}(u) \wedge v 
 + \frac1z\, u \wedge R_{\varphi(\sigma)}(v) 
$$
or, equivalently
\begin{equation}\label{e:W(x,y,z)}
 z\cdot W(\sigma) = R_{\varphi(\sigma)}(u) \wedge v 
 + u \wedge R_{\varphi(\sigma)}(v) .
\end{equation}
Substituting the coordinate expressions for $\sigma$, $u$ and $v$
we regard both sides of \eqref{e:W(x,y,z)} as functions
of real variables $x,y,z$.
The identity \eqref{e:W(x,y,z)} holds for for all $x,y,z$ such that $z\ne 0$
and hence, by continuity, for all $x,y,z\in\R$.
Since $\sigma$, $u$ and $v$ are linear functions of $(x,y,z)$,
the right-hand side of \eqref{e:W(x,y,z)}
is a homogeneous polynomial of degree~3 (with values in $\Lambda^2 X$).
The left-hand side shows that this polynomial vanishes at $z=0$,
therefore it is divisible by $z$ as a polynomial.
Thus $W$ is a quadratic vector field on $\Lambda^2 X$.

Now we verify the properties (1)--(3).
Fix $\sigma\in \Lambda^2 X\setminus\{0\}$ and choose $u_0,v_0\in X$
such that $\sigma=u_0\wedge v_0$.
Let $u=u(t)$ and $v=v(t)$ be the solutions of the o.d.e.\ system
\begin{equation}\label{e:uv ode}
\begin{cases}
 \dot u = R_{\varphi(u\wedge v)}(u) \\
 \dot v = R_{\varphi(u\wedge v)}(v) 
\end{cases}
\end{equation}
with initial values $u(0)=u_0$ and $v(0)=v_0$,
and let $J \ni 0$ be the largest interval where the solution
is defined. (We will show later that $J = \R$).
Let $\ga(t)=u(t)\wedge v(t)$, then
$$
 \dot\ga(t) = \dot u(t)\wedge v(t) + u(t) \wedge \dot v(t) = W(\ga(t))
$$
by \eqref{e:uv ode} and \eqref{e:plane flow def}.
Thus $\ga$ is a trajectory of~$W$ with $\ga(0)=\sigma$,
in particular $\ga(t)\ne 0$ for all~$t \in J$.

For each $t \in J$, let $L_t\co\R^2\to X$ be the linear map such that
$L_t(e_1)=u(t)$ and $L_t(e_2)=v(t)$
where $e_1$ and $e_2$ is the standard basis of~$\R^2$.
Fix $w=c_1e_1+c_2e_2\in\R^2$ and consider a curve $\alpha=\alpha(t)$ in~$X$
defined by 
$$
\alpha(t)=L_t(w)=c_1u(t)+c_2v(t) .
$$
By \eqref{e:uv ode} and the linearity of $R_{\ga(t)}$,
the velocity of $\alpha$ is given by
\begin{equation}\label{e:dot alpha}
 \dot\alpha(t) = R_{\varphi(\ga(t))}(\alpha(t)) .
\end{equation}
Recall that $\varphi(\ga(t))=\varphi(u(t)\wedge v(t))$ is a co-vector
whose kernel contains $u(t)$ and $v(t)$, hence it also contains
their linear combination $\alpha(t)$.
This fact, \eqref{e:dot alpha} and \eqref{e:tangency assumption in terms of Psi}
imply that $\frac d{dt}\Psi(\alpha(t))=0$,
hence the function $t\mapsto\Psi(\alpha(t))$ is constant.
Since $\alpha(t)=L_t(w)$ where $w$ is an arbitrary vector from $\R^2$,
we conclude that $\Psi\circ L_t$ is independent of~$t$:
\begin{equation}\label{e:Psi Lt constant}
 \Psi\circ L_t = \Psi\circ L_0 \quad\text{for all } t\in J.
\end{equation}
Note that \eqref{e:Psi Lt constant} implies that $u(t)$ and $v(t)$
are confined in a compact subset of~$X$.
Therefore $u(t)$ and $v(t)$ are defined by \eqref{e:uv ode} for all $t\in\R$,
thus $J=\R$.

The relation \eqref{e:Psi Lt constant} implies that the pre-image
$L_t^{-1}(K)\subset\R^2$ is independent of~$t$.
Observe that
$$
 \Pi_{\ga(t)}=\linspan\{u(t),v(t)\}=L_t(\R^2) ,
$$
hence $K\cap\Pi_{\ga(t)}$ is linearly equivalent to
$L_t^{-1}(K)=L_0^{-1}(K)$ and (2) follows.

To prove (1), observe that for $A$ defined by \eqref{e:area 2D}
we have $A(\ga(t)) = |L_t^{-1}(K)|^{-1}$
where $|\cdot|$ is the standard Euclidean area in~$\R^2$.
Since $L_t^{-1}(K)$ is independent of~$t$, so is $A(\ga(t))$.
Since $\ga$ is an arbitrary trajectory of~$W$, it follows that
$W$ is tangent to the level sets of $A$ and
in particular to~$\pd IK$.

It remains to prove (3). Assume that $W(\sigma)=0$,
then $\ga(t)=\sigma$ and hence $\Pi_{\ga(t)}=\Pi_\sigma$
for all~$t$.
Now we may regard $L_t$ as a (bijective) linear map
from $\R^2$ to the fixed plane $\Pi_\sigma$, 
and then \eqref{e:Psi Lt constant} implies
that $\{L_t\circ L_0^{-1}\}_{t\in\R}$ is a one-parameter family
of linear self-equivalences of $K\cap\Pi_\sigma$.
If this family is not constant then the group
$\I(K\cap\Pi_\sigma)$ of self-equivalences is not discrete
and hence $K\cap\Pi_\sigma$ is an ellipse, as claimed.
Otherwise $\{L_t\}$ and hence $u(t)$ are constants.
This and \eqref{e:uv ode} imply that $R_{\varphi(\sigma)}(u_0)=0$,
so the map $\la\mapsto R_\la(u_0)$ is not injective on $u_0^\perp$.
This contradicts our standing assumption \eqref{e:nondegeneracy assumption}
for $p=u_0$.
This proves (3) and finishes the proof of Lemma \ref{l:flow of planes}.
\end{proof}

Now we finish the proof of Proposition \ref{prop:Rtangent}.

\subsection*{Proof of Proposition \ref{prop:Rtangent}.}
Let $X,K,R$ be as in Proposition \ref{prop:Rtangent}. The case when
$R$ is degenerate is covered by Proposition~\ref{p:all degenerate cases},
so we assume the non-degeneracy condition \eqref{e:nondegeneracy assumption}.
Due to Lemma~\ref{l:Rtangent for ellipsoid} it suffices to show
that $K$ is an ellipsoid.

By Lemma \ref{l:symmetric and analytic}, $K$ is symmetric
and $\pd K$ is a real analytic surface.
Let $IK\subset\Lambda^2X$ be the intersection body of $K$,
see \eqref{e:intersection body},
$W$ a quadratic vector field on $\Lambda^2X$ constructed
in Lemma~\ref{l:flow of planes},
and $S=\pd IK$.
Define
$$
 \mathcal E = \{ \sigma\in S :
 K \cap \Pi_\sigma \text{ is an ellipse} \}
$$
(recall that $\Pi_\sigma$ is the plane associated to a bivector $\sigma$).
Clearly $\mathcal E$ is a closed set.
Lemma~\ref{l:flow of planes}(3) implies that $\mathcal E$ 
contains all zeroes of $W$ on~$S$.
If $W=0$ then $\mathcal E=S$, which means that all central cross-sections
of $K$ are ellipses and hence $K$ is an ellipsoid.
We assume that $W\ne 0$ and proceed in several steps.

\smallskip
\textit{Step 1}:
We show that $\mathcal E$ contains a non-constant orbit
$\{\ga(t)\}$ of~$W$.
As explained before Lemma~\ref{l:flow of planes},
$S$ is a smooth strictly convex surface.
Therefore we can apply Lemma \ref{l:fixedpts} to~$S$
and obtain 
a non-constant orbit $\{\ga(t)\}\subset S$ of $W$
whose closure contains a zero of~$W$.
Lemma \ref{l:flow of planes}(2) implies that
all cross-sections $K\cap \Pi_{\ga(t)}$ are linearly equivalent.
By continuity they are also linearly equivalent to $K\cap\Pi_{\sigma}$,
which is an ellipse by Lemma~\ref{l:flow of planes}(3).
Hence $\ga(t)\in\mathcal E$ for all $t\in\R$.

\smallskip
\textit{Step 2}:
We show that $\mathcal E$ contains three linearly independent elements.
Suppose to the contrary that $\mathcal E$ is contained in some plane
$H\in\Gr_2(\Lambda^2X)$ and hence in the topological circle $S \cap H$.
In particular the orbit from Step~1 is contained in $H$,
therefore $W(\ga(t))\in H$ for all~$t$.
Since $W$ is a quadratic homogeneous polynomial,
it follows that $W(q)\in H$ for all $q\in H$.
Hence every trajectory of $W$ starting at a point on the circle
$S \cap H$ stays on this circle forever and therefore converges
to a fixed point of~$W$.
As explained above, this implies that the entire trajectory
is contained in~$\mathcal E$.
Thus all points of $S \cap H$ belong to $\mathcal E$,
so in fact $\mathcal E=S \cap H$.

Pick $q\in S\setminus H$, consider the orbit of $W$ starting at~$q$,
and apply the Poincar\'e-Bendixson Theorem similarly to the proof
of Lemma \ref{l:closed orbit}.
If the closure of the orbit contains a fixed point
then we have $q\in\mathcal E$ similarly to Step~1.
Otherwise the closure contains a closed orbit of $W$,
this closed orbit bounds a disc $D\subset S\setminus H$,
which contains a fixed point  by Brouwer's fixed-point theorem,
and this fixed point belongs to~$\mathcal E$
by Lemma~\ref{l:flow of planes}(3).
In both cases we have found an element of $\mathcal E$ outside~$H$.
This contradicts our assumptions and thus proves the claim of Step~2.

\smallskip
\textit{Step 3}:
We show that there exists a quadratic form $Q$ on $K$ such that
for every $\sigma\in\mathcal E$ the cross-section $K\cap\Pi_\sigma$
is compatible with $Q$ in the sense that $Q=1$ on $\pd K\cap\Pi_\sigma$.

Fix linearly independent $\eta_1,\eta_2,\eta_3\in\mathcal E$
provided by Step~2.
The linear independence 
implies that $\Pi_{\eta_1}\cap\Pi_{\eta_2}\cap\Pi_{\eta_3}=\{0\}$.
There exists a quadratic form $Q$ on $X$ compatible with the
three cross-sections $K\cap\Pi_{\eta_i}$ in the above sense.
Indeed, let $e_1,e_2,e_3\in\pd K$ be a basis of $X$ such that
the planes $\Pi_{\eta_i}$, $i=1,2,3$, are the coordinate planes
with respect to this basis: $e_i\in\Pi_{\eta_{i+1}}\cap \Pi_{\eta_{i+2}}$
where the indices are taken modulo~3.
The matrix of the desired quadratic form $Q$ in this basis
can be obtained as follows: the diagonal entries are all equal to~1
and then each off-diagonal entry is uniquely determined by the
compatibility condition on the respective coordinate plane.

Let $\sigma\in\mathcal E$ and let $Q_\sigma$ be the quadratic form
on the plane $\Pi_\sigma$ corresponding to the ellipse $K\cap\Pi_\sigma$.
Our goal is to show that $Q_\sigma = Q|_{\Pi_\sigma}$.
In the case when $\Pi_\sigma$ does not contain any basis vector,
this follows from the fact that a quadratic form on the plane
is uniquely determined by its values on the three lines
$\Pi_\sigma\cap\Pi_{\eta_i}$.
If $\Pi_\sigma$ contains a basis vector, say $e_1\in\Pi_\sigma$,
then $Q_\sigma$ is uniquely determined by its values on the two lines,
$(e_1)$ and $\Pi_\sigma\cap\Pi_{\eta_1}$,
and the derivative of $Q_\sigma$ at~$e_1$.
The derivative of $Q_\sigma$ at $e_1$ is uniquely determined from
the fact that the tangent line of the ellipse $\pd K\cap\Pi_\sigma$
at $e_1$ belongs to the plane spanned by the tangent lines
to $\pd K\cap\Pi_{\eta_2}$ and $\pd K\cap\Pi_{\eta_3}$ at $e_1$.
In both cases $Q_\sigma = Q|_{\Pi_\sigma}$ 
and therefore $K\cap\Pi_\sigma$ is compatible with $Q$.

\smallskip
\textit{Step 4}:
We show that $\Psi^2=Q$ where $Q$ is the quadratic form from Step~3
(recall that $\Psi$ is the Minkowski norm associated to $K$).
By Step~1 the set $\mathcal E$ contains a non-constant smooth curve $\{\ga(t)\}$,
and by Step~3 the equality $\Psi^2=Q$ holds on each of the planes $\Pi_{\ga(t)}$.
The union of these planes has a nonempty interior,
thus $\Psi^2=Q$ on a nonempty open subset of~$X$.
Since $\pd K$ is a real analytic surface, $\Psi$ is real analytic
on $X\setminus\{0\}$ and we conclude that $\Psi^2=Q$ everywhere.

\smallskip
The identity $\Psi^2=Q$ implies that $Q$ is positive definite and $K$ is an ellipsoid.
An application of Lemma~\ref{l:Rtangent for ellipsoid} finishes
the proof of Proposition \ref{prop:Rtangent}.
\qed

\medskip

As shown in \S\ref{subsec:proof main}, Theorem \ref{t:main} follows
from Proposition \ref{prop:Rexists} and Proposition \ref{prop:Rtangent}.

\begin{bibdiv}
\begin{biblist}
\bib{AMU}{article}{
   author={Auerbach, H.},
   author={Mazur, S.},
   author={Ulam, S.},
   title={Sur une propri\'{e}t\'{e} caract\'{e}ristique de l'ellipso\"{\i}de},
   language={French},
   journal={Monatsh. Math. Phys.},
   volume={42},
   date={1935},
   number={1},
   pages={45--48},
   issn={1812-8076},
   review={\MR{1550413}},
}

\bib{Banach32}{book}{
   author = {Banach, Stefan},
   title = {Th{\'e}orie des op{\'e}rations lin{\'e}aires},
   series = {Monografie Matematyczne},
   volume = {1},
   year = {1932},
   publisher = {PWN, Warszawa},
   language = {French},
   reprint ={
      publisher={\'{E}ditions Jacques Gabay, Sceaux},
      date={1993},
      pages={iv+128},
      isbn={2-87647-148-5},
      review={\MR{1357166}},
   },
}

\bib{BM}{article}{
   author={Bartelme\ss , Nina},
   author={Matveev, Vladimir S.},
   title={Monochromatic metrics are generalized Berwald},
   journal={Differential Geom. Appl.},
   volume={58},
   date={2018},
   pages={264--271},
   issn={0926-2245},
   review={\MR{3777757}},
}

\bib{BHJM21}{article}{
   author={Bor, Gil},
   author={Hern\'{a}ndez Lamoneda, Luis},
   author={Jim\'{e}nez-Desantiago, Valent\'{\i}n},
   author={Montejano, Luis},
   title={On the isometric conjecture of Banach},
   journal={Geom. Topol.},
   volume={25},
   date={2021},
   number={5},
   pages={2621--2642},
   issn={1465-3060},
   review={\MR{4310896}},
}

\bib{BM21}{article}{
   author={Bracho, Javier},
   author={Montejano, Luis},
   title={On the complex Banach conjecture},
   journal={J. Convex Anal.},
   volume={28},
   date={2021},
   number={4},
   pages={1211--1222},
   issn={0944-6532},
   review={\MR{4374354}},
}

\bib{Burton}{article}{
   author={Burton, G. R.},
   title={Congruent sections of a convex body},
   journal={Pacific J. Math.},
   volume={81},
   date={1979},
   number={2},
   pages={303--316},
   issn={0030-8730},
   review={\MR{547601}},
}

\bib{Bus47}{article}{
   author={Busemann, Herbert},
   title={Intrinsic area},
   journal={Ann. of Math. (2)},
   volume={48},
   date={1947},
   pages={234--267},
   issn={0003-486X},
   review={\MR{20626}},
}

\bib{Bus49}{article}{
   author={Busemann, Herbert},
   title={A theorem on convex bodies of the Brunn-Minkowski type},
   journal={Proc. Nat. Acad. Sci. U.S.A.},
   volume={35},
   date={1949},
   pages={27--31},
   issn={0027-8424},
   review={\MR{28046}},
}

\bib{Clarke}{book}{
   author={Clarke, F. H.},
   title={Optimization and nonsmooth analysis},
   series={Classics in Applied Mathematics},
   volume={5},
   edition={2},
   publisher={SIAM, Philadelphia, PA},
   date={1990},
   pages={xii+308},
   isbn={0-89871-256-4},
   review={\MR{1058436}},
}

\bib{Dvo}{article}{
   author={Dvoretzky, Aryeh},
   title={A theorem on convex bodies and applications to Banach spaces},
   journal={Proc. Nat. Acad. Sci. U.S.A.},
   volume={45},
   date={1959},
   pages={223--226; erratum, 1554},
   issn={0027-8424},
   review={\MR{105652}},
}

\bib{Gro67}{article}{
   author={Gromov, M. L.},
   title={On a geometric hypothesis of Banach},
   language={Russian},
   journal={Izv. Akad. Nauk SSSR Ser. Mat.},
   volume={31},
   date={1967},
   pages={1105--1114},
   issn={0373-2436},
   review={\MR{0217566}},
   translation={
      journal={Mathematics of the USSR-Izvestiya},
      year={1967},
      volume={1},
      number={5},
      pages={1055--1064}
   },
}

\bib{Gruber}{book}{
   author={Gruber, Peter M.},
   title={Convex and discrete geometry},
   series={Grundlehren der mathematischen Wissenschaften [Fundamental
   Principles of Mathematical Sciences]},
   volume={336},
   publisher={Springer, Berlin},
   date={2007},
   pages={xiv+578},
   isbn={978-3-540-71132-2},
   review={\MR{2335496}},
}

\bib{GP}{book}{
   author={Guillemin, Victor},
   author={Pollack, Alan},
   title={Differential topology},
   publisher={Prentice-Hall, Inc., Englewood Cliffs, N.J.},
   date={1974},
   pages={xvi+222},
   review={\MR{0348781}},
}

\bib{mono}{article}{
   author={Ivanov, Sergei},
   title={Monochromatic Finsler surfaces and a local ellipsoid
   characterization},
   journal={Proc. Amer. Math. Soc.},
   volume={146},
   date={2018},
   number={4},
   pages={1741--1755},
   issn={0002-9939},
   review={\MR{3754357}},
}

\bib{Kak}{article}{
   author={Kakutani, S.},
   title={Some characterizations of Euclidean space},
   journal={Jpn. J. Math.},
   volume={16},
   date={1939},
   pages={93--97},
   issn={0075-3432},
   review={\MR{895}},
}

\bib{KH}{book}{
   author={Katok, Anatole},
   author={Hasselblatt, Boris},
   title={Introduction to the modern theory of dynamical systems},
   series={Encyclopedia of Mathematics and its Applications},
   volume={54},
   note={With a supplementary chapter by Katok and Leonardo Mendoza},
   publisher={Cambridge University Press, Cambridge},
   date={1995},
   pages={xviii+802},
   isbn={0-521-34187-6},
   review={\MR{1326374}},
}

\bib{Mil}{article}{
   author={Milman, V. D.},
   title={A new proof of A. Dvoretzky's theorem on cross-sections of convex
   bodies},
   language={Russian},
   journal={Funkcional. Anal. i Prilo\v{z}en.},
   volume={5},
   date={1971},
   number={4},
   pages={28--37},
   issn={0374-1990},
   review={\MR{0293374}},
}

\bib{Mon91}{article}{
   author={Montejano, Luis},
   title={Convex bodies with homothetic sections},
   journal={Bull. London Math. Soc.},
   volume={23},
   date={1991},
   number={4},
   pages={381--386},
   issn={0024-6093},
   review={\MR{1125866}},
}

\bib{Thompson}{book}{
   author={Thompson, A. C.},
   title={Minkowski geometry},
   series={Encyclopedia of Mathematics and its Applications},
   volume={63},
   publisher={Cambridge University Press, Cambridge},
   date={1996},
   pages={xvi+346},
   isbn={0-521-40472-X},
   review={\MR{1406315}},
}
\end{biblist}
\end{bibdiv}

\end{document}